\documentclass[twoside,11pt,reqno]{amsart}
\usepackage{amsmath,amsthm,amssymb,amscd,mathrsfs,amscd}
\usepackage{latexsym,amsfonts,exscale,enumerate,comment}
\usepackage{ytableau}
\usepackage{bbm}
\usepackage{graphics,verbatim}
\usepackage{mathrsfs}
\usepackage{todonotes}
\usepackage{hyperref}
\usepackage[all]{xy}
\usepackage{tikz}
\usepackage{tikz-cd}
\usepackage{tikzit}
\usetikzlibrary{shapes.geometric}
% TiKZ style file generated by TikZiT. You may edit this file manually,
% but some things (e.g. comments) may be overwritten. To be readable in
% TikZiT, the only non-comment lines must be of the form:
% \tikzstyle{NAME}=[PROPERTY LIST]

% Node styles

% Edge styles
\tikzstyle{Thick Blue Edge}=[-, thick, color=\clr]
\tikzstyle{Thick Blue Cup}=[-, thick, color=\clr, looseness=1.5]
\tikzstyle{Thick Blue Crossing}=[-, thick, color=\clr, looseness=1.25]

\tikzstyle{TB Edge}=[-, thick, color=\clr]
\tikzstyle{TB Cup}=[-, thick, color=\clr, looseness=1.5]
\tikzstyle{TB Crossing}=[-, thick, color=\clr, looseness=1.25]

%\usepackage[notcite, notref, color]{showkeys} %hide this to remove the names of labels

%\renewcommand*\showkeyslabelformat[1]{\fbox{\tiny#1}}
%\definecolor{labelkey}{rgb}{0.8, 0.0, 0.0}

\oddsidemargin .2in \evensidemargin .2in \textwidth 6in

%formatting section and subsection environments

%tikz additions\usepackage[notcite, notref, color]{showkeys} %hide this to remove the names of labels

\usepackage[left=1in, top=1in, right=1in, bottom=1in]{geometry}

\usetikzlibrary{decorations.markings}
\usetikzlibrary{decorations.pathreplacing}
\usetikzlibrary{arrows,shapes,positioning}
\tikzstyle directed=[postaction={decorate,decoration={markings, mark=at position #1 with {\arrow[scale=1]{>}}}}]
\tikzstyle rdirected=[postaction={decorate,decoration={markings, mark=at position #1 with {\arrow[scale=1]{<}}}}]
\usepackage[all]{xy}
\newcommand{\clr}{black}

% Definitions

\newcommand{\losemi}{{\otimes \kern -.78em \ltimes}}
\newcommand{\rosemi}{{\otimes \kern -.78em \rtimes}}

\newcommand{\Hom}{\ensuremath{\operatorname{Hom}}}
\newcommand{\End}{\ensuremath{\operatorname{End}}}

\newcommand{\Z}{\mathbb{Z}}
\newcommand{\C}{\mathbb{C}}

\newcommand{\gl}{\ensuremath{\mathfrak{gl}}}

\newcommand{\HH}{\operatorname{H}}

\newcommand{\WW}{\mathcal{W}}

\newcommand{\TT}{\mathtt{T}}

\renewcommand{\Gamma}{\varGamma}

%%%%%%%%%%%%%Web Bases for Specht Modules Commands

\renewcommand{\k}{\mathbbm{k}}

\newcommand{\Std}{\operatorname{Std}}
\newcommand{\Tab}{\operatorname{Tab}}
\newcommand{\rgWeb}{2\text{ -}\operatorname{Web_{gr}}}
\newcommand{\gWeb}{2\text{ -}\operatorname{Web_{g}}}
\newcommand{\CKMWeb}{2\text{ -}\operatorname{Web_{CKM}}}
\newcommand{\Dst}{\mathcal{D}^{st}}
\newcommand{\nest}{\operatorname{nest}}

%\colorlet{green}{ForestGreen}
\definecolor{green}{RGB}{34,139,34}

%Label settings:

%\def\Label#1{\label{#1}{\tt [#1]}}

%%
%% cleveref theorem environments
%%
%% Note that each environment must have its own counter, in order for cleveref to get names right.
%% But set all theorem-like counters equal to get them all numbered consecutively (within subsections).
%%
\newtheorem{theorem}{Theorem}[subsection]
\makeatletter\let\c@fact\c@theorem\makeatother

\newtheorem*{theorem*}{Theorem}
\makeatletter\let\c@fact\c@theorem\makeatother

%\newtheorem{fact}{Fact}[subsection]
%\makeatletter\let\c@fact\c@theorem\makeatother

\makeatletter\let\c@note\c@theorem\makeatother

\newtheorem{lemma}{Lemma}[subsection]
\makeatletter\let\c@lemma\c@theorem\makeatother

\makeatletter\let\c@alg\c@theorem\makeatother

\newtheorem{remark}{Remark}[subsection]
\makeatletter\let\c@remark\c@theorem\makeatother

\newtheorem{prop}{Proposition}[subsection]
\makeatletter\let\c@prop\c@theorem\makeatother

\makeatletter\let\c@conj\c@theorem\makeatother

\makeatletter\let\c@cor\c@theorem\makeatother

\newtheorem{definition}{Definition}[subsection]
\makeatletter\let\c@defn\c@theorem\makeatother

\theoremstyle{definition}
\newtheorem{example}{Example}[subsection]
\makeatletter\let\c@example\c@theorem\makeatother
%\numberwithin{Df}{subsection}
\numberwithin{equation}{subsection}

%
% cross references
%
\usepackage[capitalise]{cleveref}
%\newcommand{\newrefformat}[2]{}
% Cleveref definitions
\crefname{theorem}{Theorem}{Theorems}
\crefname{fact}{Fact}{Facts}
\crefname{note}{Note}{Notes}
\crefname{lemma}{Lemma}{Lemmas}
\crefname{alg}{Algorithm}{Algorithms}
\crefname{remark}{Remark}{Remarks}
\crefname{example}{Example}{Examples}
\crefname{prop}{Proposition}{Propositions}
\crefname{conj}{Conjecture}{Conjectures}
\crefname{cor}{Corollary}{Corollaries}
\crefname{definition}{Definition}{Definitions}
\crefname{equation}{\!\!}{\!\!} %Remove spacing around phantom equation name

%list environment for numbering statements as equations
\newcounter{listequation}

\makeatletter
\@namedef{subjclassname@2020}{\textup{2020} Mathematics Subject Classification}
\makeatother

\begin{document}
\title{Positivity and Web Bases for Specht Modules of Hecke Algebras}

\author{Samuel David Heard}
\address{Department of Mathematics \\
          University of Notre Dame \\
          Notre Dame, IN 46556}
\email{sheard3@nd.edu}
\author{Jonathan R. Kujawa}
\address{Department of Mathematics \\
          University of Oklahoma \\
          Norman, OK 73019}
\thanks{Research of the first author was partially supported by the McNair Scholars program and the Oklahoma Louis Stokes Alliances for Minority Participation (OK--LSAMP) program. Research of the second author was partially supported by a Simons Collaboration Grant.}\
\email{kujawa@ou.edu}
\date{\today}
\subjclass[2020]{Primary 20C08, 05E10; Secondary 20C30}
\keywords{}

\begin{abstract}  We show that the transition matrix from the standard basis to the web basis for a Specht module of the Hecke algebra is unitriangular and satisfies a strong positivity property whenever the Specht module is labeled by a partition with at most two parts.  This generalizes results of Russell--Tymoczko and Rhoades.
\end{abstract}

\maketitle

\section{Introduction}\label{S:Intro}

\subsection{Background}

The Specht module $S^{(n,n)}$ is a simple module for the symmetric group $S_{2n}$ defined over the rational numbers.  It admits (at least) two interesting combinatorial bases.  The first is the \emph{polytabloid basis} indexed by the standard tableaux of shape $(n,n)$.  Tableaux combinatorics play an important role not only in the representation theory of the symmetric group, but also symmetric functions, Schubert and Springer varieties, and more (e.g., see \cite{Fulton}).  The second is the \emph{web basis} given by non-crossing pairings of $\{1, \dotsc , 2n \}$.  Webs arise in the diagrammatic description of certain categories of representations of the Lie algebra $\gl_{n}(\C )$ and related algebras, are connected to various well-studied diagrammatic algebras (e.g., Temperley--Lieb and Brauer algebras), and appear in various guises in algebraic combinatorics, low-dimensional topology, cluster algebras, invariant theory, and categorical representation theory (e.g., see \cite{FP,FK,Kuperberg,PPR,RoseTubbenhauer} and related references).

In \cite{RT}, Russell and Tymoczko investigated the relationship between these two bases of $S^{(n,n)}$.  They showed that under a suitable ordering the change of basis matrix is unitriangular.  In this context the web basis is known to coincide with the Kazhdan--Lusztig basis and, as such, unitriangularity can be deduced from the work of Garsia--McLarnan \cite{GM} or Naruse \cite{Naruse}.   Russell--Tymoczko gave a more direct, combinatorial proof of this result.  They also sharpened it by giving additional conditions on when an entry can be nonzero.  Moreover, based on their results and the available data, Russell--Tymoczko conjectured that the entries of this matrix are always nonnegative.  Rhoades gave a proof of the Russell--Tymoczko positivity conjecture in \cite{Rhoades}.  This positivity property is known to be false for Specht modules in general and it remains an open question to determine when it holds.

\subsection{Main results} The purpose of the present paper is to generalize this work in two directions.  First, we replace the group algebra $\mathbb{Q} S_{d}$ with the Hecke algebra $\HH_{d}(q)$ defined over $\mathbb{Q}(q)$.  Second, we consider analogues of the polytabloid and web bases of the Specht module $S^{\lambda}$ for $\HH_{d}(q)$ where $\lambda = (a,b)$ is an arbitrary two-row partition of $d$.

In this setting the analogue of the polytabloid basis of $S^{\lambda}$ is naturally indexed by the set of standard tableaux of whose shape is the transpose of $\lambda$, $\Std (\lambda^{\TT})$.  The web basis is given by the set of non-crossing pairings of $\{1, \dotsc , a+b, 1', \dotsc , (a-b)' \}$, $\WW^{(a,b)}$.  We define a combinatorial bijection 
\[
\psi: \Std (\lambda) \to \WW^{(a,b)}
\] and partial order $\preceq$ on $\WW^{(a,b)}$ which generalize those given in \cite{RT}.  Our main results are \cref{T:uppertriangular,T:uppertriangular2,T:positivity}.  These unitriangularity and strong positivity results are summarized in the following theorem.

\begin{theorem*} Let $\lambda = (a,b)$ be a partition of $d$ with two parts.  For every $t \in \Std (\lambda^{\TT})$, 
\[
v_{t} =\psi (t^{\TT}) + \sum_{\substack{w \in \WW^{(a,b)}\\  w \prec  \psi (t^{\TT})}} c_{w,t} w,
\] where $c_{w,t} \in \Z_{\geq 0}[q]$. 
\end{theorem*}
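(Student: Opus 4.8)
This theorem packages together the unitriangularity statements \cref{T:uppertriangular,T:uppertriangular2} with the positivity statement \cref{T:positivity}, so the plan is to establish those. I would begin by pinning down a concrete model for $S^{\lambda}$ in which both bases are computable. The point is that a Specht module for a two-row partition factors through the Temperley--Lieb quotient $\HH_{d}(q) \twoheadrightarrow TL_{d}(q)$, so I would realize $S^{(a,b)}$ as the corresponding cell module, with $\{v_{t} : t \in \Std(\lambda^{\TT})\}$ the image of the usual standard (polytabloid) basis and the web basis $\{w : w \in \WW^{(a,b)}\}$ the diagram / link-state basis. The bijection $\psi$ should be the one sending $t^{\TT}$ to the non-crossing matching obtained by declaring $i$ an ``opener'' or ``closer'' according to whether it lies in the first or second row of $t^{\TT}$ and matching greedily, with the $a-b$ unmatched openers attached to the primed points; this generalizes the Russell--Tymoczko bijection, and $\preceq$ should record the resulting nesting data. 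Before anything else I would fix the normalization of the Temperley--Lieb generators $U_{i}$ so that their action on the diagram basis has all structure constants in $\Z_{\geq 0}[q]$ (for instance rescaling so that the closed-loop value is $q^{2}+1$ rather than $q+q^{-1}$); without the right normalization the desired positivity is simply false.

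Next I would run an induction on $d$ organized by the branching rule. Restriction of $S^{(a,b)}$ to $\HH_{d-1}(q)$ is $S^{(a-1,b)} \oplus S^{(a,b-1)}$ (discarding an invalid summand), and each $v_{t}$ is obtained from $v_{t'}$, with $t'$ the tableau $t$ with the box containing $d$ deleted, by applying an explicit element of $TL_{d}(q)$ — either the identity or the short product of $U_{i}$ needed to straighten — depending on where $d$ sits in $t$. On the web side, passing from $\psi((t')^{\TT})$ to $\psi(t^{\TT})$ is a local move: the point labelled by the deleted box either opens a new through-strand to a primed point or closes off the most recently opened arc. The heart of the argument is then a compatibility lemma: the branching element of $TL_{d}(q)$ sends $\psi((t')^{\TT})$ to $\psi(t^{\TT})$ plus a $\Z_{\geq 0}[q]$-combination of matchings strictly below $\psi(t^{\TT})$, and it sends every $w' \preceq \psi((t')^{\TT})$ into the $\Z_{\geq 0}[q]$-span of matchings $\preceq \psi(t^{\TT})$, with the inequality strict whenever $w' \prec \psi((t')^{\TT})$. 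Granting this, the inductive step is purely formal: expand $v_{t'} = \psi((t')^{\TT}) + \sum_{w' \prec \psi((t')^{\TT})} c_{w',t'} w'$ by the inductive hypothesis, apply the branching element, and collect terms.

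The main obstacle is this compatibility lemma, and within it the positivity half. The ``leading term is $\psi(t^{\TT})$, and lower stays lower'' part is the Hecke-algebra upgrade of the Russell--Tymoczko analysis and should follow from a careful but essentially routine case check on how the point $d$ sits relative to the arcs and through-strands of a matching (the opener/closer, or cup/cap, dichotomy) together with the explicit way $U_{i}$ acts on the diagram basis. The genuinely hard point — the one that forced a separate argument of Rhoades in the $q=1$ case — is that the straightening relations in $\HH_{d}(q)$ introduce coefficients involving $q-1$ a priori, and one must show that after rewriting everything in the diagram basis these conspire into honest elements of $\Z_{\geq 0}[q]$ with no cancellation. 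My preferred route is to guess a manifestly nonnegative combinatorial formula for $c_{w,t}$ — for example a generating function, weighted by powers of $q$, over saturated chains from $w$ to $\psi(t^{\TT})$ in $(\WW^{(a,b)}, \preceq)$, or over suitable lattice-path configurations — verify that it satisfies the recursion forced by the inductive step, and conclude equality by uniqueness, so that positivity comes for free. Failing a clean closed form, the fallback is to track the powers of $q$ through the induction directly, using the chosen normalization to certify that every intermediate coefficient stays in $\Z_{\geq 0}[q]$.
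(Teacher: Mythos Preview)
Your unitriangularity sketch via branching on $d$ is plausible but not how the paper proceeds: the paper inducts along the weak Bruhat order on $\Std(\lambda^{\TT})$, writing $v_t = v_{t_1}T_i$ for some $t_1 \leq_i t$, expanding $T_i = q\cdot\mathrm{id} + E_i$ (with $E_i$ the cup--cap), and checking case-by-case that $E_i$ raises nesting number by at most one and that every web produced stays $\preceq \psi(t^{\TT})$ (\cref{SS:partialorderonwebs}, \cref{L:PartialOrder}). Your branching organization would require making the inclusion of Specht modules explicit on both bases simultaneously, which is extra setup the paper avoids; the weak Bruhat induction is more direct precisely because the standard basis is \emph{defined} by $v_t = v_0 T_{i_1}\cdots T_{i_r}$ for a reduced word.

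The genuine gap is positivity. ``Guess a manifestly nonnegative formula for $c_{w,t}$'' is exactly what the paper flags as open (see the discussion of Im--Zhu and Hwang--Jang--Oh in \cref{S:Intro}, which only treats $q=1$, $a=b$), and ``track powers of $q$ through the induction'' is where the difficulty lives, not a way past it. Your renormalization does not help either: since $v_t = v_0 T_{i_1}\cdots T_{i_r}$, rewriting $T_i = q - q^{-1}U_i$ in terms of a generator $U_i$ with positive loop value $q^2+1$ reintroduces a minus sign in the very expression you must expand. The paper's mechanism, which you are missing, is to introduce a formal crossing satisfying the local skein relation \cref{E:crossingrelation}, so that $\varphi(v_t) = w_0\beta_{i_1}\cdots\beta_{i_r}$ is a \emph{single} web with crossings; resolving any crossing yields two terms with coefficients $q$ and $1$, and the only obstruction to positivity is the bubble relation \cref{E:bubblerelation} with value $-[2]_q$. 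The substance of \cref{L:positivity} is a topological argument: because the word is reduced, at every crossing an $R$-strand passes under an $L$-strand, which forces every bounded region of the planar projection to be a bigon, triangle, or square with controlled adjacencies (conditions (B1)--(B3)); one then resolves crossings in a carefully chosen order, using the Reidemeister~II move of \cref{L:RII} (coefficient $q$) on bigons, so that no bubble is ever created. Without this device, or an actual closed combinatorial formula, your positivity argument does not close.
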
  We refer the reader to \cref{SS:ExampleII} and \cref{SS:ExampleIII} for examples when $\lambda = (3,3)$ and $\lambda = (4,2)$, respectively. 
Setting $q=1$ and $a=b$ recovers the results of \cite{Rhoades,RT}.

\subsection{Additional questions} Work of  Im--Zhu \cite{IZ} and Hwang--Jang--Oh \cite{HJO} gives combinatorial interpretations for the entries of this change of basis matrix when $q=1$ and $a=b$.  In particular, they determine precisely when an entry can be nonzero and thereby verify the conjecture of Russell--Tymoczko  that the conditions they gave for non-vanishing were both necessary and sufficient.  It would be interesting to refine and generalize these results in order to describe the matrix entries as polynomials in $q$ for all two-row partitions.  In particular, in the examples we have computed each entry of the matrix is palindromic in that it can be written in the form $q^{t}f$ where $f \in \Z_{\geq 0}[q,q^{-1}]$ and $f(q)=f(q^{-1})$. A combinatorial model for these polynomials could help answer if this behavior always occurs.  Related is the work of McDonough--Pallikaros which establishes unitriangularity between the standard basis and the Kazhdan--Lusztig basis for Specht modules of $\HH_{d}(q)$ and that the change of basis entries can be described using Kazhdan--Lusztig polynomials \cite{MP}.

In a different direction, there are versions of the polytabloid and web/skein bases when the partition has three equal rows,  $\lambda = (n,n,n)$, or is of a flag shape, $\lambda = (n,n, 1^{k})$.  See, for example, \cite{PPS,Rhoades2,RT2}.  The Tubbenhauer--Vaz--Wedrich diagrams we use here are defined for $U_{q}(\mathfrak{gl}_{n})$ for all $n \geq 1$ and, hence, can be used for partitions with any number of parts. Our methods could be used to investigate these and other shapes.

In a third direction, Khovanov--Mazorchuk--Stroppel gave a categorification of Specht modules via subcategories of graded parabolic category $\mathcal{O}$ for $\mathfrak{sl}_{n}(\C )$ where the action of the generators of $\HH_{d}(q)$ are given by translation functors \cite{KMS}.  In this categorification the isomorphism classes of the indecomposable projective-injective modules are in bijection with the Kazhdan--Lusztig basis of the Specht module.   We expect the results of this paper can be interpreted as graded multiplicities in this categorification.  

\subsection*{Acknowledgements}  The first author thanks Sophia Bolin-Dills, Rodney Bates, Regennia Johnson, and Bushra Asif for their support during the time of this project.  Both authors would like to thank Jieru Zhu for helpful conversations.  We also thank the referees for their helpful comments.

\section{Hecke Algebras and Specht Modules}\label{S:Prelim}

\subsection{Preliminaries}\label{SS:prelims}  Throughout the base field is $\k = \mathbb{Q}(q)$, where $q$ is an indeterminant.   Given a nonnegative integer $k$ and a $t \in \k$ which is not a root of unity, let 
\[
[k]_{t} = \frac{t^{k}-t^{-k}}{t-t^{-1}} = t^{k-1}+t^{k-3}+\dots + t^{-(k-3)}+t^{-(k-1)} \in \k.
\]
Set 
\[
[k]_{t}! = [k]_{t}[k-1]_{t}\dotsb [2]_{t}[1]_{t}.
\]

A partition $\lambda = (\lambda_{1}, \lambda_{2}, \dotsc )$ of $d$ is a weakly decreasing sequence of nonnegative integers which sum to $d$.  Write $\lambda \vdash d$ to indicate that $\lambda$ is a partition of $d$.  The nonzero entries of $\lambda$ are called the parts of $\lambda$.  let $\Lambda_{+}(2)$ be the set of all partitions with two or fewer parts. For $d \geq 0$, let $\Lambda_{+}(2,d)$ be the set of all partitions of $d$ with two or fewer parts.

Given a partition $\lambda = (\lambda_{1}, \dotsc , \lambda_{n}) \vdash d$, let $S_{\lambda} \cong S_{\lambda_{1}} \times \dotsb \times S_{\lambda_{n}}$ denote the corresponding Young subgroup of the symmetric group on $d$ letters, $S_{d}$.   Let $s_{1}, \dotsc , s_{d-1} \in S_{d}$ denote the simple transpositions and let $\ell: S_{d} \to \Z_{\geq 0}$ be the corresponding length function.

Given a partition, $\lambda$, there is the corresponding Young diagram consisting of $\lambda_{i}$ boxes in the $i$th row, left justified.  We use the so-called English notation when drawing Young diagrams.  A tableau of shape $\lambda$ is a filling of the boxes of the Young diagram for $\lambda$ with the numbers $1, \dotsc , d$.  A standard tableau of shape $\lambda$ is a tableau of shape $\lambda$ which satisfies the property that the entries strictly increase as one goes left-to-right along rows and top-to-bottom along columns.  Given a partition, $\lambda \vdash d$, let $t_{0}=t^{\lambda}$ be the distinguished standard tableau given by filling the boxes of $\lambda$ with $1, \dotsc , d$ in order by filling in each row from left to right, starting with the top row.  Let $\Tab (\lambda)$ be the set of all tableaux of shape $\lambda$ and let $\Std (\lambda)$ be the set of all standard tableaux of shape $\lambda$.

Given a partition $\lambda$ let $\lambda^{\TT}$ denote the transpose partion.  Likewise, given a tableau $t$ let $t^{\TT}$ denote the transpose tableau. There is a right action of $S_{d}$ on the set of tableaux of shape $\lambda$ by acting on the entries of a tableau.  We write $t.\sigma$ for this action.  The transpose map is $S_{d}$-equivariant.  For each partition $\lambda \vdash d$, fix $\sigma_{\lambda} \in S_{d}$ once and for all by the equality 
\begin{equation}\label{E:sigmadef}
(t^{\lambda}.\sigma_{\lambda})^{\TT } = t^{\lambda^{\TT}}.
\end{equation}

\subsection{The Hecke algebra of type A}\label{SS:HeckeAlgebra}

\begin{definition}\label{R:Heckeisom}  Given a nonnegative integer $d$, let $\HH_{d}(q)$ denote the associative, unital $\k$-algebra generated by $T_{1}, \dots , T_{d-1}$ subject to the relations (for all admissible $1 \leq i,j \leq d-1$):
\begin{itemize}
\item $T_{i}T_{j}=T_{j}T_{i}$ for $|i-j| > 1$;
\item $T_{i+1}T_{i}T_{i+1} = T_{i}T_{i+1}T_{i}$;
\item $ (T_{i}  - q)(T_{i}+q^{-1})=0 $. %T_{i}^{2} - (q-q^{-1})T_{i} -1 = 0 =
\end{itemize}
\end{definition}

For a reduced expression $w = s_{i_{1}}\dotsb s_{i_{r}}$, set $T_{w}= T_{i_{1}}\dotsb T_{i_{r}}$.   This is independent of the choice of reduced expression and the set $\left\{T_{w} \mid w \in S_{d} \right\}$ is a $\k$-basis for  $\HH_{d}(q)$.

\subsection{Specht modules}\label{SS:SpechtModules}  The representation theory of $\HH_{d}(q)$ is  well-known.  We give a brief summary in order to establish conventions.   We generally follow \cite{DipperJames,Mathas}. Given a partition $\lambda \vdash d$, define the following elements of $\HH_{d}(q)$:

\[
x_{\lambda} = \sum_{w \in S_{\lambda}} q^{\ell (w)}T_{w}, \hspace{.5in} y_{\lambda}  = \sum_{w \in S_{\lambda}} (-q)^{-\ell(w)}T_{w}, 
\] and 
\[
z_{\lambda}= x_{\lambda}T_{\sigma_{\lambda}}y_{\lambda^{\TT}},
\] where $\sigma_{\lambda}$ is as in \cref{E:sigmadef}.

The \emph{Specht module} associated to $\lambda \vdash d$ is the right $\HH_{d}(q)$-module generated by $z_{\lambda}$,
\[
S^{\lambda} = z_{\lambda}\HH_{d}(q).
\] For example, if $\lambda = (d)$, then $x_{\lambda}= \sum_{w \in S_{d}} q^{\ell (w)}T_{w}$, $y_{\lambda^{\TT}} = 1$, $\sigma_{\lambda}= e_{S_{d}}$, and $S^{\lambda}$ is the one-dimensional $\HH_{d}(q)$-module spanned by $z_{\lambda}=x_{\lambda}$  which satisfies $z_{\lambda}T_{i}=qz_{\lambda}$ for all $i=1, \dotsc , d-1$. If $\lambda = (1, 1, \dotsc , 1)$, then  $x_{\lambda}= 1$, $y_{\lambda^{\TT}} = \sum_{w \in S_{d}} (-q)^{-\ell(w)} T_{w}$, $\sigma_{\lambda}= e_{S_{d}}$, and $S^{\lambda}$ is the one-dimensional $\HH_{d}(q)$-module spanned by $z_{\lambda}=y_{\lambda^{\TT}}$  which satisfies $z_{\lambda}T_{i}=-q^{-1}z_{\lambda}$ for all $i=1, \dotsc , d-1$.

It is known that 
\[
\left\{ S^{\lambda}\mid \lambda \vdash d \right\}
\] is a complete, irredundant set of simple $\HH_{d}(q)$-modules.  The following lemma is well-known; e.g., see \cite[Corollary 4.2]{DipperJames} or \cite[Exercise 3.18(iv)]{Mathas}.

\begin{lemma}\label{L:KeyLemma}  Given a partition $\lambda \vdash d$, $z_{\lambda}y_{\lambda^{\TT}}$ is a nonzero scalar multiple of $z_{\lambda}$.  Moreover, if  $v \in S^{\lambda}$ has the property that $vy_{\lambda^{\TT}} = cv$ for a nonzero scalar $c \in \k$, then $v \in \k z_{\lambda}$.
\end{lemma}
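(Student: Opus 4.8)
The plan is to handle the two assertions separately: the first is a direct computation inside $\HH_{d}(q)$, and the second reduces to the classical fact that $x_{\lambda}\HH_{d}(q)y_{\lambda^{\TT}}$ is one-dimensional.

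For the first assertion I would begin by recording the straightening identity $y_{\mu}T_{i}=-q^{-1}y_{\mu}$ whenever $s_{i}\in S_{\mu}$. This comes from pairing each $w\in S_{\mu}$ with $ws_{i}$ and using $T_{w}T_{i}=T_{ws_{i}}$ when $\ell(ws_{i})>\ell(w)$ and $T_{w}T_{i}=T_{ws_{i}}+(q-q^{-1})T_{w}$ when $\ell(ws_{i})<\ell(w)$ (the latter being a consequence of the quadratic relation $T_{i}^{2}=(q-q^{-1})T_{i}+1$), then collecting the terms. Iterating along a reduced word gives $y_{\mu}T_{w}=(-q)^{-\ell(w)}y_{\mu}$ for all $w\in S_{\mu}$, so that
\[
y_{\lambda^{\TT}}^{2}=\gamma_{\lambda^{\TT}}\,y_{\lambda^{\TT}}, \qquad \gamma_{\lambda^{\TT}}:=\sum_{w\in S_{\lambda^{\TT}}}q^{-2\ell(w)}\in\k .
\]
Since $\gamma_{\lambda^{\TT}}$ is a Laurent polynomial in $q$ with positive integer coefficients it is nonzero in $\k$, and because $z_{\lambda}=x_{\lambda}T_{\sigma_{\lambda}}y_{\lambda^{\TT}}$ we get $z_{\lambda}y_{\lambda^{\TT}}=x_{\lambda}T_{\sigma_{\lambda}}y_{\lambda^{\TT}}^{2}=\gamma_{\lambda^{\TT}}z_{\lambda}$, proving the first assertion (and, incidentally, identifying the scalar).

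For the second assertion, suppose $v\in S^{\lambda}$ satisfies $vy_{\lambda^{\TT}}=cv$ with $c\in\k^{\times}$. From $z_{\lambda}=x_{\lambda}T_{\sigma_{\lambda}}y_{\lambda^{\TT}}$ we have $S^{\lambda}=z_{\lambda}\HH_{d}(q)\subseteq x_{\lambda}\HH_{d}(q)$, so $v=x_{\lambda}h$ for some $h\in\HH_{d}(q)$; then $v=c^{-1}(vy_{\lambda^{\TT}})=c^{-1}x_{\lambda}hy_{\lambda^{\TT}}\in x_{\lambda}\HH_{d}(q)y_{\lambda^{\TT}}$. Thus it suffices to prove that $x_{\lambda}\HH_{d}(q)y_{\lambda^{\TT}}=\k z_{\lambda}$ (the space is nonzero because $z_{\lambda}\ne 0$, as $S^{\lambda}$ is simple and nonzero). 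I would obtain this by expanding in the basis $\{T_{w}:w\in S_{d}\}$ and grouping by the double cosets $S_{\lambda}\backslash S_{d}/S_{\lambda^{\TT}}$: using the straightening relations $x_{\lambda}T_{w}=q^{\ell(w)}x_{\lambda}$ for $w\in S_{\lambda}$ and $T_{w}y_{\lambda^{\TT}}=(-q)^{-\ell(w)}y_{\lambda^{\TT}}$ for $w\in S_{\lambda^{\TT}}$, together with the length-additivity properties of minimal-length double-coset representatives, one shows that $x_{\lambda}\HH_{d}(q)y_{\lambda^{\TT}}$ is spanned by the elements $x_{\lambda}T_{d}y_{\lambda^{\TT}}$ with $d$ minimal in its double coset, and that $x_{\lambda}T_{d}y_{\lambda^{\TT}}=0$ unless the parabolic subgroup $d^{-1}S_{\lambda}d\cap S_{\lambda^{\TT}}$ is trivial. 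Encoding the double coset by the matrix with $(i,j)$ entry $\lvert d^{-1}(\text{row }i)d\cap(\text{column }j)\rvert$, this triviality says exactly that the matrix is a $0$--$1$ matrix with row sums $\lambda$ and column sums $\lambda^{\TT}$; there is a unique such matrix, the one recording the Young diagram of $\lambda$, and its representative may be taken to be $\sigma_{\lambda}$ by \cref{E:sigmadef}. Hence $x_{\lambda}\HH_{d}(q)y_{\lambda^{\TT}}=\k\, x_{\lambda}T_{\sigma_{\lambda}}y_{\lambda^{\TT}}=\k z_{\lambda}$, so $v\in\k z_{\lambda}$.

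The routine part is the first assertion; the main obstacle is the bookkeeping in the last paragraph. In the Hecke setting one cannot simply write $T_{dsd^{-1}}=T_{d}T_{s}T_{d^{-1}}$, so the vanishing argument genuinely needs the careful length estimates for distinguished double-coset representatives and the product decomposition $d^{-1}S_{\lambda}d\cap S_{\lambda^{\TT}}\cong\prod_{i,j}S_{a_{ij}}$ of the stabilizer. These are standard facts about Young subgroups and their Hecke-algebra analogues, which is why one typically invokes them from \cite{DipperJames} or \cite{Mathas} rather than reproving them here; the genuinely combinatorial input, uniqueness of the $0$--$1$ matrix with prescribed conjugate row and column sums, is elementary.
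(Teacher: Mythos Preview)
Your proof is correct and follows the standard Dipper--James/Mathas approach. Note, however, that the paper does not actually prove this lemma: it states it as well-known and cites \cite[Corollary 4.2]{DipperJames} and \cite[Exercise 3.18(iv)]{Mathas}. So there is no paper proof to compare against; what you have written is essentially a sketch of the argument one would find by following those references. Your identification of the scalar $\gamma_{\lambda^{\TT}}=\sum_{w\in S_{\lambda^{\TT}}}q^{-2\ell(w)}$ is correct, and your reduction of the second assertion to the one-dimensionality of $x_{\lambda}\HH_{d}(q)y_{\lambda^{\TT}}$ via the double-coset analysis is the standard route. The one place to be careful is the vanishing step: you implicitly use that for a distinguished double-coset representative $d$ with $d^{-1}S_{\lambda}d\cap S_{\lambda^{\TT}}$ nontrivial, there is a simple reflection $s_{i}$ in this intersection with $ds_{i}d^{-1}$ again a simple reflection and $T_{d}T_{i}=T_{ds_{i}}=T_{ds_{i}d^{-1}}T_{d}$; this is Kilmoyer's lemma (or its Hecke analogue), and is exactly the ``careful length estimate'' you flag at the end.
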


\subsection{The weak Bruhat order}\label{SS:WeakBruhat}  Recall that the \emph{(right) weak Bruhat order}, $\leq$, on $S_{d}$ is defined by $u \leq v$ if and only if there is a reduced expression $v=s_{i_{1}}\dotsb s_{i_{t}}$ and a $0 \leq k \leq t$ such that $u=s_{i_{1}}\dotsb s_{i_{k}}$.  Write $u \leq_{j} v $ if there is a reduced expression $v=s_{i_{1}}\dotsb s_{i_{t-1}} s_{i_{t}}$, $u=s_{i_{1}}\dotsb s_{i_{t-1}}$,  and $s_{i_{t}}=s_{j}$.  Then $u \leq v$ if and only if $u=v$ or there are sequences $u_{1}, \dotsc , u_{r}$ and $1 \leq j_{1}, \dotsc , j_{r} \leq d-1$ such that 
\[
u \leq_{j_{1}} u_{1} \leq_{j_{2}} \dotsb \leq_{j_{r}} u_{r} = v.
\]

Given a partition $\lambda \vdash d$, define 
\[
\Dst_{\lambda} =\left\{\tau \in S_{d} \mid \tau \leq \sigma_{\lambda} \right\},
\] where $\sigma_{\lambda}$ was defined in \eqref{E:sigmadef}.  By \cite[Lemma 1.5]{DipperJames}, the function 
\begin{gather*}
\eta: \Dst_{\lambda}  \to \Std (\lambda)\\
\tau \mapsto t^{\lambda}.\tau
\end{gather*}
is a bijection.  Define a partial order on $\Std (\lambda)$ by declaring $\eta(\tau_{1}) \leq \eta(\tau_{2})$ if and only if $\tau_{1} \leq \tau_{2}$.  We refer to this as the weak Bruhat order on $\Std (\lambda)$. 

For example, with respect to the weak Bruhat order, the identity element $e_{S_{d}} \in S_{d}$ is the unique minimal element of $\Dst_{\lambda}$.  The corresponding minimal element of $\Std (\lambda)$ is  $t_{0}=t^{\lambda}$.  At the other extreme, $\sigma_{\lambda}$ is the unique maximal element of $\Dst_{\lambda}$ and $t^{\lambda}.\sigma_{\lambda} = (t^{\lambda^{\TT}})^{\TT} $ is the corresponding maximal element of $\Std (\lambda)$.  By definition, the Hasse diagram for this partial order on $\Std (\lambda)$ is the labeled directed graph with an edge labeled by $i$ from $t_{1}$ to $t_{2}$ whenever $t_{1} \leq_{i} t_{2}$.

\begin{remark}\label{R:weakBruhatOrder}  It is well-known that for $\tau_{1},\tau_{2} \in S_{d}$, $\tau_{1} \leq_{i} \tau_{2}$ if and only if $\tau_{2}=\tau_{1}s_{i}$ and $\tau_{2}$ has one more inversion than $\tau_{1}$ (e.g., see \cite[Section 3.1]{BB}). Via the bijection $\eta$ this translates into the statement that for $t_{1}, t_{2} \in \Std (\lambda)$, $t_{1} \leq_{i}  t_{2}$ if and only if $t_{2}=t_{1}.s_{i}$ and $i$ occurs before $i+1$ when reading the entries of $t_{1}$ left-to-right, top-to-bottom.
\end{remark}

We record some basic properties of the weak Bruhat order on $\Std ((n+r,n)^{\TT})$ which will be useful later.

\begin{lemma}\label{L:WeakBruhatProperties} Let $\lambda = (n+r,n)$.  Let $t, u, u_{1}, t_{p} \in \Std (\lambda^{\TT})$ for $p=1,2,3$.
\begin{enumerate}
\item If $t \leq_{i} t_{1}$ and $t \leq_{j} t_{2}$ with $i \neq j$, then $|i-j| > 1$.
\item If $t \leq_{i} t_{1}$ and $t \leq_{j} t_{2}$ with $i \neq j$, then there is a $t_{3}$ such that $t_{1} \leq_{j} t_{3}$ and $t_{1} \leq_{i} t_{3}$. 
\item If $t_{1} \leq_{i} t_{2} \leq_{j} t_{3}$, then $i \neq j$.
\item If $t \leq u$, $t \leq_{i} t_{1}$, and $u \leq_{i} u_{1}$, then $t_{1} \leq u_{1}$
\end{enumerate}

\end{lemma}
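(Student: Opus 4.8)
The plan is to transport everything to the symmetric group side via the bijection $\eta\colon \Dst_{\lambda^{\TT}} \to \Std(\lambda^{\TT})$ and the criterion in \cref{R:weakBruhatOrder}, then argue purely combinatorially with tableaux of the two-column-type shape $\lambda^{\TT}$, where $\lambda = (n+r,n)$, so $\lambda^{\TT}$ has columns of heights $2$ (the first $n$ columns) and $1$ (the last $r$ columns). Throughout, recall from \cref{R:weakBruhatOrder} that $t \leq_i t'$ exactly when $t' = t.s_i$ and in $t$ the entry $i$ appears strictly before $i+1$ in the left-to-right, top-to-bottom reading word; swapping $i \leftrightarrow i+1$ keeps standardness precisely in the configurations we must enumerate.

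First I would handle (1) by contradiction: suppose $t \leq_i t_1$ and $t \leq_{i+1} t_2$. Then in $t$ the entry $i$ precedes $i+1$, and $i+1$ precedes $i+2$, and both swaps $s_i$ and $s_{i+1}$ individually preserve standardness. In a shape whose columns have height at most $2$, the only way $s_i$ can fail to be achievable is a column-violation, and I would check that the presence of the three consecutive values $i, i+1, i+2$ forces at least one of the two swaps to break a column relation whenever $i$ and $i+1$, or $i+1$ and $i+2$, lie in the same column; a short case analysis on the relative column/row positions of these three boxes gives the contradiction. For (2), given $t \leq_i t_1$ and $t \leq_j t_2$ with $|i-j|>1$ (using (1)), the swaps $s_i$ and $s_j$ act on disjoint pairs of boxes, so $t_3 := t.s_is_j = t.s_js_i$ is again standard and $t_1 \leq_j t_3$, $t_1 \leq_i t_3$ follow immediately from the reading-word criterion since disjoint swaps do not interfere. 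Part (3) is the observation that $i=j$ would force $t_3 = t_2.s_i = t_1$, contradicting that the second step strictly increases the number of inversions (equivalently strictly increases in the weak order); this is immediate from \cref{R:weakBruhatOrder}. Part (4) is the standard "lifting" property of the weak Bruhat order: on the $S_d$ side, $t \leq u$ means there is a chain of right-multiplications $t = \eta(\tau)$, $u = \eta(\upsilon)$ with $\tau \leq \upsilon$, and $\tau \leq_i \cdot$, $\upsilon \leq_i \cdot$ means $\ell(\tau s_i) > \ell(\tau)$ and $\ell(\upsilon s_i) > \ell(\upsilon)$; I would cite that in any Coxeter group $\tau \leq \upsilon$ together with $\tau s_i > \tau$ and $\upsilon s_i > \upsilon$ implies $\tau s_i \leq \upsilon s_i$ (a consequence of the subword property / the exchange condition, e.g.\ \cite{BB}), then transport back through $\eta$.

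The main obstacle is part (1): it is the only statement genuinely special to the shape $\lambda^{\TT}$ (it is false for general shapes — e.g.\ in a shape with a column of height $3$ one can have $t \leq_i t_1$ and $t \leq_{i+1} t_2$ simultaneously), so the proof must actually use that every column has height $\leq 2$, and the bookkeeping of which of the boxes holding $i, i+1, i+2$ share a row or column — and in which of the two column-heights — is where the real work lies. Once (1) is in hand, (2) is essentially free, and (3)–(4) are formal facts about weak Bruhat order that I would simply quote.
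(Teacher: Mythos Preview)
Your approach to parts (1)--(3) is essentially correct and matches the paper's (which cites \cite[Lemma~3.3]{RT} for (1) and observes that (2) and (3) are immediate from (1) and the definition of weak order). One notational slip: $\lambda^{\TT}$ is a two-\emph{column} shape (its \emph{rows} have length at most $2$, not its columns height at most $2$). Once corrected, the cleanest version of your argument for (1) is that $t \leq_i t.s_i$ in $\Std(\lambda^{\TT})$ forces $i$ to lie in column~2 and $i+1$ in column~1; hence $t \leq_i t_1$ and $t \leq_{i+1} t_2$ would place $i+1$ in both columns simultaneously.

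Part (4), however, has a genuine gap. The assertion ``in any Coxeter group, $\tau \leq \upsilon$ in right weak order with $\tau s_i > \tau$ and $\upsilon s_i > \upsilon$ implies $\tau s_i \leq \upsilon s_i$'' is \emph{false}. In $S_3$ take $\tau = e$, $\upsilon = s_1$, and $i = 2$: then $e \leq s_1$, $\ell(s_2) > \ell(e)$, $\ell(s_1 s_2) > \ell(s_1)$, but $s_2 \not\leq s_1 s_2$ in right weak order, since the unique reduced word for $s_1 s_2$ has only $e$ and $s_1$ as prefixes. The lifting property in \cite{BB} that you have in mind is for the \emph{strong} Bruhat order and does not transfer to weak order. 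Part (4) genuinely depends on the shape via (1) and (2): the correct proof (as in \cite[Lemma~5.6]{RT}, which the paper cites) inducts on the length of a chain $t \leq_{k_1} v_1 \leq_{k_2} \cdots \leq_{k_m} u$; at the first step, if $k_1 = i$ then $t_1 = v_1 \leq u \leq u_1$, while if $k_1 \neq i$ then (1) gives $|k_1 - i| > 1$ and (2) produces $v_1' = v_1.s_i$ with $t_1 \leq_{k_1} v_1'$ and $v_1 \leq_i v_1'$, whence induction applied to $v_1 \leq u$ yields $v_1' \leq u_1$. So you should not expect to dispatch (4) by citation; it is where parts (1) and (2) earn their keep.
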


\begin{proof} For the first claim, the proof of the same statement for $\lambda=(n,n)$ given in \cite[Lemma 3.3]{RT} applies verbatim once one accounts for the transpose map.  The second claim is immediate from the first claim and the definition of the weak Bruhat order.  The third claim follows directly from the definition of the weak Bruhat order.  Given the previous claims the fourth claim is proven exactly as in \cite[Lemma 5.6]{RT}
\end{proof}

\subsection{The standard basis for Specht modules}\label{SS:polytabloidbasis}

By \cite[Theorem 5.6]{DipperJames} the set 
\[
\left\{z_{\lambda}T_{\tau}\mid \tau \in \Dst_{\lambda^{\TT}} \right\}
\] is a basis for $S^{\lambda}$.  Relabeling this basis using the bijection $\eta$ gives a basis for $S^{\lambda}$ indexed by the standard tableaux of shape $\lambda^{\TT}$:
\begin{equation}\label{E:standardbasis}
\left\{v_{t} := z_{\lambda}T_{\eta^{-1}(t)} \mid t\in \Std (\lambda^{\TT}) \right\}.
\end{equation}  This is the so-called \emph{standard basis} of $S^{\lambda}$.  It could also be called the \emph{polytabloid basis} for $S^{\lambda}$ since at $q=1$ this basis specializes to the polytabloid basis of the corresponding Specht module for the symmetric group $S_{d}$.

The weak Bruhat order can be transported to the standard basis by declaring $v_{t_{1}} \leq v_{t_{2}}$ if and only if $t_{1} \leq t_{2}$ in $\Std (\lambda^{\TT})$. For example,  
\begin{equation}\label{E:v0def}
v_{0}:=v_{t_{0}}=v_{t^{\lambda^{\TT}}}=z_{\lambda}
\end{equation}
is the unique minimal basis element of $S^{\lambda}$ and $v_{(t^{\lambda})^{\TT}}=v_{t^{\lambda^{\TT}}.\sigma_{\lambda^{\TT}}} = z_{\lambda}T_{\sigma_{\lambda^{\TT}}}$ is the unique maximal basis element of $S^{\lambda}$. Note, the appearance of the tranpose of $\lambda$ and the right weak Bruhat order are both due to the fact we choose to work with right $\HH_{d}(q)$-modules.

\begin{example}  Let $\lambda = (3,3)$.  Then $s_{\lambda^{\TT }}=s_{2}s_{4}s_{3}=s_{4}s_{2}s_{3}$. The Hasse diagrams for $\Dst_{\lambda^{\TT}}$, $\Std (\lambda^{\TT})$, and the standard basis for $S^{(3,3)}$, respectively, are:

\begin{equation*}
\begin{tikzpicture}[baseline=1ex,scale=.5, color=\clr, yscale=-1]
	\begin{pgfonlayer}{nodelayer}
		\node [style=none] (0) at (0, -4) {};
		\node [style=none] (1) at (2, 1) {};
		\node [style=none] (2) at (-2, 1) {};
		\node [style=none] (3) at (0, 2) {};
		\node [style=none] (4) at (0, -5) {};
		\node [style=none] (5) at (-2, -1) {};
		\node [style=none] (6) at (2, -1) {};
		\node [style=none] (7) at (0, -2) {};
		\node [style=none] (8) at (0, 3) {$e_{S_{d}}$};
		\node [style=none] (9) at (-2, 0) {$s_{4}$};
		\node [style=none] (10) at (2, 0) {$s_{2}$};
		\node [style=none] (11) at (0, -3) {$s_{2}s_{4}$};
		\node [style=none] (12) at (0, -6) {$s_{\lambda^{\TT}}$};
		\node [style=none] (14) at (-1.25, 1.75) {\tiny{$4$}};
		\node [style=none] (15) at (1.25, 1.75) {\tiny{$2$}};
		\node [style=none] (16) at (-1.25, -1.75) {\tiny{$2$}};
		\node [style=none] (17) at (1.25, -1.75) {\tiny{$4$}};
		\node [style=none] (18) at (0.5, -4.5) {\tiny{$3$}};
	\end{pgfonlayer}
	\begin{pgfonlayer}{edgelayer}
		\draw [thick] (4.center) to (0.center);
		\draw [thick] (2.center) to (3.center);
		\draw [thick] (1.center) to (3.center);
		\draw [thick] (5.center) to (7.center);
		\draw [thick] (7.center) to (6.center);
	\end{pgfonlayer}
\end{tikzpicture} \; ,
\hspace{.5in}
\begin{tikzpicture}[baseline=1ex,scale=.5, color=\clr, yscale=-1]
	\begin{pgfonlayer}{nodelayer}
		\node [style=none] (0) at (0, -4) {};
		\node [style=none] (1) at (2, 1) {};
		\node [style=none] (2) at (-2, 1) {};
		\node [style=none] (3) at (0, 2) {};
		\node [style=none] (4) at (0, -5) {};
		\node [style=none] (5) at (-2, -1) {};
		\node [style=none] (6) at (2, -1) {};
		\node [style=none] (7) at (0, -2) {};
		\node [style=none] (8) at (-0.25, 3) {\ytableausetup{centertableaux,smalltableaux}
\begin{ytableau}
1 & 2\\
3 & 4 \\
5& 6
\end{ytableau}};
		\node [style=none] (9) at (-2.25, 0) {\ytableausetup{centertableaux,smalltableaux}
\begin{ytableau}
1 & 2\\
3 & 5 \\
4& 6
\end{ytableau}};
		\node [style=none] (10) at (1.75, 0) {\ytableausetup{centertableaux,smalltableaux}
\begin{ytableau}
1 & 3\\
2 & 4 \\
5 & 6
\end{ytableau}};
		\node [style=none] (11) at (-0.2, -3) {\ytableausetup{centertableaux,smalltableaux}
\begin{ytableau}
1 & 3\\
2 & 5 \\
4 & 6
\end{ytableau}};
		\node [style=none] (12) at (-0.2, -6) {\ytableausetup{centertableaux,smalltableaux}
\begin{ytableau}
1 & 4\\
2 & 5 \\
3& 6
\end{ytableau}};
		\node [style=none] (14) at (-1.25, 1.75) {\tiny{$4$}};
		\node [style=none] (15) at (1.25, 1.75) {\tiny{$2$}};
		\node [style=none] (16) at (-1.25, -1.75) {\tiny{$2$}};
		\node [style=none] (17) at (1.25, -1.75) {\tiny{$4$}};
		\node [style=none] (18) at (0.5, -4.5) {\tiny{$3$}};
	\end{pgfonlayer}
	\begin{pgfonlayer}{edgelayer}
		\draw [thick] (4.center) to (0.center);
		\draw [thick] (2.center) to (3.center);
		\draw [thick] (1.center) to (3.center);
		\draw [thick] (5.center) to (7.center);
		\draw [thick] (7.center) to (6.center);
	\end{pgfonlayer}
\end{tikzpicture} \; ,
\hspace{0.5in}
\text{and}
\hspace{0.5in}
\begin{tikzpicture}[baseline=1ex,scale=.5, color=\clr, yscale=-1 ]
	\begin{pgfonlayer}{nodelayer}
		\node [style=none] (0) at (0, -4) {};
		\node [style=none] (1) at (2, 1) {};
		\node [style=none] (2) at (-2, 1) {};
		\node [style=none] (3) at (0, 2) {};
		\node [style=none] (4) at (0, -5) {};
		\node [style=none] (5) at (-2, -1) {};
		\node [style=none] (6) at (2, -1) {};
		\node [style=none] (7) at (0, -2) {};
		\node [style=none] (8) at (0, 3) {$z_{\lambda}$};
		\node [style=none] (9) at (-2, 0) {$z_{\lambda}T_{s_{4}}$};
		\node [style=none] (10) at (2, 0) {$z_{\lambda}T_{s_{2}}$};
		\node [style=none] (11) at (0, -3) {$z_{\lambda}T_{s_{2}s_{4}}$};
		\node [style=none] (12) at (0, -6) {$z_{\lambda}T_{s_{\lambda^{\TT }}}$};
		\node [style=none] (14) at (-1.25, 1.75) {\tiny{$4$}};
		\node [style=none] (15) at (1.25, 1.75) {\tiny{$2$}};
		\node [style=none] (16) at (-1.25, -1.75) {\tiny{$2$}};
		\node [style=none] (17) at (1.25, -1.75) {\tiny{$4$}};
		\node [style=none] (18) at (0.5, -4.5) {\tiny{$3$}};
	\end{pgfonlayer}
	\begin{pgfonlayer}{edgelayer}
		\draw [thick] (4.center) to (0.center);
		\draw [thick] (2.center) to (3.center);
		\draw [thick] (1.center) to (3.center);
		\draw [thick] (5.center) to (7.center);
		\draw [thick] (7.center) to (6.center);
	\end{pgfonlayer}
\end{tikzpicture} \; .
\end{equation*}

\end{example}

\section{Webs}\label{S:Webs}

\subsection{Webs}\label{SS:Webs} We next introduce a diagrammatic description of Specht modules labeled by two-row partitions using web diagrams. An $(n+r, n)$-web diagram is a certain non-crossing pairing of the elements of the set 
\[
\left\{1, 2, \dots , 2n+r, 1', 2', \dots , r' \right\}.
\]
This description is essentially known to experts (e.g., see \cite{FK,JohnsonStewart}).  However, we could not find one which exactly matches our preferred conventions and so opt to give a detailed treatment.

When discussing webs we will usually view them as graphs with vertices labeled by the set $\left\{1, 2, \dots , 2n+r, 1', 2', \dots , r' \right\}$ and where the pairing of the vertex labeled by $a$ and the vertex labeled by $b$ will be represented by a line connecting these vertices. We will call that line the \emph{edge} incident to vertex $a$ and vertex $b$.

\begin{definition}\label{D:Webs}    Let $n,r$ be nonnegative integers. An \emph{$(n+r,n)$-web} is a graph contained in the strip $\mathbb{R} \times [0,1]$ which satisfies: 
\begin{itemize}
\item vertices on the line $\mathbb{R} \times \{0 \}$ labeled by $1, \dots , 2n+r$ in that order when read left-to-right, vertices on the line $\mathbb{R} \times \{1 \}$ labeled by $1', \dots , r'$ in that order when read left-to-right;
\item each vertex is connected by an edge to one and only one other vertex;
\item vertices labeled by $1', \dots , r' $ are only connected to vertices labeled by $1, \dots , 2n+r $;
\item the graph is planar.
\end{itemize}

The non-crossing pairing associated to such a graph is the obvious one.  Graphs which are related by a planar isotopy give the same pairings and are considered equal as webs.
\end{definition}

For brevity's sake, given a web and a $k \in \left\{1, \dotsc , 2n+r, 1', \dotsc , r' \right\}$, we may say vertex $k$ to mean the vertex labeled by $k$ in the given web.  We will frequently leave the vertices labels implicit when it cannot cause confusion.  On the right-hand-side of \cref{E:etaexample} one can find an example of a $(10,6)$-web and in \cref{SS:ExampleI} are various examples of $(3,3)$-webs.  The following is an $(n+r,n)$-web which will play an important role in what follows:
\begin{equation}\label{E:w0definition}
w_{0} \; \; = \; \;
\begin{tikzpicture}[baseline=1ex,scale=.5, color=\clr ]
	\begin{pgfonlayer}{nodelayer}
		\node [style=none] (0) at (0, 0) {\tiny{$\cdots$}};
		\node [style=none] (1) at (1, 0) {};
		\node [style=none] (2) at (2.5, 0) {};
		\node [style=none] (3) at (3, 0) {};
		\node [style=none] (4) at (4.5, 0) {};
		\node [style=none] (5) at (5, 0) {};
		\node [style=none] (6) at (5.75, 0) {};
		\node [style=none] (7) at (6.5, 0) {\tiny{$\cdots$}};
		\node [style=none] (8) at (8, 0) {};
		\node [style=none] (9) at (7.25, 0) {};
		\node [style=none] (11) at (5, 3) {};
		\node [style=none] (12) at (5.75, 3) {};
		\node [style=none] (13) at (6.5, 3) {\tiny{$\cdots$}};
		\node [style=none] (14) at (8, 3) {};
		\node [style=none] (15) at (7.25, 3) {};
		\node [style=none] (16) at (-4.5, 0) {};
		\node [style=none] (17) at (-3, 0) {};
		\node [style=none] (18) at (-2.5, 0) {};
		\node [style=none] (19) at (-1, 0) {};
%
%		\node [style=none] (50) at (0, 0) {\tiny{$\cdots$}};
%		\node [style=none] (51) at (1, -0.5) {\tiny{$$}};
%		\node [style=none] (52) at (2.5, -0.5) {\tiny{$$}};
%		\node [style=none] (53) at (3, -0.5) {\tiny{$$}};
%		\node [style=none] (54) at (4.5, -0.5) {\tiny{$2n$}};
%		\node [style=none] (55) at (5, -0.5) {\tiny{$1$}};
%		\node [style=none] (56) at (5.75, -0.5) {\tiny{$1$}};
%		\node [style=none] (57) at (6.5, 0) {\tiny{$\cdots$}};
%		\node [style=none] (58) at (7.75, -0.5) {\tiny{$1$}};
%		\node [style=none] (59) at (7, -0.5) {\tiny{$1$}};
%		\node [style=none] (61) at (5, 3) {};
%		\node [style=none] (62) at (5.75, 3) {};
%		\node [style=none] (63) at (6.5, 3) {\tiny{$\cdots$}};
%		\node [style=none] (64) at (7.75, 3) {};
%		\node [style=none] (65) at (7, 3) {};
%		\node [style=none] (66) at (-4.5, -0.5) {\tiny{$1$}};
%		\node [style=none] (67) at (-3, -0.5) {\tiny{$1$}};
%		\node [style=none] (68) at (-2.5, -0.5) {\tiny{$1$}};
%		\node [style=none] (69) at (-1, -0.5) {\tiny{$1$}};
	\end{pgfonlayer}
	\begin{pgfonlayer}{edgelayer}
		\draw [thick, looseness=1.5] [out=90,in=90] (16.center) to (17.center);
		\draw [thick, looseness=1.5] [out=90,in=90] (18.center) to (19.center);
		\draw [thick, looseness=1.5] [out=90,in=90] (1.center) to (2.center);
		\draw [thick, looseness=1.5] [out=90,in=90] (3.center) to (4.center);
		\draw [thick, ] (5.center) to (11.center);
		\draw [thick, ] (6.center) to (12.center);
		\draw [thick, ] (9.center) to (15.center);
		\draw [thick, ] (8.center) to (14.center);
	\end{pgfonlayer}
\end{tikzpicture} \; \; . 
\end{equation}

If an edge in an $(n+r,n)$-web connects two vertices from among $\{1, \dots , 2n+r \}$ we call it an \emph{arc}.  If the edge connects a vertex from among $\{1, \dots , 2n+r \}$ to a vertex from among $\{1', \dots , r' \}$, then we call it a \emph{through-string}.
For an arc, the \emph{L(eft)-vertex} (resp.,  \emph{R(ight)-vertex}) of the arc is the leftmost (resp., rightmost) vertex adjacent to the arc. For a through-string, the \emph{L-vertex} is the vertex with label from the set $\{1, \dotsc , 2n+r \}$ and the \emph{R-vertex} is the one with label from the set $\{1', \dotsc , r' \}$.  Given a pair $a,b \in \left\{1, 2, \dots , 2n+r, 1', 2', \dots , r' \right\}$ which are connected by an edge in some web we write $\langle a,b \rangle$ for that edge and usually assume that $a$ is the L-vertex and $b$ is the R-vertex.

\subsection{Webs and standard tableaux}\label{SS:WebsAndTableaux}

Given $n,r \geq 0$, let $\WW^{(n+r,r)}$ be the set of $(n+r,n)$-webs. Let $\lambda = (n+r,n)$, viewed as a partition of $2n+r$. Define 
\[
\phi: \WW^{(n+r,n)} \to \Std(\lambda)
\] by setting $\phi(w)$ to be the standard tableau of shape $\lambda$ obtained by filling the boxes in the first row (resp., second row) of $\lambda$ with the labels of the L-vertices (resp., R-vertices of arcs) of $w$ read left-to-right.  By definition $\phi (w)$ is a row-standard tableau of shape $\lambda$.  That $\phi (w)$ is standard and that $\phi$ is a bijection will be verified in \cref{L:WebTableauxBijection}.

On the other hand, define
\[
\psi: \Std (\lambda) \to \WW^{(n+r,n)}
\] by the following algorithm.  By construction $\psi (t)$ will be a pairing of the elements of the set $\left\{1, \dotsc , 2n+r, 1', \dotsc , r' \right\}$. We will show in \cref{L:WebTableauxBijection} that $\psi (t)$ is an $(n+r,n)$-web and that $\psi$ is a bijection. 

Given a standard tableau of shape $\lambda$, $t$, let $\psi (t)$ be the graph constructed recursively as follows:
\begin{itemize}
\item [Step 0:] Mark vertices on the line $\mathbb{R} \times \{0 \}$ labeled by $1, \dots , 2n+r$ in that order when read left-to-right and vertices on the line $\mathbb{R} \times \{1 \}$ labeled by $1', \dots , r'$ in that order when read left-to-right;
\item[Step 1:]  Consider the entries in the second row of $t$ and consider those which label a vertex of the graph under construction which are not yet adjacent to an edge.  If there is no such entry, go to Step~3.  Otherwise, let $b$ be the entry of the leftmost such box of $t$.
\item [Step 2:] Consider the set of vertices in the graph under construction which are to the left of the vertex $b$ and which have labels which appear in the first row of $t$.  Connect vertex $b$ to the vertex from this collection which is closest to vertex $b$ in the graph and not yet adjacent to an edge. Return to Step~1. 
\item [Step 3:] At this step there are $r$ vertices labeled by $1, \dots , 2n+r $ which are not yet adjacent to an edge.  Connect these vertices to the vertices labeled by $1', \dots , r' $ using non-crossing edges.
\end{itemize}

\begin{example}  Say $n=2$, $r=1$ and $t$ is the following tableau of shape $(3,2)$:
\[
t= \ytableausetup{centertableaux}
\begin{ytableau}
1 & 2 & 3 \\
4 & 5  
\end{ytableau}
\]  Applying the algorithm yields the following $(3,2)$-web:
\[
\begin{tikzpicture}[baseline=1ex,scale=.5, color=\clr ]
	\begin{pgfonlayer}{nodelayer}
		\node [style=none] (0) at (-2, 0) {};
		\node [style=none] (1) at (-1, 0) {};
		\node [style=none] (2) at (0, 0) {};
		\node [style=none] (3) at (1, 0) {};
		\node [style=none] (4) at (2, 0) {};
		\node [style=none] (5) at (-2, -0.5) {\tiny{$1$}};
		\node [style=none] (6) at (-1, -0.5) {\tiny{$2$}};
		\node [style=none] (7) at (0, -0.5) {\tiny{$3$}};
		\node [style=none] (8) at (1, -0.5) {\tiny{$4$}};
		\node [style=none] (9) at (2, -0.5) {\tiny{$5$}};
		\node [style=none] (10) at (2, 2.5) {};
		\node [style=none] (11) at (2, 3) {\tiny{$1'$}};
	\end{pgfonlayer}
	\begin{pgfonlayer}{edgelayer}
		\draw [thick, looseness=1] [out=90,in=90] (1.center) to (4.center);
		\draw [thick, looseness=1] [out=90,in=90] (2.center) to (3.center);
		\draw [thick, looseness=0.75] [out=90,in=270] (0.center) to (10.center);
	\end{pgfonlayer}
\end{tikzpicture}.
\] Let us explain how the algorithm yields this web. In Step~0 the labeled vertices are placed as indicated. The first time Step~1 is applied, vertex $4$ is the leftmost entry of the second row of $\lambda$ which is not yet adjacent to an edge and so is selected.  In Step~2  we connect vertex $4$ to vertex $3$ since from among the vertices labeled by entries from the first row of $\lambda$ which are to the left of $4$ and not yet adjacent to an edge (namely, vertices $1$, $2$, and $3$), this is the nearest.  Having completed Step~2, we return to Step~1. In this iteration, in Step~1 we select vertex $5$ since it is the leftmost entry of the second row of $\lambda$ which is not yet adjacent to an edge. In Step~2  we connect vertex $5$ to vertex $2$ since from among the vertices labeled by entries from the first row of $\lambda$ which are to the left of $5$ are not yet adjacent to an edge (namely, vertices $1$ and $2$), this is the nearest.  Having completed Step~2, we return to Step~1. In this iteration, since all entries of the second row of $\lambda$ are now adjacent to an edge we go to Step~3.  There is one remaining open $L$-vertex, namely vertex $1$, and we connect it to vertex $1'$ by a through-string.  The algorithm stops and the result is the indicated web.
\end{example}

Two remarks are in order.  First, by construction the entries in the first (resp., second) row of $t$ will be the labels for the L-vertices (resp., R-vertices of arcs) of $\psi(t)$.   Second, say we are applying Step~2 of the algorithm for the $k$th time and $b_{k}$ is the selected entry of the second row of $t$.  There are $k$ vertex labels in the first row of $t$ which are weakly to the left of the box containing $b_{k}$.  Since $t$ is standard, all of these entries are smaller than $b_{k}$.  That is, in the graph under construction there are at least $k$ eventual L-vertices to the left of vertex $b_{k}$ in the graph under construction.  That is, when we apply Step~2 to the box containing $b_{k}$, we will be in the $k$th iteration of Step~2, have so far drawn $k-1$ arcs, and there remains at least one open L-vertex to the left of vertex $b_{k}$ which is not yet adjacent to an edge.  That is, for each box in the second row of $t$, the set of L-vertices considered when Step~2 is applied to the entry of that box will be nonempty and, hence, there will be a nearest one. In short, the algorithm never fails in Step~2 and, hence, never fails.
\begin{lemma}\label{L:WebTableauxBijection}  The images of the maps $\phi$ and $\psi$ lie in the given sets and the two maps are mutual inverses.  In particular, 
\[
|\WW^{(n+r+r)}| = |\Std (n+r,n)|.
\]
\end{lemma}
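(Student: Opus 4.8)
The plan is to prove four things, in order: (a) $\psi(t)$ is an $(n+r,n)$-web for every $t\in\Std(\lambda)$; (b) $\phi(w)$ is a standard tableau of shape $\lambda$ for every web $w$; (c) $\phi\circ\psi=\mathrm{id}_{\Std(\lambda)}$; and (d) $\psi\circ\phi=\mathrm{id}_{\WW^{(n+r,n)}}$. Items (a) and (b) are precisely the assertion that the two maps land in the claimed sets, (c) and (d) give that they are mutually inverse, and the cardinality statement follows at once.

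\emph{Proof of (a).} The two remarks preceding the statement already show the algorithm never stalls, so $\psi(t)$ is a pairing of $\{1,\dotsc,2n+r,1',\dotsc,r'\}$ in which, by Step~3, no primed vertex meets a primed vertex. It remains to check the pairing is non-crossing, hence realizable by a planar diagram in the strip. Let $b_1<\dotsb<b_n$ be the second-row entries of $t$, listed in the order the algorithm processes them, and let $\langle a_i,b_i\rangle$ be the arc drawn when $b_i$ is handled, so $a_i$ is the largest first-row entry of $t$ with $a_i<b_i$ not used at an earlier step. Fix $i<j$, so $b_i<b_j$. If $a_j<a_i$, then $a_j<a_i<b_i<b_j$, so $\langle a_i,b_i\rangle$ is nested inside $\langle a_j,b_j\rangle$. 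If instead $a_i<a_j$ and we had $a_j<b_i$, then at step $i$ the vertex $a_j$ would have been an available first-row entry below $b_i$ exceeding $a_i$, contradicting the choice of $a_i$; hence $b_i<a_j$ and the two arcs are disjoint. The same extremality of $a_i$ shows that no first-row entry which remains unmatched after all arcs are drawn lies strictly between $a_i$ and $b_i$, so the through-strings of Step~3, joining the remaining L-vertices to $1',\dotsc,r'$ in left-to-right order, cross neither the arcs nor one another. Thus $\psi(t)$ is a web.

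\emph{Proof of (b) and (c).} In any web $w$ each unprimed vertex is the L-vertex of an arc, the R-vertex of an arc, or the L-vertex of a through-string, and there are $n$, $n$, and $r$ of these respectively; hence $\phi(w)$ is a filling of $\lambda=(n+r,n)$ by $1,\dotsc,2n+r$ using each value once, with strictly increasing rows because the vertices on $\mathbb{R}\times\{0\}$ are labeled in left-to-right order. For the column condition, let $\ell_1<\dotsb<\ell_{n+r}$ be the L-vertices and $\rho_1<\dotsb<\rho_n$ the arc-R-vertices of $w$; the $j$ arcs with R-vertices $\rho_1,\dotsc,\rho_j$ contribute $j$ distinct L-vertices, each less than its own R-vertex and hence less than $\rho_j$, so $\ell_j<\rho_j$. (Non-crossing is not needed here.) Thus $\phi(w)\in\Std(\lambda)$, proving (b). For (c), by construction the L-vertices of $\psi(t)$ are precisely the first-row entries of $t$ and the R-vertices of its arcs are precisely the second-row entries; since $\phi$ reads each in increasing order, $\phi(\psi(t))=t$.

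\emph{Proof of (d).} Fix a web $w$ and put $t=\phi(w)$, which is standard by (b); we show the algorithm applied to $t$ returns $w$. The second-row entries of $t$ are the arc-R-vertices of $w$, written $b_1<\dotsb<b_n$ in processing order, and we induct on $i$ to show the arc drawn when $b_i$ is processed is the arc $\langle a^{\ast}_i,b_i\rangle$ of $w$ with R-vertex $b_i$. Granting this for all $j<i$: the vertex $a^{\ast}_i$ is a first-row entry of $t$ (an L-vertex of $w$) with $a^{\ast}_i<b_i$, and it is unused at steps $1,\dotsc,i-1$ since an L-vertex lies on a unique edge; and if some L-vertex $d$ of $w$ with $a^{\ast}_i<d<b_i$ were still unused, then $d$ is not the L-vertex of a through-string (the L-vertex of a through-string cannot lie strictly between the two vertices of an arc of $w$, by planarity), while if $d$ were an arc-L-vertex, its arc $\langle d,e\rangle$ would by non-crossing with $\langle a^{\ast}_i,b_i\rangle$ satisfy $e<b_i$, so $e=b_j$ for some $j<i$ and the inductive hypothesis would already have used $d$ at step $j$ --- impossible. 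Hence the algorithm joins $b_i$ to $a^{\ast}_i$, completing the induction. The arc sets of $\psi(t)$ and $w$ therefore coincide, so do their sets of through-string L-vertices, and both diagrams join these to $1',\dotsc,r'$ in left-to-right order (Step~3 for $\psi(t)$, planarity for $w$); so $\psi(t)=w$. The main obstacle is this last item: one must show the greedy ``nearest unused L-vertex'' rule applied to $\phi(w)$ rebuilds $w$, and the only real leverage is the non-crossing property of $w$, exploited above through the facts that a through-string cannot pass under an arc and that two arcs cannot interleave; item (a) is a simpler instance of the same mechanism, and (b)--(c) are bookkeeping.
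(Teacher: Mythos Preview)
Your proof is correct and follows essentially the same approach as the paper's. The only differences are presentational: for (a) you argue directly that any two arcs are nested or disjoint while the paper argues by contradiction that no crossing can occur, and for (d) you run an explicit forward induction on the R-vertices while the paper takes the leftmost R-vertex where $\psi(\phi(w))$ and $w$ disagree and derives a contradiction; in both places the underlying mechanism---planarity of $w$ forces the greedy nearest-unused-L-vertex rule to rebuild $w$---is identical.
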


\begin{proof} We first explain why the tableau $\phi(w)$ is standard for any $(n+r,n)$-web $w$.  First, by definition the entries of $\phi(w)$ are increasing along rows.  Now consider the columns of $\phi(w)$. Say $a_{k}$ and $b_{k}$ are the entries in the $k$th column of $\phi(w)$ and are in the first and second rows, respectively.  From the definition of $\phi$ it follows that $b_{k}$ is the $k$th R-vertex of an arc as one reads left-to-right in $w$.  That is, there are at least $k$  L-vertices to the left of the vertex $b_{k}$. Thus, the first $k$ entries of the first row of $\phi(w)$ are filled with labels which are smaller than $b_{k}$. In particular, $a_{k}$ is smaller than $b_{k}$.  Since this is true for every column, $\phi(w)$ is standard.

We next explain why $\psi(t)$ is an $(n+r,n)$-web for any standard tableau $t$ of shape $\lambda = (n+r,n)$.  By construction, $\psi(t)$ provides a pairing of the vertices $\{1, \dots , 2n+r, 1', \dots , r' \}$ and the vertices $1', \dots , r'$ are only connected to vertices from among $\{1, \dots , 2n+r \}$.

It remains to confirm that the graph is planar.  By construction any two through-strings are non-crossing, so the only worry is the crossing of two arcs, or of an arc and a through-string.  First, consider two crossing arcs.  Let $\langle a_{k},b_{k} \rangle$ and $\langle a_{\ell}, b_{\ell} \rangle$ be two arcs which cross, where $a_{k}$ and $a_{\ell}$ are the L-vertices, $b_{k}$ and $b_{\ell}$ are the $R$-vertices, and assume without loss of generality that $b_{k} < b_{\ell}$.  Since the arcs cross it must be that $a_{k} < a_{\ell} <b_{k}$.  Since  $b_{k} < b_{\ell}$, we know the arc $\langle a_{k}, b_{k} \rangle$ was drawn before the arc $\langle a_{\ell}, b_{\ell}\rangle$.  At the time the arc  $\langle a_{k}, b_{k} \rangle$  was drawn, both $a_{k}$ and $a_{\ell}$ were eventual L-vertices to the left of $b_{k}$ which were not yet adjacent to an edge.  But if this were the case, then the algorithm would have chosen to connect $b_{k}$ to $a_{\ell}$ instead of $a_{k}$, since it was available and closer to $b_{k}$.  In short, the algorithm never creates a pair of crossing arcs.  Entirely similar reasoning explains why  the algorithm never creates the crossing of a through-string and an arc.  Therefore, $\psi (t)$ is always a $(n+r,n)$-web, as claimed.

We now claim that $\phi(\psi (t))=t$ for any standard Young tableau $t$.  The first row of $\phi(\psi (t))$ is filled with the labels of the L-vertices of $\psi (t)$.  However, by construction, the labels of the  L-vertices of $\psi (t)$ are precisely the entries of the first row of $t$.  Thus the entries of the first row of $\phi(\psi (t))$ and $t$ are the same. Likewise, the second rows of $\phi(\psi (t))$ and $t$ coincide.  But two standard tableaux of the same shape with the same entries in each row are necessarily equal.

Finally, we claim $\psi (\phi (w)) = w$ for any $(n+r,n)$-web $w$.  By construction the sets of L-vertices of $\psi (\phi (w))$ and $w$ coincide, as do the sets of R-vertices.  If $\psi (\phi (w))$ and $w$ were to differ in their arcs this would mean there is an R-vertex which is connected to different L-vertices in $\psi (\phi (w))$ and $w$.  Let $b$ the leftmost such R-vertex.  Say $a_{1}$ is the L-vertex connected to $b$ in $\psi (\phi (w))$ and say $a_{2}$ is the $L$-vertex connected to $b$ in $w$. We will assume $a_{1} < a_{2}$ and leave the other case to the reader.   If the edge adjacent to $a_{2}$ in  $\psi (\phi (w))$ is a through-string, then this through-string would cross the arc $\langle a_{1}, b \rangle$.  Therefore $a_{2}$ is the L-vertex of an arc in $\psi (\phi (w))$, say $\langle a_{2}, c \rangle$.  Since by assumption $\langle a_{1}, b \rangle$ is an arc of $\psi (\phi (w))$ we know $c \neq b$.   If $c > b$, then the arc $\langle a_{2}, c \rangle$ would cross the arc $\langle a_{1}, b \rangle$.  If $c < b$, then $c$ would be an $R$-vertex to the left of $b$ which is connected to two different $L$-vertices in $\psi (\phi (w))$ and $w$, contradicting our choice of $b$.   Thus, $\psi (\phi (w))$ and $w$ have the same set of arcs.  But if two webs have the same set of arcs, then it must be that their through-strings also agree and, hence,  $\psi (\phi (w))=w$.
\end{proof}

\subsection{Nesting number}\label{SS:nesting} The following generalizes the notion of nesting number defined in \cite{RT}.

Let $w$ be an $(n+r,n)$-web.  Given an arc $e=\langle a, b \rangle$ in $w$, set
\begin{equation*}
\nest (e) = \left(  \# \text{ arcs $\langle a', b' \rangle$ in $w$ with $a' < a < b < b'$} \right)  + \left(  \# \text{ through-strings  $\langle a', b' \rangle$ in $w$ with $a' < a$}\right).
\end{equation*}
  The \emph{nesting number} of the web $w$ is given by
\[
\nest (w) = \sum_{\text{$e$ is an arc of $w$}} \nest (e).
\]

If $e=\langle a, b \rangle$ is an arc in $w$, set
\begin{equation*}
\nest' (e) = \left(  \# \text{ arcs $\langle a', b' \rangle$ in $w$ with $a < a' < b' < b$} \right),
\end{equation*} and if $e =\langle a,b \rangle$ is a through-string in $w$, set
\begin{equation*}
\nest' (e) = \left(  \# \text{ arcs  $\langle a', b' \rangle$ in $w$ with $a < a' < b'$}\right).
\end{equation*} Then, obviously,
\[
\nest (w) = \sum_{\text{$e$ is an edge of $w$}} \nest' (e).
\]

For example, the $(9,3)$-web given on the right-hand side of \cref{E:etaexample} has nesting number $3$.

\subsection{The web module}\label{SS:Webmodule}  Given nonnegative integers $n$ and $r$, let $W^{(n+r,n)}$ be the $\k$-vector space with basis given by the set of $(n+r,n)$-webs, $\WW^{(n+r,n)}$. Define an action of $\HH_{d}(q)$ on $W^{(n+r,n)}$ as follows:
\begin{itemize}
\item If $i$ and $i+1$ are both through-strings, then $wT_{i}=qw$.
\item If  $i$ and $i+1$ are not both through-strings, then define the action of $T_{i}$ on $w$ by considering the following linear combination of graphs obtained via concatenation on the bottom of $w$: 

\begin{equation*}
wT_{i} \; \; =\; \; 
q\; \begin{tikzpicture}[baseline=0ex,scale=.65, color=\clr ]
	\begin{pgfonlayer}{nodelayer}
		\node [style=none] (0) at (-5, 1) {};
		\node [style=none] (1) at (0, 1) {};
		\node [style=none] (2) at (-3.5, -1) {};
		\node [style=none] (3) at (-5, 0) {};
		\node [style=none] (4) at (0, 1) {};
		\node [style=none] (5) at (-5, -1) {};
		\node [style=none] (6) at (-2.84, 0) {};
		\node [style=none] (7) at (-1, 1) {};
		\node [style=none] (8) at (0, 0) {};
		\node [style=none] (9) at (-2.5, 0.5) {$w$};
		\node [style=none] (10) at (0, -1.5) {\tiny{$2n+r$}};
		\node [style=none] (11) at (-0.75, -0.5) {\tiny{\dots}};
		\node [style=none] (12) at (-2.84, -1.475) {\tiny{$i$}};
		\node [style=none] (13) at (-2.16, 0) {};
		\node [style=none] (14) at (-5, -1.5) {\tiny{$1$}};
		\node [style=none] (15) at (-4.25, -0.5) {\tiny{\dots}};
		\node [style=none] (16) at (-2.16, -1.5) {\tiny{$i+1$}};
		\node [style=none] (17) at (-1.5, -1) {};
		\node [style=none] (18) at (0, -1)  {};
		\node [style=none] (19) at (-1.5, 0) {};
		\node [style=none] (20) at (-3.5, 0) {};
		\node [style=none] (21) at (-2.84, -1) {};
		\node [style=none] (22) at (-2.16, -1) {};
	\end{pgfonlayer}
	\begin{pgfonlayer}{edgelayer}
		\draw [style=TB Edge] (4.center) to (8.center);
		\draw [style=TB Edge] (8.center) to (3.center);
		\draw [style=TB Edge] (4.center) to (0.center);
		\draw [style=TB Edge] (0.center) to (3.center);
		\draw [style=TB Edge]  (6.center) to (21.center);
		\draw [style=TB Edge] (5.center) to (3.center);
		\draw [style=TB Edge]  (13.center) to (22.center);
		\draw [style=TB Edge] (19.center) to (17.center);
		\draw [style=TB Edge] (8.center) to (18.center);
		\draw [style=TB Edge] (20.center) to (2.center);
	\end{pgfonlayer}
\end{tikzpicture}
\; \; + \; \; \; \;
\begin{tikzpicture}[baseline=0ex,scale=.65, color=\clr ]
	\begin{pgfonlayer}{nodelayer}
		\node [style=none] (0) at (-5, 1) {};
		\node [style=none] (1) at (0, 1) {};
		\node [style=none] (2) at (-3.5, -1) {};
		\node [style=none] (3) at (-5, 0) {};
		\node [style=none] (4) at (0, 1) {};
		\node [style=none] (5) at (-5, -1) {};
		\node [style=none] (6) at (-2.84, 0) {};
		\node [style=none] (7) at (-1, 1) {};
		\node [style=none] (8) at (0, 0) {};
		\node [style=none] (9) at (-2.5, 0.5) {$w$};
		\node [style=none] (10) at (0, -1.5) {\tiny{$2n+r$}};
		\node [style=none] (11) at (-0.75, -0.5) {\tiny{\dots}};
		\node [style=none] (12) at (-2.84, -1.475) {\tiny{$i$}};
		\node [style=none] (13) at (-2.16, 0) {};
		\node [style=none] (14) at (-5, -1.5) {\tiny{$1$}};
		\node [style=none] (15) at (-4.25, -0.5) {\tiny{\dots}};
		\node [style=none] (16) at (-2.16, -1.5) {\tiny{$i+1$}};
		\node [style=none] (17) at (-1.5, -1) {};
		\node [style=none] (18) at (0, -1)  {};
		\node [style=none] (19) at (-1.5, 0) {};
		\node [style=none] (20) at (-3.5, 0) {};
		\node [style=none] (21) at (-2.84, -1) {};
		\node [style=none] (22) at (-2.16, -1) {};
	\end{pgfonlayer}
	\begin{pgfonlayer}{edgelayer}
		\draw [style=TB Edge] (4.center) to (8.center);
		\draw [style=TB Edge] (8.center) to (3.center);
		\draw [style=TB Edge] (4.center) to (0.center);
		\draw [style=TB Edge] (0.center) to (3.center);
		\draw [style=TB Cup] [out=270, in=270] (6.center) to (13.center);
		\draw [style=TB Edge] (5.center) to (3.center);
		\draw [style=TB Cup] [out=90, in=90] (21.center) to (22.center);
		\draw [style=TB Edge] (19.center) to (17.center);
		\draw [style=TB Edge] (8.center) to (18.center);
		\draw [style=TB Edge] (20.center) to (2.center);
	\end{pgfonlayer}
\end{tikzpicture}.
\end{equation*}
\end{itemize}
After concatenation there may be closed connected components which do not involve boundary vertices. These should be deleted and for each one which is deleted the remaining diagram should be scaled by $-[2]_{q}=-(q+q^{-1})$. After all such components are deleted the result will be a linear combination of $(n+r,n)$-webs.

Checking case-by-case verifies that the defining relations of $\HH_{d}(q)$ are satisfied and, hence, the following result holds.

\begin{lemma}\label{L:Webmodule}  The action given above makes $W^{(n+r,n)}$ into a well-defined right $\HH_{2n+r}(q)$-module.
\end{lemma}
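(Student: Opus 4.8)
The plan is to rewrite the prescribed action of $T_{i}$ in the form $w \mapsto qw + wE_{i}$, where $E_{i}$ is the operator defined by attaching the cup--cap diagram at positions $i,i+1$ below a web (deleting any closed loop created, with the attendant scalar $-[2]_{q}$), subject to the convention that $wE_{i} := 0$ in the one situation in which this output would fail to be a web, namely when $i$ and $i+1$ are both through-strings of $w$; this is exactly the content of the first displayed rule. With this reformulation in hand, it then suffices to (i) check that $E_{i}$ is a well-defined $\k$-linear endomorphism of $W^{(n+r,n)}$ and (ii) verify the three families of relations of \cref{R:Heckeisom}, translated into relations among the $E_{i}$.

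For (i): attaching the cup--cap at positions $i,i+1$ below $w$ removes the two edges of $w$ incident to the vertices $i$ and $i+1$, splices the two vertices they were attached to together by a single new edge, and adds a fresh arc $\langle i,i+1\rangle$ on the new bottom boundary. A closed loop is produced --- and then deleted, contributing the factor $-[2]_{q}$ --- precisely when the vertices $i$ and $i+1$ were themselves joined by an arc of $w$; the new edge joins two primed vertices precisely when $i$ and $i+1$ were both through-strings of $w$; and no other way to leave the class of webs can occur. So the rules as stated produce, for each basic web $w$, a well-defined element of $W^{(n+r,n)}$, and $E_{i}$ extends $\k$-linearly.

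For (ii): writing $T_{i} = q + E_{i}$, the quadratic relation $(T_{i}-q)(T_{i}+q^{-1})=0$ is equivalent to $E_{i}^{2} = -[2]_{q}E_{i}$; the relation $T_{i}T_{j}=T_{j}T_{i}$ for $|i-j|>1$ is equivalent to $E_{i}E_{j}=E_{j}E_{i}$; and, after expanding both sides and substituting $E_{i}^{2}=-[2]_{q}E_{i}$ and $E_{i+1}^{2}=-[2]_{q}E_{i+1}$, the braid relation $T_{i+1}T_{i}T_{i+1}=T_{i}T_{i+1}T_{i}$ follows from the two Temperley--Lieb identities $E_{i}E_{i+1}E_{i}=E_{i}$ and $E_{i+1}E_{i}E_{i+1}=E_{i+1}$. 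I would establish these identities among the $E_{i}$ by a finite case analysis on how the boundary points $i,i+1$ (and also $i+2$, for the braid identities) attach in a given web $w$: each is either a through-string or the L- or R-vertex of an arc. Away from the through-string exception, each resulting case is one of the familiar planar isotopies or loop removals valid in the Temperley--Lieb algebra with loop value $-[2]_{q}$, so the identity is immediate.

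The one place the argument goes beyond routine diagram algebra is confirming that the exceptional rule $wE_{i}=0$ propagates compatibly through the composites above. The points are that if $i,i+1$ are both through-strings of $w$ then neither strand through them is disturbed by any $E_{j}$ with $|i-j|>1$ (each ends at a primed vertex), so $wE_{i}E_{j}=wE_{j}E_{i}=0$; and whenever applying one factor of a product yields a web with two consecutive through-strings, the configuration of $w$ responsible for this --- an arc linking a point of $\{i,i+1\}$ to a point of $\{j,j+1\}$, with through-strings on the remaining two points --- is symmetric in the two factors, so the product vanishes in the other order as well, and analogously for the triple products in the braid relation; this is where planarity is used, to rule out the apparently problematic configurations that cannot actually occur in a web. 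With these observations the zero cases match up, which together with the routine cases completes the verification. If one prefers to avoid this bookkeeping, there is a structural alternative: $T_{i}\mapsto q+e_{i}$ is the classical algebra homomorphism $\HH_{2n+r}(q)\to\operatorname{TL}_{2n+r}$ onto the Temperley--Lieb algebra with loop parameter $-[2]_{q}$; the space $W^{(n+r,n)}$ is the quotient of the full diagram space $\Hom_{\operatorname{TL}}(2n+r,r)$ by the span of those diagrams containing an arc between two top points, which is a right $\operatorname{TL}_{2n+r}$-submodule for the precomposition action since precomposing at the bottom cannot remove a top-to-top arc; and the action defined above is exactly the restriction of scalars of this quotient module along $T_{i}\mapsto q+e_{i}$, whence the module axioms are automatic. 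I would keep the direct case-by-case verification in the text, as it matches the elementary spirit of this section, and record the Temperley--Lieb description afterwards as a remark.
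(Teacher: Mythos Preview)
Your proposal is correct and follows the same approach the paper indicates: the paper's entire proof is the single sentence ``Checking case-by-case verifies that the defining relations of $\HH_{d}(q)$ are satisfied and, hence, the following result holds.'' Your reformulation via $T_{i}=q+E_{i}$, reduction to the Temperley--Lieb relations $E_{i}^{2}=-[2]_{q}E_{i}$, $E_{i}E_{j}=E_{j}E_{i}$, $E_{i}E_{i\pm 1}E_{i}=E_{i}$, and careful tracking of the exceptional zero when $i,i+1$ are both through-strings is exactly the case analysis the paper leaves to the reader; the structural alternative via the quotient of $\Hom_{\operatorname{TL}}(2n+r,r)$ is a clean way to package the same verification.
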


\subsection{A partial order on webs}\label{SS:partialorderonwebs}  If $w$ is a $(n+r,n)$-web and $i =1, \dotsc , 2n+r-1$, then set  
\[
wE_{i}= wT_{i} - qw.
\]
 The following list summarizes the possible ways $E_{i}$ could act on a web $w$ and the resulting nesting number.  Each of these is an elementary diagrammatic calculation which we leave to the reader.

\begin{enumerate}
\item If $i$ and $i+1$ are both L-vertices of $w$ and are adjacent to through-strings, then $wE_{i} = 0$;
\item If $i$ and $i+1$ are both L-vertices of $w$ and exactly one is adjacent to a through-string, then it is necessarily $i$.  In this case, $wE_{i} = w'$ is an $(n+r,n)$-web with $\nest (w') < \nest(w)$;
\item If $i$ and $i+1$ are both L-vertices of $w$ and both are adjacent to arcs, then $wE_{i} = w'$  is an $(n+r,n)$-web with $\nest (w') < \nest(w)$;
\item If $i$ is an L-vertex and $i+1$ is an R-vertex, then necessarily an arc connects $i$ to $i+1$ and $wE_{i}=-[2]_{q}w$ with the nesting number unchanged;
\item If $i$ is an R-vertex and $i+1$ is a L-vertex, then  $wE_{i} = w'$  is an $(n+r,n)$-web with $\nest (w') = \nest(w)+1$;
\item If $i$ and $i+1$ are both R-vertices, then $wE_{i} = w'$  is an $(n+r,n)$-web with $\nest (w') < \nest(w)$;
\end{enumerate}

A contemplation of the above cases shows that there is only one case when the action of $E_{i}$ increases the nesting number of a web.  If $w, w' \in \WW^{(n+r,r)}$, write
\[
w \preceq_{i} w'
\] if $wE_{i}=w'$ and $\nest (w') > \nest (w)$.  Define a partial order $\preceq$ on $\WW^{(n+r,n)}$ by writing $w \preceq w'$ if $w=w'$ or if there are sequences $w_{1}, \dotsc , w_{t} \in \WW^{(n+r,n)}$ and $1 \leq i_{1}, \dotsc , i_{t} \leq  2n+r-1$ such that 
\[
w \preceq_{i_{1}} w_{1} \preceq_{i_{2}} w_{2} \preceq_{i_{3}}  \dotsb  \preceq_{i_{t}} w_{t} = w'.
\]  The Hasse diagram for this partial order is the set $\WW^{(n+r,n)}$ with a directed edge labeled by $i$ from $w$ to $w'$ whenever $w \preceq_{i} w'$.

Observe that the web $w_{0}$ given in \cref{E:w0definition} is the unique $(n+r,n)$-web with nesting number zero. Therefore it is the unique minimal element in $\WW^{(n+r,n)}$ with respect to the partial order $\preceq$. As another example, the following web is the unique maximal element and is the only one to have nesting number $\frac{n(n-1)}{2}+rn$:

\begin{equation*}
\begin{tikzpicture}[baseline=1ex,scale=.5, color=\clr ]
	\begin{pgfonlayer}{nodelayer}
		\node [style=none] (0) at (2.25, 0.25) {\tiny{$\cdots$}};
		\node [style=none] (1) at (6.25, 0.25) {\tiny{$\cdots$}};
		\node [style=none] (5) at (-4.75, 0) {};
		\node [style=none] (6) at (-3.5, 0) {};
		\node [style=none] (7) at (-2.5, 0) {\tiny{$\cdots$}};
		\node [style=none] (8) at (0, 0) {};
		\node [style=none] (9) at (-1.5, 0) {};
		\node [style=none] (11) at (3, 3) {};
		\node [style=none] (12) at (4.25, 3) {};
		\node [style=none] (13) at (5.25, 3)  {\tiny{$\cdots$}};
		\node [style=none] (14) at (7.5, 3) {};
		\node [style=none] (15) at (6.25, 3) {};
		\node [style=none] (16) at (3.5, 0) {};
		\node [style=none] (17) at (5, 0) {};
		\node [style=none] (18) at (1.25, 0) {};
		\node [style=none] (19) at (7.25, 0) {};
	\end{pgfonlayer}
	\begin{pgfonlayer}{edgelayer}
		\draw [thick] [out=90,in=90, looseness=1.5] (16.center) to (17.center);
		\draw [thick, looseness=.5] [out=90,in=90] (18.center) to (19.center);
		\draw [thick, looseness=.25] [out=90,in=270] (5.center) to (11.center);
		\draw [thick, looseness=.25] [out=90,in=270] (6.center) to (12.center);
		\draw [thick, looseness=.25] [out=90,in=270] (9.center) to (15.center);
		\draw [thick, looseness=.25] [out=90,in=270] (8.center) to (14.center);
	\end{pgfonlayer}
\end{tikzpicture} \; .
\end{equation*}
 Note that whenever $w\preceq_{i} w'$, one has $\nest (w') = \nest (w)+1$.  Thus $\nest (w)$ equals the length of any path from $w_{0}$ to $w$ in the Hasse diagram for $\preceq$.

\begin{prop}\label{P:OrderingProposition} Let $\lambda = (n+r,n)$. For all $w, w' \in \WW^{\lambda}$ and all $1 \leq i < 2n+r$, the bijection 
\[
(-)^{\TT} \circ \phi : \WW^{\lambda} \to \Std (\lambda^{\TT}) 
\]
satisfies
\[
\phi (w)^{\TT} \leq_{i} \phi (w')^{\TT} \text{ if and only if }   w \preceq_{i}  w'.
\]   
\end{prop}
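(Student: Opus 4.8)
The plan is to translate the relation $w \preceq_i w'$ on webs into a combinatorial statement about the tableaux $\phi(w)$ and $\phi(w')$ via the bijection of \cref{L:WebTableauxBijection}, and then to match it against the description of $\leq_i$ on $\Std(\lambda^\TT)$ recorded in \cref{R:weakBruhatOrder}. Recall that $\phi(w)$ is the row-standard (indeed standard) tableau of shape $\lambda=(n+r,n)$ whose first row lists the L-vertices of $w$ (in increasing order) and whose second row lists the R-vertices of arcs of $w$ (in increasing order). Transposing, $\phi(w)^\TT$ has shape $\lambda^\TT=(2^n,1^r)$, and its columns are read off directly from the edges of $w$: the $k$th column of length $2$ records the two ends of the $k$th arc (L-vertex on top, R-vertex on bottom), and the remaining singleton columns record the L-vertices of through-strings.

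The key observation to nail down is: for $1\le i<2n+r$, the relation $w\preceq_i w'$ holds precisely when $i$ occurs strictly before $i+1$ in the reading word of $\phi(w)$ (left-to-right along rows, top-to-bottom), $wE_i=w'$, and $\nest(w')>\nest(w)$. By the case analysis of \cref{SS:partialorderonwebs}, the only case in which $wE_i$ is again a single web with strictly larger nesting number is Case~(5): $i$ is an R-vertex of $w$ and $i+1$ is an L-vertex of $w$. So the first step is to show that ``$i$ is an R-vertex and $i+1$ an L-vertex of $w$'' is equivalent to ``$i+1$ appears in the first row of $\phi(w)$ and $i$ appears in the second row of $\phi(w)$'', and that this in turn is equivalent, since $\phi(w)$ is standard and of a two-row shape, to ``$i$ occurs before $i+1$ in the reading word of $\phi(w)$'' — hence, by \cref{R:weakBruhatOrder}, to ``$\phi(w)^\TT\lessdot\phi(w)^\TT.s_i$ in the weak Bruhat order, i.e.\ $\phi(w)^\TT\le_i u$ for $u=\phi(w)^\TT.s_i$''. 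The slightly delicate point is the equivalence between being ``an R/L vertex'' and ``being in the second/first row'', because a priori $i$ could be the L-vertex of a through-string or the R-vertex of an arc; but through-strings contribute their (first-row) L-vertex to the first row of $\phi(w)$, so ``$i$ in the second row'' forces $i$ to be an R-vertex of an arc, and ``$i+1$ in the first row'' only says $i+1$ is an L-vertex (of either an arc or a through-string), which is exactly what Case~(5) requires.

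The second step is to verify that when these conditions hold, the web $w' = wE_i$ produced by Case~(5) satisfies $\phi(w') = \phi(w).s_i$, equivalently $\phi(w')^\TT = \phi(w)^\TT.s_i$. Diagrammatically, $wE_i$ in Case~(5) simply reattaches the two edges incident to vertices $i$ and $i+1$: the edge that ended at the R-vertex $i$ now ends at $i+1$, and the edge that started at the L-vertex $i+1$ now starts at $i$ (one of these edges may be a through-string). This swaps the roles of $i$ and $i+1$ among L- and R-vertices and leaves all other incidences unchanged, so the reading word of $\phi(w')$ is obtained from that of $\phi(w)$ by transposing the values $i,i+1$ — that is, $\phi(w')=\phi(w).s_i$. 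Combined with Step~1 (which guarantees the move is ``upward'', so that $u_1:=\phi(w')^\TT$ indeed satisfies $\phi(w)^\TT\le_i u_1$ rather than $u_1\le_i\phi(w)^\TT$), this gives $\phi(w)^\TT\le_i\phi(w')^\TT$.

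For the converse, suppose $\phi(w)^\TT\le_i\phi(w')^\TT$. Then by \cref{R:weakBruhatOrder}, $i$ occurs before $i+1$ in the reading word of $\phi(w)$ and $\phi(w')^\TT=\phi(w)^\TT.s_i$, so $i+1$ lies in the first row and $i$ in the second row of $\phi(w)$; by Step~1 this means $i$ is an R-vertex and $i+1$ an L-vertex of $w$, i.e.\ we are in Case~(5), so $wE_i$ is a web $w''$ with $\nest(w'')>\nest(w)$, hence $w\preceq_i w''$. By Step~2, $\phi(w'') = \phi(w).s_i = \phi(w')$, and since $\phi$ is a bijection (\cref{L:WebTableauxBijection}), $w''=w'$; thus $w\preceq_i w'$. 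I expect the main obstacle to be purely bookkeeping: carefully checking in Step~2 that the single web output of Case~(5) really does correspond, under $\phi$, to the transposition $s_i$ and that no spurious closed loops or extra nesting adjustments occur — this amounts to drawing the two local pictures (the arc/arc, arc/through-string sub-cases of Case~(5)) and confirming the global effect on L/R-vertex labels, which the paper has already flagged as ``an elementary diagrammatic calculation.''
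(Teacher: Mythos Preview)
Your approach is essentially identical to the paper's: both directions come down to matching Case~(5) of the $E_i$ action on webs against the covering relation in weak Bruhat order via \cref{R:weakBruhatOrder}, using that $\phi(wE_i)=\phi(w).s_i$ and the bijectivity of $\phi$.

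Two small repairs are needed. First, the reading word in \cref{R:weakBruhatOrder} is that of $\phi(w)^{\TT}$, not of $\phi(w)$. Second, the equivalence you assert in Step~1 between ``$i$ in the second row and $i+1$ in the first row of $\phi(w)$'' and ``$i$ precedes $i+1$ in the reading word of $\phi(w)^{\TT}$'' only holds in one direction as stated: for instance, if $i$ and $i+1$ occupy the same column of $\phi(w)$ (so the same row of $\phi(w)^{\TT}$), then $i$ precedes $i+1$ in the reading word but $i$ sits in the \emph{first} row of $\phi(w)$. In the converse direction you must therefore also invoke that $\phi(w)^{\TT}.s_i=\phi(w')^{\TT}$ is standard, which rules out $i,i+1$ sharing a row or column of $\phi(w)^{\TT}$; the paper does exactly this, and then a short column-position argument (if $i$ were in the first column and $i+1$ in the second column of $\phi(w)^{\TT}$, an intermediate entry would have to lie strictly between $i$ and $i+1$) forces $i$ into the second column and $i+1$ into the first. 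With those fixes your proof is the paper's proof.
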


\begin{proof}  Say $w \preceq_{i}  w'$.  Since $wE_{i}= w'$, the sets of $L$-vertices and $R$-vertices of $w$ and $w'$ are identical with one exception: in $w$, $i$ is an R-vertex and $i+1$ is an L-vertex, while in $w'$ these are reversed. Consequently the entries of the first row of  $\phi (w)$ and $\phi (w')$ are identical except that the first contains $i+1$ while the second contains $i$.  The same statement with $i$ and $i+1$ reversed applies to their second rows, as well. As a consequence,   $\phi (w)^{\TT}.s_{i}=\phi (w')^{\TT}$.  Furthermore, since both $\phi (w)$ and $\phi (w')$ are standard, it must be that $i$ appears in a column of $\phi (w)$ which is to strictly to the left of the column containing $i+1$.  That is,  $i$ occurs earlier than $i+1$ when reading the entries of $\phi (w)^{\TT}$ from left-to-right, top-to-bottom.  As discussed in \cref{R:weakBruhatOrder}, this shows $\phi(w)^{\TT} \leq_{i} \phi (w')^{\TT}$.

Now say $\phi(w)^{\TT} \leq_{i} \phi (w')^{\TT}$.   Then $\phi (w)^{\TT}.s_{i}=\phi (w')^{\TT}$ and $i$ occurs earlier than $i+1$ when reading the entries of $\phi (w)^{\TT}$ from left-to-right, top-to-bottom.  Since both $\phi (w)^{\TT}$ and $\phi (w')^{\TT}$ are standard, it cannot be that $i$ and $i+1$ are in the same row or column of $\phi(w)^{\TT}$.  Furthermore, if $i$ were to be in the first column of $\phi (w)^{\TT}$ and $i+1$ in the second column of $\phi (w)^{\TT}$, then since $i$ occurs earlier than $i+1$ when reading the entries of $\phi (w)^{\TT}$, it would be that $i$ is strictly above $i+1$ in $\phi (w)^{\TT}$.  But in this case there would be an entry of $\phi (w)^{\TT}$ which is below $i$ and to the left of $i+1$ and this entry would be larger than $i$ and smaller than $i+1$. As this cannot happen, it must be that $i$ is in the second column of $\phi (w)^{\TT}$ and $i+1$ is in the first column of $\phi (w)^{\TT}$.  That is, $i$ is in the second row of $\phi (w)$ and $i+1$ is in the first row of $\phi (w)$.  That is, $i$ is an $R$-vertex of $w$ and $i+1$ is a $L$-vertex of $w$.  Hence, $\nest (w) < \nest (wE_{i})$ and $w \preceq_{i} wE_{i}$.

It remains to verify that $wE_{i}=w'$.  On the one hand, the sets of $L$-vertices and $R$-vertices of $w$ and $wE_{i}$ are identical with one exception: in $w$, $i$ is a R-vertex and $i+1$ is an L-vertex, while in $wE_{i}$ these are reversed.  That is, $\phi(wE_{i})=\phi (w).s_{i}$.  But since $\phi(w)^{\TT} \leq_{i} \phi (w')^{\TT}$, it is also true that $\phi(w')=\phi (w).s_{i}$. Since $\phi$ is a bijection, the claim follows.
\end{proof}

In particular, $(-)^{\TT} \circ \phi$ defines an isomorphism of directed, labeled graphs between the Hasse diagrams  $(\WW^{(n+r,n)}, \preceq)$ and  $(\Std (\lambda^{\TT}), \leq)$ and, hence, for $w \in \WW^{(n+r,n)}$, 
\[
\nest (w) = \ell (\eta^{-1}(\phi(w)^{\TT})).
\]

\begin{remark} Using the previous proposition, the statements about the weak Bruhat order on $\Std ((n+r,n)^{\TT})$ given in \cref{L:WeakBruhatProperties} directly translate to statements about the partial order on $\WW^{(n+r,n)}$.  In \cref{S:MainResults} we use the web versions of these results without further comment.
\end{remark}

\begin{lemma}\label{L:PartialOrder}  Let $w, w' \in \WW^{(n+r,n)}$ be one of the following pairs of non-crossing webs. In each of the pairs only a local region of the webs is drawn, it is assumed that $w$ and $w'$ are identical other than as indicated, and that every edge adjacent to a vertex in the region is represented in the diagram.  Finally, in the given diagrams, $d_{1}$, $d_{2}$, and $d_{3}$ are possibly empty, fixed, non-crossing webs consisting of only arcs.
\begin{enumerate}
\item 
\begin{equation*}
w \;\; =\;\;
\begin{tikzpicture}[baseline=0ex,scale=.4, color=\clr ]
	\begin{pgfonlayer}{nodelayer}
		\node [style=none] (0) at (-1, 0) {};
		\node [style=none] (1) at (1, 0) {};
		\node [style=none] (2) at (-5, 0) {};
		\node [style=none] (3) at (5, 0) {};
		\node [style=none] (4) at (-1, -0.5) {\tiny{$i$}};
		\node [style=none] (5) at (1, -0.5) {\tiny{$i+1$}};
		\node [style=none] (6) at (-5, -0.5) {\tiny{$p$}};
		\node [style=none] (7) at (5, -0.5) {\tiny{$q$}};
		\node [draw] (8) at (-3, 0.5) {$d_{1}$};
		\node [draw] (9) at (3, 0.5) {$d_{2}$};
%		\node [style=none] (18) at (-5, 0.5) {\tiny{$\cdots$}};
%		\node [style=none] (19) at (5, 0.5) {\tiny{$\cdots$}};
	\end{pgfonlayer}
	\begin{pgfonlayer}{edgelayer}
		\draw [thick, looseness=1.5] [out=90,in=90] (0.center) to (2.center);
		\draw [thick, looseness=1.5] [out=90,in=90] (1.center) to (3.center);
	\end{pgfonlayer}
\end{tikzpicture} \;\;  \text{ and } \;\; 
w' \;\; =\;\;
\begin{tikzpicture}[baseline=0ex,scale=.4, color=\clr ]
	\begin{pgfonlayer}{nodelayer}
		\node [style=none] (0) at (-1, 0) {};
		\node [style=none] (1) at (1, 0) {};
		\node [style=none] (2) at (-5, 0) {};
		\node [style=none] (3) at (5, 0) {};
%		\node [style=none] (4) at (-1, -1) {\tiny{$i$}};
		\node [style=none] (5) at (1, -0.5) {\tiny{$i+1$}};
		\node [style=none] (6) at (-5, -0.5) {\tiny{$p$}};
		\node [style=none] (7) at (5, -0.5) {\tiny{$q$}};
		\node [draw] (8) at (-1.5, 0.5) {$d_{1}$};
		\node [draw] (9) at (3, 0.5) {$d_{2}$};
		\node [style=none] (10) at (-4, 0) {};
		\node [style=none] (11) at (-4, -0.5) {\tiny{$p+1$}};
%		\node [style=none] (18) at (-5, 0.5) {\tiny{$\cdots$}};
%		\node [style=none] (19) at (5, 0.5) {\tiny{$\cdots$}};
	\end{pgfonlayer}
	\begin{pgfonlayer}{edgelayer}
		\draw [thick, looseness=1.5] [out=90,in=90] (10.center) to (1.center);
		\draw [thick, looseness=1.25] [out=90,in=90] (2.center) to (3.center);
	\end{pgfonlayer}
\end{tikzpicture} \; ;
\end{equation*}
\item 
\begin{equation*}
w \;\; =\;\;
\begin{tikzpicture}[baseline=0ex,scale=0.4, color=\clr ]
	\begin{pgfonlayer}{nodelayer}
		\node [style=none] (0) at (-1, 0) {};
		\node [style=none] (1) at (1, 0) {};
		\node [style=none] (2) at (-5, 0) {};
		\node [style=none] (3) at (5, 0) {};
		\node [style=none] (4) at (-1, -0.5) {\tiny{$i$}};
		\node [style=none] (5) at (1, -0.5) {\tiny{$i+1$}};
		\node [style=none] (6) at (-5, -0.5) {\tiny{$p$}};
		\node [style=none] (7) at (5, -0.5) {\tiny{$q$}};
		\node [draw] (8) at (-3, 0.5) {$d_{1}$};
		\node [draw] (9) at (3, 0.5) {$d_{2}$};
%		\node [style=none] (18) at (-5, 0.5) {\tiny{$\cdots$}};
%		\node [style=none] (19) at (5, 0.5) {\tiny{$\cdots$}};
	\end{pgfonlayer}
	\begin{pgfonlayer}{edgelayer}
		\draw [thick, looseness=1.5] [out=90,in=90] (0.center) to (2.center);
		\draw [thick, looseness=1.5] [out=90,in=90] (1.center) to (3.center);
	\end{pgfonlayer}
\end{tikzpicture} \;\;  \text{ and } \;\; 
w' \;\; =\;\;
\begin{tikzpicture}[baseline=0ex,scale=0.4, color=\clr ]
	\begin{pgfonlayer}{nodelayer}
		\node [style=none] (0) at (-1, 0) {};
		\node [style=none] (1) at (1, 0) {};
		\node [style=none] (2) at (-5, 0) {};
		\node [style=none] (3) at (5, 0) {};
		\node [style=none] (4) at (-1, -0.5) {\tiny{$i$}};
%		\node [style=none] (5) at (1, -1) {\tiny{$i+1$}};
		\node [style=none] (6) at (-5, -0.5) {\tiny{$p$}};
		\node [style=none] (7) at (5, -0.5) {\tiny{$q$}};
		\node [draw] (8) at (-3, 0.5) {$d_{1}$};
		\node [draw] (9) at (1.5, 0.5) {$d_{2}$};
%		\node [style=none] (10) at (-4, 0) {};
%		\node [style=none] (11) at (-4, -1) {\tiny{$p+1$}};
		\node [style=none] (12) at (4, 0) {};
		\node [style=none] (13) at (4, -0.5) {\tiny{$q-1$}};
%		\node [style=none] (18) at (-5, 0.5) {\tiny{$\cdots$}};
%		\node [style=none] (19) at (5, 0.5) {\tiny{$\cdots$}};
	\end{pgfonlayer}
	\begin{pgfonlayer}{edgelayer}
		\draw [thick, looseness=1.5] [out=90,in=90] (12.center) to (0.center);
		\draw [thick, looseness=1.25] [out=90,in=90] (2.center) to (3.center);
	\end{pgfonlayer}
\end{tikzpicture} \; ;
\end{equation*}
\item 
\begin{equation*}
w \;\; =\;\;
\begin{tikzpicture}[baseline=0ex,scale=0.4, color=\clr ]
	\begin{pgfonlayer}{nodelayer}
		\node [style=none] (0) at (-1, 0) {};
		\node [style=none] (1) at (2, 0) {};
		\node [style=none] (2) at (-5, 0) {};
		\node [style=none] (3) at (6, 0) {};
		\node [style=none] (4) at (-1, -0.5) {\tiny{$i$}};
		\node [style=none] (5) at (2, -0.5) {\tiny{$j$}};
		\node [style=none] (6) at (-5, -0.5) {\tiny{$p$}};
		\node [style=none] (7) at (6, -0.5) {\tiny{$q$}};
		\node [draw] (8) at (-3, 0.5) {$d_{1}$};
		\node [draw] (9) at (0.5, 0.5) {$d_{2}$};
		\node [draw] (10) at (4, 0.5) {$d_{3}$};
%		\node [style=none] (18) at (-5, 0.5) {\tiny{$\cdots$}};
%		\node [style=none] (19) at (5, 0.5) {\tiny{$\cdots$}};
	\end{pgfonlayer}
	\begin{pgfonlayer}{edgelayer}
		\draw [thick, looseness=1.5] [out=90,in=90] (0.center) to (2.center);
		\draw [thick, looseness=1.5] [out=90,in=90] (1.center) to (3.center);
	\end{pgfonlayer}
\end{tikzpicture} \;\;  \text{ and } \;\; 
w' \;\; =\;\;
\begin{tikzpicture}[baseline=0ex,scale=0.4, color=\clr ]
	\begin{pgfonlayer}{nodelayer}
		\node [style=none] (0) at (-1, 0) {};
		\node [style=none] (1) at (2, 0) {};
		\node [style=none] (2) at (-5, 0) {};
		\node [style=none] (3) at (6, 0) {};
		\node [style=none] (4) at (-1, -0.5) {\tiny{$i$}};
		\node [style=none] (5) at (2, -0.5) {\tiny{$j$}};
		\node [style=none] (6) at (-5, -0.5) {\tiny{$p$}};
		\node [style=none] (7) at (6, -0.5) {\tiny{$q$}};
		\node [draw] (8) at (-3, 0.5) {$d_{1}$};
		\node [draw] (9) at (0.5, 0.5) {$d_{2}$};
		\node [draw] (10) at (4, 0.5) {$d_{3}$};
%		\node [style=none] (18) at (-5, 0.5) {\tiny{$\cdots$}};
%		\node [style=none] (19) at (5, 0.5) {\tiny{$\cdots$}};
	\end{pgfonlayer}
	\begin{pgfonlayer}{edgelayer}
		\draw [thick, looseness=2.5] [out=90,in=90] (0.center) to (1.center);
		\draw [thick, looseness=1.25] [out=90,in=90] (2.center) to (3.center);
	\end{pgfonlayer}
\end{tikzpicture} \; ;
\end{equation*}
\item \begin{equation*}
w \;\; =\;\;
\begin{tikzpicture}[baseline=0ex,scale=0.4, color=\clr ]
	\begin{pgfonlayer}{nodelayer}
		\node [style=none] (0) at (-1, 0) {};
		\node [style=none] (1) at (1, 0) {};
		\node [style=none] (2) at (-5, 0) {};
		\node [style=none] (3) at (3, 0) {};
		\node [style=none] (4) at (-1, -0.5) {\tiny{$i$}};
%		\node [style=none] (5) at (1, -0.5) {\tiny{$i+1$}};
		\node [style=none] (6) at (-5, -0.5) {\tiny{$p$}};
		\node [style=none] (7) at (3, -0.5) {\tiny{$q$}};
		\node [draw] (8) at (-3, 0.5) {$d_{1}$};
		\node [draw] (9) at (1, 0.5) {$d_{2}$};
		\node [style=none] (30) at (5, 4) {};
		\node [style=none] (31) at (5, 4.5) {\tiny{$k'$}};
%		\node [style=none] (18) at (-5, 0.5) {\tiny{$\cdots$}};
%		\node [style=none] (19) at (5, 0.5) {\tiny{$\cdots$}};
	\end{pgfonlayer}
	\begin{pgfonlayer}{edgelayer}
		\draw [thick, looseness=1.5] [out=90,in=90] (0.center) to (2.center);
		\draw [thick, looseness=1.25] [out=270,in=90] (30.center) to (3.center);
	\end{pgfonlayer}
\end{tikzpicture} \;\;  \text{ and } \;\; 
w'\;\; =\;\;
\begin{tikzpicture}[baseline=0ex,scale=0.4, color=\clr ]
	\begin{pgfonlayer}{nodelayer}
		\node [style=none] (0) at (1, 0) {};
		\node [style=none] (1) at (1, 0) {};
		\node [style=none] (2) at (-4.5, 0) {};
		\node [style=none] (3) at (5, 0) {};
		\node [style=none] (4) at (1, -0.5) {\tiny{$i+1$}};
%		\node [style=none] (5) at (1, -1) {\tiny{$i+1$}};
		\node [style=none] (6) at (-4.5, -0.5) {\tiny{$p$}};
%		\node [style=none] (7) at (5, -1) {\tiny{$q$}};
		\node [draw] (8) at (-1, 0.5) {$d_{1}$};
		\node [draw] (9) at (3, 0.5) {$d_{2}$};
		\node [style=none] (10) at (-3, 0) {};
		\node [style=none] (11) at (-3, -0.5) {\tiny{$p+1$}};
%		\node [style=none] (12) at (4, 0) {};
%		\node [style=none] (13) at (4, -1) {\tiny{$q-1$}};
		\node [style=none] (30) at (5, 4) {};
		\node [style=none] (31) at (5, 4.5) {\tiny{$k'$}};
%		\node [style=none] (18) at (-5, 0.5) {\tiny{$\cdots$}};
%		\node [style=none] (19) at (5, 0.5) {\tiny{$\cdots$}};
	\end{pgfonlayer}
	\begin{pgfonlayer}{edgelayer}
		\draw [thick, looseness=1.5] [out=90,in=90] (10.center) to (0.center);
		\draw [thick, looseness=1] [out=90,in=270] (2.center) to (30.center);
	\end{pgfonlayer}
\end{tikzpicture} \; .
\end{equation*}
\end{enumerate}

For each pair, 
\[
w \preceq w'.
\]
\end{lemma}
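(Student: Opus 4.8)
The plan is to deduce each relation $w\preceq w'$ by building an explicit increasing chain in the Hasse diagram of $(\WW^{(n+r,n)},\preceq)$, using only the description of the covering relation in \cref{SS:partialorderonwebs}. The one point needed is this: if $a$ is an $R$-vertex and $a+1$ an $L$-vertex of a web $v$, then $vE_a$ is obtained by splicing the arc ending at $a$ with the edge beginning at $a+1$ into a single edge (an arc or a through-string, as appropriate) and adjoining the trivial arc $\langle a,a+1\rangle$; moreover $\nest(vE_a)=\nest(v)+1$, so $v\preceq_a vE_a$. Since this is the only move that raises the nesting number, producing such a chain from $w$ to $w'$ is exactly what must be done.

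I would prove all four statements simultaneously, by induction on the sizes of the filler webs $d_1,d_2,d_3$ (concretely, on the number of arcs of $d_2$ for configurations (2) and (3), and of $d_1$ for (1) and (4)). In every base case the fillers enforcing the relevant constraints are empty, and then $w$ and $w'$ differ by the single move $E_i$: in each picture $i$ is an $R$-vertex of $w$ (the right endpoint of $\langle p,i\rangle$) and $i+1$ is an $L$-vertex of $w$ (the left endpoint of the remaining edge to its right — the cup $\langle i+1,q\rangle$ in (1)--(2), the cup $\langle j,q\rangle$ in (3), the through-string in (4)), and a direct comparison with the pictures gives $wE_i=w'$, hence $w\preceq_i w'$.

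For the inductive step, in all four cases $(i,i+1)$ is still an $R$-$L$ pair of $w$ (with $i+1$ the left endpoint of the leftmost arc of $d_2$ when that filler is nonempty), so $w\preceq_i w_1:=wE_i$, where $w_1$ opens the two edges at $i,i+1$ into one and adds the arc $\langle i,i+1\rangle$. One then checks $w_1\preceq w'$ by reapplying the lemma. In case (3), $w_1$ again matches configuration (3): its left cup is $\langle p,c\rangle$ (where $\langle i+1,c\rangle$ was the leftmost arc of $d_2$) over the all-arc filler obtained by juxtaposing $d_1$, the arc $\langle i,i+1\rangle$, and the interior of $\langle i+1,c\rangle$; its middle filler is the remainder of $d_2$; and its right cup is unchanged. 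Induction gives $w_1\preceq w_1'$, and then $w_1'$ and $w'$ differ only inside the outer cup $\langle p,q\rangle$ in a way again governed by configuration (3) with a strictly shorter middle filler, whence $w_1'\preceq w'$. In cases (1), (2) and (4) one must, after $E_i$, additionally slide the trivial arc $\langle i,i+1\rangle$ leftward past $d_1$ until it becomes the cup $\langle p+1,i+1\rangle$ enclosing the one-position shift of $d_1$ (in (2), sliding the right endpoint rightward past $d_2$; in (4), also carrying the $L$-endpoint of the through-string leftward past $d_2$ and $d_1$); this rebuilding is carried out by peeling off the outermost block of the relevant filler one block at a time, each peeling being an instance of configuration (1), (2) or (3) with strictly fewer arcs, and composing these chains yields $w\preceq w'$.

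The conceptual content is thus just the displayed fact about $E_a$, and the main obstacle is the bookkeeping: tracking how vertex labels are displaced by successive $E_i$'s (especially the one-position shifts of $d_1$ in (1), (4) and of $d_2$ in (2)), verifying that each intermediate web again falls under one of the four templates with a strictly smaller filler, and confirming that $\nest$ strictly increases at each step. An alternative that trades this bookkeeping for a more mechanical (if still case-laden) computation: use \cref{P:OrderingProposition} to rewrite each claim as $\phi(w)^{\TT}\leq\phi(w')^{\TT}$ in the weak Bruhat order on $\Std(\lambda^{\TT})$; the two standard tableaux agree outside a short window of consecutive entries, so passing through $\eta^{-1}$ to the permutations whose one-line notations are the row-reading words of these tableaux, the claim becomes a containment of inversion sets, checked case by case — again with (1), (2), (4) carrying the extra complication of the one-position shift.
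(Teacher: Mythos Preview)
Your strategy for parts (1), (2), (3) --- build an explicit $\preceq$-chain by inducting on the size of the relevant filler and peeling off one outermost block at a time --- is correct and is essentially the argument of \cite[Lemma~4.9, Theorem~4.10]{RT}, which the paper simply cites (since only arcs are involved, the RT argument applies verbatim).  So for those three parts your proposal and the paper agree, except that you are reproving what the paper imports.

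For part (4) there is a genuine gap.  You state that the induction for (4) is on $|d_1|$, but after the first move $w\preceq_i wE_i$ (when $d_2\neq\varnothing$, so $i+1$ is the $L$-vertex of the leftmost block $\langle i+1,c\rangle$ of $d_2$) the resulting web is again an instance of (4) whose first filler is \emph{larger}: it now contains the old $d_1$, the new arc $\langle i,i+1\rangle$, and the interior of $\langle i+1,c\rangle$.  So that induction does not terminate.  Separately, your claim that every subsequent peeling ``is an instance of configuration (1), (2) or (3)'' cannot suffice for (4): those three configurations involve only arcs and therefore never relocate the $L$-endpoint of a through-string, yet in $w$ the through-string sits at $q$ while in $w'$ it sits at $p$.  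The same problem afflicts your base case: with $d_2$ empty but $d_1$ nonempty, $wE_i$ already has the through-string at $p$ but has the arc $\langle i,i+1\rangle$ sitting to the \emph{right} of $d_1$, which is not yet $w'$.

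The paper repairs this with a clean two-stage argument.  First establish (4) in the special case $d_2=\varnothing$: there $wE_i$ has the through-string at $p$, and the residual task --- sliding $\langle i,i+1\rangle$ leftward over $d_1$ until it becomes $\langle p+1,i+1\rangle$ enclosing the shift of $d_1$ --- is exactly the repeated application of (1) that you describe.  Then, for general $d_2$, write $d_2$ as a horizontal concatenation of blocks and apply the $d_2=\varnothing$ case of (4) once per block (working right to left) to carry the through-string leftward past all of $d_2$, and once more to carry it past the arc $\langle p,i\rangle$.  This replaces your ill-founded induction on $|d_1|$ by an induction on the number of blocks of $d_2$, with the already-proved special case of (4) doing the work at each step.
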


\begin{proof}  For the pairs in (1), (2), and (3), since the argument is local and only involves arcs, the claim follows from \cite[Lemma 4.9, Theorem 4.10]{RT}.

For the pair in (4), first assume that $d_{2}$ is empty.  In which case $q=i+1$.  Then $i$ is an $R$-vertex, $q$ is an $L$-vertex, and so $ w \preceq wE_{i}$.  Furthermore, 
\begin{equation*}
wE_{i}\;\; =\;\;
\begin{tikzpicture}[baseline=0ex,scale=0.4, color=\clr ]
	\begin{pgfonlayer}{nodelayer}
		\node [style=none] (0) at (0, 0) {};
		\node [style=none] (1) at (3, 0) {};
		\node [style=none] (2) at (-4.5, 0) {};
		\node [style=none] (3) at (5, 0) {};
		\node [style=none] (4) at (0, -0.5) {\tiny{$i$}};
		\node [style=none] (5) at (3, -0.5) {\tiny{$i+1$}};
		\node [style=none] (6) at (-4.5, -0.5) {\tiny{$p$}};
%		\node [style=none] (7) at (5, -1) {\tiny{$q$}};
		\node [draw] (8) at (-2, 0.5) {$d_{1}$};
%		\node [draw] (9) at (3, 0.5) {$d_{2}$};
		\node [style=none] (10) at (-3, 0) {};
%		\node [style=none] (11) at (-3, -0.5) {\tiny{$p+1$}};
%		\node [style=none] (12) at (4, 0) {};
%		\node [style=none] (13) at (4, -1) {\tiny{$q-1$}};
		\node [style=none] (30) at (5, 4) {};
		\node [style=none] (31) at (5, 4.5) {\tiny{$k'$}};
%		\node [style=none] (18) at (-5, 0.5) {\tiny{$\cdots$}};
%		\node [style=none] (19) at (5, 0.5) {\tiny{$\cdots$}};
	\end{pgfonlayer}
	\begin{pgfonlayer}{edgelayer}
		\draw [thick, looseness=1.5] [out=90,in=90] (1.center) to (0.center);
		\draw [thick, looseness=1] [out=90,in=270] (2.center) to (30.center);
	\end{pgfonlayer}
\end{tikzpicture} \; .
\end{equation*}  By repeatedly using (1) on the subdiagram obtained by excluding the through-string which connects vertex $p$ to vertex $k'$, one can move the arc $\langle i, i+1 \rangle$ so that it covers $d_{1}$ and obtain $w'$.  That is, one can construct a chain of webs $wE_{i} \preceq w_{1} \preceq w_{2} \preceq \dotsb \preceq w'$.  It follows that $w \preceq w'$ in the case when $d_{2}$ is empty.

To obtain the general case one only need observe that in general $d_{2}$ consists of the horizontal concatenation of subdiagrams which look like:
\begin{equation*}
\begin{tikzpicture}[baseline=0ex,scale=0.4, color=\clr ]
	\begin{pgfonlayer}{nodelayer}
		\node [style=none] (0) at (0, 0) {};
%		\node [style=none] (1) at (3, 0) {};
		\node [style=none] (2) at (-5, 0) {};
%		\node [style=none] (3) at (5, 0) {};
%		\node [style=none] (4) at (0, -0.5) {\tiny{$i$}};
%		\node [style=none] (5) at (3, -0.5) {\tiny{$i+1$}};
%		\node [style=none] (6) at (-4.5, -0.5) {\tiny{$p$}};
%%		\node [style=none] (7) at (5, -1) {\tiny{$q$}};
		\node [draw] (8) at (-2.5, 0.5) {$d_{3}$};
%		\node [draw] (9) at (3, 0.5) {$d_{2}$};
%		\node [style=none] (10) at (-3, 0) {};
%		\node [style=none] (11) at (-3, -0.5) {\tiny{$p+1$}};
%		\node [style=none] (12) at (4, 0) {};
%		\node [style=none] (13) at (4, -1) {\tiny{$q-1$}};
%		\node [style=none] (30) at (5, 4) {};
%		\node [style=none] (31) at (5, 4.5) {\tiny{$k$}};
%		\node [style=none] (18) at (-5, 0.5) {\tiny{$\cdots$}};
%		\node [style=none] (19) at (5, 0.5) {\tiny{$\cdots$}};
	\end{pgfonlayer}
	\begin{pgfonlayer}{edgelayer}
		\draw [thick, looseness=1.5] [out=90,in=90] (2.center) to (0.center);
%		\draw [thick, looseness=1] [out=90,in=270] (2.center) to (30.center);
	\end{pgfonlayer}
\end{tikzpicture} \; ,
\end{equation*} where $d_{3}$ is a possibly empty non-crossing web.  By starting with $w$ and repeatedly applying the result of the previous paragraph, the through-string can be ``moved'' past each of these subdiagrams and, hence, past $d_{2}$.  Using the result one more time to move the through-string past the arc containing $d_{1}$ yields $w'$.  In short, there is once again a chain of webs $w \preceq w_{1} \preceq w_{2} \preceq \dotsb \preceq w'$ and the result follows.
 \end{proof}

\subsection{Bubbles and crossings}\label{SS:bubbles and crossings} In this section we expand the allowable $(n+r,n)$-webs to include diagrams with crossings and bubbles.  Namely, introduce the following local relations in $W^{(n+r,n)}$:

\begin{equation}\label{E:crossingrelation}
\begin{tikzpicture}[baseline=0ex,scale=.5, color=\clr ]
	\begin{pgfonlayer}{nodelayer}
		\node [style=none] (0) at (-1, -1.5) {};
		\node [style=none] (1) at (1, -1.5) {};
		\node [style=none] (2) at (-1, 1.5) {};
		\node [style=none] (3) at (1, 1.5) {};
		\node [style=none] (4) at (0.28, -0.4) {};
		\node [style=none] (5) at (-0.28, 0.4) {};
	\end{pgfonlayer}
	\begin{pgfonlayer}{edgelayer}
		\draw [thick] (2.center) to (1.center);
		\draw [ultra thick, color=white] (4.center) to (5.center);
		\draw [thick] (3.center) to (0.center);
%		\draw [thick, dashed, color=gray] (0,0) circle [radius=2];
	\end{pgfonlayer}
\end{tikzpicture}
\; \; := \; \;
q\;\;  \begin{tikzpicture}[baseline=0ex,scale=.5, color=\clr ]
	\begin{pgfonlayer}{nodelayer}
		\node [style=none] (0) at (-1, -1.5) {};
		\node [style=none] (1) at (1, -1.5) {};
		\node [style=none] (2) at (-1, 1.5) {};
		\node [style=none] (3) at (1, 1.5) {};
		\node [style=none] (4) at (0.28, -0.4) {};
		\node [style=none] (5) at (-0.28, 0.4) {};
	\end{pgfonlayer}
	\begin{pgfonlayer}{edgelayer}
		\draw [thick] (0.center) to (2.center);
%		\draw [ultra thick, color=white] (4.center) to (5.center);
		\draw [thick] (1.center) to (3.center);
%		\draw [thick, dashed, color=gray] (0,0) circle [radius=2];
	\end{pgfonlayer}
\end{tikzpicture}
\; \; + \; \;
\begin{tikzpicture}[baseline=0ex,scale=.5, color=\clr ]
	\begin{pgfonlayer}{nodelayer}
		\node [style=none] (0) at (-1, -1.5) {};
		\node [style=none] (1) at (1, -1.5) {};
		\node [style=none] (2) at (-1, 1.5) {};
		\node [style=none] (3) at (1, 1.5) {};
		\node [style=none] (4) at (0.28, -0.4) {};
		\node [style=none] (5) at (-0.28, 0.4) {};
	\end{pgfonlayer}
	\begin{pgfonlayer}{edgelayer}
		\draw [thick, looseness=1.25] [out=90,in=90] (0.center) to (1.center);
%		\draw [ultra thick, color=white] (4.center) to (5.center);
		\draw [thick, looseness=1.25] [out=270,in=270] (2.center) to (3.center);
%		\draw [thick, dashed, color=gray] (0,0) circle [radius=2];
	\end{pgfonlayer}
\end{tikzpicture} \; ,
\end{equation} and
\begin{equation}\label{E:bubblerelation}
\begin{tikzpicture}[baseline=0ex,scale=.65, color=\clr ]
	\begin{pgfonlayer}{nodelayer}
		\node [style=none] (0) at (-1, 0) {};
		\node [style=none] (1) at (1, 0) {};
	\end{pgfonlayer}
	\begin{pgfonlayer}{edgelayer}
%		\draw [thick, looseness=1.5] [out=90,in=90] (0.center) to (1.center);
%		\draw [thick, looseness=1.5] [out=270,in=270] (0.center) to (1.center);
		\draw [thick] (0,0) circle [radius=1];
%		\draw [thick, dashed, color=gray] (0,0) circle [radius=1.5];

	\end{pgfonlayer}
\end{tikzpicture}
\; \; := \; \; -[2]_{q} \; .
\end{equation}   By the first we mean that, by definition, a web with a crossing of two edges as indicated is shorthand for the linear combination of two webs obtained by resolving the crossing using the equality given in \cref{E:crossingrelation} and otherwise leaving the web unchanged.  Similarly, the second is a shorthand for whenever a web has an isolated connected component (a ``bubble'') it is equal to the web obtained by deleting the component and scaling the resulting web by $-[2]_{q}$.  Going forward it is also convenient to allow for webs with one or more edges connecting a pair of vertices from among $\{1', \dotsc , r' \}$ by declaring such webs are equal to zero.

For example, the diagram
\begin{equation*}
\begin{tikzpicture}[baseline=0ex,scale=.5, color=\clr ]
	\begin{pgfonlayer}{nodelayer}
		\node [style=none] (0) at (-3, -1.5) {};
		\node [style=none] (1) at (-1, -1.5) {};
		\node [style=none] (2) at (-3, 1.5) {};
		\node [style=none] (3) at (-1, 1.5) {};
		\node [style=none] (4) at (-1.75, -0.2) {};
		\node [style=none] (5) at (-2.25, 0.2) {};
		\node [style=none] (6) at (1, -1.5) {};
		\node [style=none] (7) at (3, -1.5) {};
		\node [style=none] (8) at (1, 1.5) {};
		\node [style=none] (9) at (3, 1.5) {};
		\node [style=none] (10) at (2.25, -0.2) {};
		\node [style=none] (11) at (1.75, 0.2) {};
		\node [style=none] (12) at (5, 1.5) {};
		\node [style=none] (13) at (5, -1.5) {};
		\node [style=none] (14) at (5, 4) {};
	\end{pgfonlayer}
	\begin{pgfonlayer}{edgelayer}
		\draw [thick] [out=270,in=90] (2.center) to (1.center);
		\draw [ultra thick, color=white] (4.center) to (5.center);
		\draw [thick] [out=270,in=90] (3.center) to (0.center);
		\draw [thick] [out=270,in=90] (8.center) to (7.center);
		\draw [ultra thick, color=white] (10.center) to (11.center);
		\draw [thick] [out=270,in=90] (9.center) to (6.center);
		\draw [thick] (12.center) to (13.center);
		\draw [thick, looseness=1.5] [out=90,in=90] (3.center) to (2.center);
		\draw [thick, looseness=1.5] [out=90,in=90] (9.center) to (12.center);
		\draw [thick] [out=90,in=270] (8.center) to (14.center);
	\end{pgfonlayer}
\end{tikzpicture}
\end{equation*} is a shorthand for the following linear combination of non-crossing $(2+1,2)$-webs:
\begin{equation*}
q(q-[2]_{q}) \; \; 
\begin{tikzpicture}[baseline=-2ex,scale=.5, color=\clr ]
	\begin{pgfonlayer}{nodelayer}
		\node [style=none] (0) at (-3, -1.5) {};
		\node [style=none] (1) at (-1, -1.5) {};
		\node [style=none] (2) at (-3, 1.5) {};
		\node [style=none] (3) at (-1, 1.5) {};
		\node [style=none] (4) at (-1.75, -0.2) {};
		\node [style=none] (5) at (-2.25, 0.2) {};
		\node [style=none] (6) at (1, -1.5) {};
		\node [style=none] (7) at (3, -1.5) {};
		\node [style=none] (8) at (1, 1.5) {};
		\node [style=none] (9) at (3, 1.5) {};
		\node [style=none] (10) at (2.25, -0.2) {};
		\node [style=none] (11) at (1.75, 0.2) {};
		\node [style=none] (12) at (5, 1.5) {};
		\node [style=none] (13) at (5, -1.5) {};
%		\node [style=none] (14) at (5, 4) {14};
	\end{pgfonlayer}
	\begin{pgfonlayer}{edgelayer}
%		\draw [thick] [out=270,in=90] (2.center) to (1.center);
%		\draw [thick] [out=270,in=90] (3.center) to (0.center);
%		\draw [thick] [out=270,in=90] (8.center) to (7.center);
%		\draw [thick] [out=270,in=90] (9.center) to (6.center);
%		\draw [thick] (12.center) to (13.center);
		\draw [thick, looseness=1.5] [out=90,in=90] (0.center) to (1.center);
		\draw [thick, looseness=1.5] [out=90,in=90] (7.center) to (13.center);
		\draw [thick] [out=90,in=270] (6.center) to (12.center);
	\end{pgfonlayer}
\end{tikzpicture}
\; \; + \; \; (q-[2]_{q}) \; \;
\begin{tikzpicture}[baseline=-2ex,scale=.5, color=\clr ]
	\begin{pgfonlayer}{nodelayer}
		\node [style=none] (0) at (-3, -1.5) {};
		\node [style=none] (1) at (-1, -1.5) {};
		\node [style=none] (2) at (-3, 1.5) {};
		\node [style=none] (3) at (-1, 1.5) {};
		\node [style=none] (4) at (-1.75, -0.2) {};
		\node [style=none] (5) at (-2.25, 0.2) {};
		\node [style=none] (6) at (1, -1.5) {};
		\node [style=none] (7) at (3, -1.5) {};
		\node [style=none] (8) at (1, 1.5) {};
		\node [style=none] (9) at (3, 1.5) {};
		\node [style=none] (10) at (2.25, -0.2) {};
		\node [style=none] (11) at (1.75, 0.2) {};
		\node [style=none] (12) at (5, 1.5) {};
		\node [style=none] (13) at (5, -1.5) {};
%		\node [style=none] (14) at (5, 4) {14};
	\end{pgfonlayer}
	\begin{pgfonlayer}{edgelayer}
%		\draw [thick] [out=270,in=90] (2.center) to (1.center);
%		\draw [thick] [out=270,in=90] (3.center) to (0.center);
%		\draw [thick] [out=270,in=90] (8.center) to (7.center);
%		\draw [thick] [out=270,in=90] (9.center) to (6.center);
%		\draw [thick] (12.center) to (13.center);
		\draw [thick, looseness=1.5] [out=90,in=90] (0.center) to (1.center);
		\draw [thick, looseness=1.5] [out=90,in=90] (7.center) to (6.center);
		\draw [thick] [out=90,in=270] (13.center) to (12.center);
	\end{pgfonlayer}
\end{tikzpicture}\; .
\end{equation*}

From now on, an $(n+r,n)$-web is allowed to have finitely many crossings and bubbles.  If we need to emphasize that an $(n+r,n)$-web is in fact an element of $\WW^{(n+r,n)}$, then we will refer to it as a non-crossing web.

The action of $\HH_{2n+r}(q)$ on $W^{(n+r,n)}$ is particularly easy to describe using this expanded set of webs.  Namely, given $m \geq 1$ and $1 \leq  i \leq n-1$,  define
\begin{equation*}
\beta_{i}=\beta^{m}_{i}
\; \; = \; \;
\begin{tikzpicture}[baseline=0ex,scale=.65, color=\clr ]
	\begin{pgfonlayer}{nodelayer}
		\node [style=none] (0) at (-1, 1) {};
		\node [style=none] (1) at (1, 1) {};
		\node [style=none] (2) at (-1, -1) {};
		\node [style=none] (3) at (1, -1) {};
		\node [style=none] (4) at (-2, 1) {};
		\node [style=none] (5) at (-2, -1) {};
		\node [style=none] (6) at (2, 1) {};
		\node [style=none] (7) at (2, -1) {};
		\node [style=none] (8) at (-4, 1) {};
		\node [style=none] (9) at (-4, -1) {};
		\node [style=none] (10) at (-5, 1) {};
		\node [style=none] (11) at (-5, -1) {};
		\node [style=none] (12) at (5, 1) {};
		\node [style=none] (13) at (5, -1) {};
		\node [style=none] (14) at (4, 1) {};
		\node [style=none] (15) at (4, -1) {};
		\node [style=none] (16) at (-3, 0) {\tiny{$\cdots$}};
		\node [style=none] (17) at (3, 0) {\tiny{$\cdots$}};
		\node [style=none] (18) at (-1, 1.5) {$i$};
		\node [style=none] (19) at (1, 1.5) {$i+1$};
		\node [style=none] (20) at (-1, -1.5) {$i$};
		\node [style=none] (21) at (1, -1.5) {$i+1$};
		\node [style=none] (22) at (-0.25, 0.12) {};
		\node [style=none] (23) at (0.25, -0.12) {};
	\end{pgfonlayer}
	\begin{pgfonlayer}{edgelayer}
		\draw [thick] [out=270,in=90] (0.center) to (3.center);
		\draw [ultra thick, color=white] (22.center) to (23.center);
		\draw [thick] [out=270, in=90] (1.center) to (2.center);
		\draw [thick] (6.center) to (7.center);
		\draw [thick] (14.center) to (15.center);
		\draw [thick] (12.center) to (13.center);
		\draw [thick] (4.center) to (5.center);
		\draw [thick] (8.center) to (9.center);
		\draw [thick] (10.center) to (11.center);
	\end{pgfonlayer}
\end{tikzpicture} \; \; ,
\end{equation*} where there are a total of $m$ edges with only the $i$th and $(i+1)$th edges crossed.  Given an $(n+r,n)$-web $w$ and $1 \leq i \leq 2n+r-1$, write $w\beta_{i}$ for the web given by concatenating $\beta_{i}$ onto the bottom of $w$. Since we declare that webs with an edge connecting vertices from among $\{1', \dotsc , r' \}$ are zero,  $wT_{i}=w\beta_{i}$ in $W^{(n+r,n)}$.

It is a satisfying exercise to verify the defining relations of $\HH_{d}(q)$ using the crossing and bubble relations.  These relations can also be used to obtain the following version of the Reidemeister II move.

\begin{lemma}\label{L:RII}  In $W^{(n+r,n)}$ the following local relations hold:
\begin{equation*}
\begin{tikzpicture}[baseline=0ex,scale=.4, color=\clr ]
	\begin{pgfonlayer}{nodelayer}
		\node [style=none] (0) at (-2, 3) {};
		\node [style=none] (1) at (0, 3) {};
		\node [style=none] (3) at (0, 1) {};
		\node [style=none] (5) at (-5, 0) {};
		\node [style=none] (6) at (-5, -2) {};
		\node [style=none] (7) at (-3, -2) {};
		\node [style=none] (12) at (-5, 3) {};
		\node [style=none] (13) at (-1.25, 2.12) {};
		\node [style=none] (14) at (-0.75, 1.88) {};
		\node [style=none] (15) at (-4.25, -0.88) {};
		\node [style=none] (16) at (-3.75, -1.13) {};
		\node [style=none] (23) at (0, -2) {};
	\end{pgfonlayer}
	\begin{pgfonlayer}{edgelayer}
		\draw [thick] [out=270, in=90] (0.center) to (3.center);
		\draw [ultra thick, color=white] (13.center) to (14.center);
		\draw [thick] (12.center) to (5.center);
		\draw [thick] [out=270, in=90] (5.center) to (7.center);
		\draw [ultra thick, color=white] (15.center) to (16.center);
		\draw [thick] [out=90,in=90, looseness=1.5] (0.center) to (12.center);
		\draw [thick] (1.center) to (6.center);
		\draw [thick] (3.center) to (23.center);
	\end{pgfonlayer}
\end{tikzpicture}
\;\; = \; \;
q\; \begin{tikzpicture}[baseline=0ex,scale=.4, color=\clr ]
	\begin{pgfonlayer}{nodelayer}
		\node [style=none] (0) at (-2, 3) {};
		\node [style=none] (1) at (0, 3) {};
		\node [style=none] (3) at (0, -2) {};
		\node [style=none] (5) at (-5, 0) {};
		\node [style=none] (6) at (-5, -2) {};
		\node [style=none] (7) at (-3, -2) {};
		\node [style=none] (12) at (-5, 3) {};
		\node [style=none] (13) at (-1.25, 2.12) {};
		\node [style=none] (14) at (-0.75, 1.88) {};
		\node [style=none] (15) at (-4.25, -0.88) {};
		\node [style=none] (16) at (-3.75, -1.13) {};
	\end{pgfonlayer}
	\begin{pgfonlayer}{edgelayer}
%		\draw [thick] [out=270, in=90] (0.center) to (3.center);
%		\draw [ultra thick, color=white] (13.center) to (14.center);
%		\draw [thick] (12.center) to (5.center);
%		\draw [thick] [out=270, in=90] (5.center) to (7.center);
%		\draw [ultra thick, color=white] (15.center) to (16.center);
		\draw [thick] [out=90,in=90, looseness=1.5] (3.center) to (7.center);
		\draw [thick] (1.center) to (6.center);
	\end{pgfonlayer}
\end{tikzpicture}
\hspace{.5in} \text{ and } \hspace{.5in}
\begin{tikzpicture}[baseline=0ex,xscale=-1, scale=.4, color=\clr ]
	\begin{pgfonlayer}{nodelayer}
		\node [style=none] (0) at (-2, 3) {};
		\node [style=none] (1) at (0, 3) {};
		\node [style=none] (3) at (0, 1) {};
		\node [style=none] (5) at (-5, 0) {};
		\node [style=none] (6) at (-5, -2) {};
		\node [style=none] (7) at (-3, -2) {};
		\node [style=none] (12) at (-5, 3) {};
		\node [style=none] (13) at (-0.75, 2.25) {};
		\node [style=none] (14) at (-1.25, 1.75) {};
		\node [style=none] (15) at (-3.75, -0.75) {};
		\node [style=none] (16) at (-4.25, -1.25) {};
		\node [style=none] (23) at (0, -2) {};
	\end{pgfonlayer}
	\begin{pgfonlayer}{edgelayer}
		\draw [thick] (1.center) to (6.center);
		\draw [ultra thick, color=white] (13.center) to (14.center);
		\draw [thick] [out=270, in=90] (0.center) to (3.center);
		\draw [thick] (12.center) to (5.center);
		\draw [ultra thick, color=white] (15.center) to (16.center);
		\draw [thick] [out=270, in=90] (5.center) to (7.center);
		\draw [thick] [out=90,in=90, looseness=1.5] (0.center) to (12.center);
		\draw [thick] (3.center) to (23.center);
	\end{pgfonlayer}
\end{tikzpicture}
\;\; = \; \;
q\; \begin{tikzpicture}[baseline=0ex,xscale=-1,scale=.4, color=\clr ]
	\begin{pgfonlayer}{nodelayer}
		\node [style=none] (0) at (-2, 3) {};
		\node [style=none] (1) at (0, 3) {};
		\node [style=none] (3) at (0, -2) {};
		\node [style=none] (5) at (-5, 0) {};
		\node [style=none] (6) at (-5, -2) {};
		\node [style=none] (7) at (-3, -2) {};
		\node [style=none] (12) at (-5, 3) {};
		\node [style=none] (13) at (-1.25, 2.12) {};
		\node [style=none] (14) at (-0.75, 1.88) {};
		\node [style=none] (15) at (-4.25, -0.88) {};
		\node [style=none] (16) at (-3.75, -1.13) {};
	\end{pgfonlayer}
	\begin{pgfonlayer}{edgelayer}
%		\draw [thick] [out=270, in=90] (0.center) to (3.center);
%		\draw [ultra thick, color=white] (13.center) to (14.center);
%		\draw [thick] (12.center) to (5.center);
%		\draw [thick] [out=270, in=90] (5.center) to (7.center);
%		\draw [ultra thick, color=white] (15.center) to (16.center);
		\draw [thick] [out=90,in=90, looseness=1.5] (3.center) to (7.center);
		\draw [thick] (1.center) to (6.center);
	\end{pgfonlayer}
\end{tikzpicture}\;  .
\end{equation*}
\end{lemma}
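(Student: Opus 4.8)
The plan is to prove both displayed identities by the same computation: resolve the two crossings in the left-hand diagram using \eqref{E:crossingrelation}, simplify any resulting closed component by \eqref{E:bubblerelation}, and observe that the unwanted terms cancel. Both pictures involve only the one kind of crossing that \eqref{E:crossingrelation} resolves, so it suffices to treat the first identity; the second is proved verbatim. Fix the first identity and label the upper crossing $C_{1}$ and the lower crossing $C_{2}$.

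Resolving $C_{1}$ and then $C_{2}$ by \eqref{E:crossingrelation} expresses the left-hand side as a $\k$-linear combination of the $2^{2}=4$ webs obtained by choosing, at each of $C_{1}$ and $C_{2}$, either the \emph{parallel} smoothing (which carries the coefficient $q$) or the \emph{turnback} smoothing (coefficient $1$). A short diagram chase then shows: the three choices (parallel, parallel), (parallel, turnback), and (turnback, turnback) all give rise, after a planar isotopy, to one and the same crossingless web $w_{1}$, namely the one whose boundary endpoints are matched oppositely to those of the web on the right-hand side; and, among these three, exactly the choice (parallel, turnback) produces in addition an isolated closed loop, which by \eqref{E:bubblerelation} contributes a factor $-[2]_{q}$. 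The remaining choice, (turnback, parallel), produces exactly the web $w_{2}$ appearing (together with the scalar $q$) on the right-hand side, and creates no closed loop. Tallying coefficients, the left-hand side equals
\[
\bigl(q^{2}-q[2]_{q}+1\bigr)\,w_{1}\;+\;q\,w_{2}\,.
\]
Since $q[2]_{q}=q(q+q^{-1})=q^{2}+1$, the coefficient of $w_{1}$ is zero, so the left-hand side collapses to $q\,w_{2}$, which is exactly the right-hand side of the first identity.

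The one place that requires care is the diagram chase underlying the previous paragraph: checking that (parallel, parallel), (parallel, turnback), and (turnback, turnback) all yield the single web $w_{1}$, that only (parallel, turnback) among them spawns a closed loop, and that (turnback, parallel) yields $w_{2}$ exactly. Each of these is an elementary tracing of strands through the two smoothed crossings, best carried out with the picture in hand, but it is where essentially all of the work lies. Once it is done, the identity reduces to the scalar fact $q^{2}-q[2]_{q}+1=0$.
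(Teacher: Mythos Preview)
Your proof is correct and is exactly the computation the paper has in mind: the paper states the lemma without proof, simply remarking that the crossing and bubble relations ``can also be used to obtain'' it, and your case analysis of the four smoothings (yielding the cancellation $q^{2}-q[2]_{q}+1=0$) is precisely how one carries this out.
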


\begin{remark}\label{R:LRvertices}
Note that the definition of $L$-vertex and $R$-vertex directly generalizes to $(n+r,n)$-webs with crossings and bubbles. In particular, for any $(n+r,n)$-web $w$ and any $1 \leq i \leq 2n+r-1$,  the $L$- and $R$-vertices of $w\beta_{i}$ are the same as for $w$ except that the vertex type of $i$ and $i+1$ are interchanged (e.g., if in $w$ $i$ is an $R$-vertex and $i+1$ is an $L$-vertex, then in $w\beta_{i}$ $i$ is an $L$-vertex and $i+1$ is an $R$-vertex).
\end{remark}

We end this section by giving names to several local diagrams which will be relevant in the sequel.  By ``local diagram'' we mean an area of a possibly larger web such that within this region it looks as depicted.   The local diagrams shown below will be called \emph{bigons}, \emph{triangles}, and \emph{squares}, respectively:
\begin{equation}\label{E:TriangleSquare}
\begin{tikzpicture}[baseline=2ex,scale=.4, color=\clr ]
	\begin{pgfonlayer}{nodelayer}
		\node [style=none] (0) at (-2, 3) {};
		\node [style=none] (1) at (0, 3) {};
		\node [style=none] (3) at (0, 1) {};
		\node [style=none] (5) at (-5, 0) {};
		\node [style=none] (6) at (-5, -2) {};
		\node [style=none] (7) at (-3, -2) {};
		\node [style=none] (12) at (-5, 3) {};
		\node [style=none] (13) at (-1.25, 2.12) {};
		\node [style=none] (14) at (-0.75, 1.88) {};
		\node [style=none] (15) at (-4.25, -0.88) {};
		\node [style=none] (16) at (-3.75, -1.13) {};
		\node [style=none] (23) at (0, -2) {};
		\node [style=none] (24) at (-1.5, -2.5) {S};
		\node [style=none] (24) at (-3.5, 5) {N};
		\node [style=none] (24) at (1, 0.85) {E};
		\node [style=none] (24) at (-4, -2.5) {W};
	\end{pgfonlayer}
	\begin{pgfonlayer}{edgelayer}
		\draw [thick] [out=270, in=90] (0.center) to (3.center);
		\draw [ultra thick, color=white] (13.center) to (14.center);
		\draw [thick] (12.center) to (5.center);
		\draw [thick] [out=270, in=90] (5.center) to (7.center);
		\draw [ultra thick, color=white] (15.center) to (16.center);
		\draw [thick] [out=90,in=90, looseness=1.5] (0.center) to (12.center);
		\draw [thick] (1.center) to (6.center);
		\draw [thick] (3.center) to (23.center);
	\end{pgfonlayer}
\end{tikzpicture}\; , \hspace{0.5in} 
\begin{tikzpicture}[baseline=0ex,scale=.5, color=\clr ]
	\begin{pgfonlayer}{nodelayer}
		\node [style=none] (0) at (1, 1) {};
		\node [style=none] (1) at (-3, -1) {};
		\node [style=none] (2) at (1, 1) {};
		\node [style=none] (3) at (-1, 1) {};
		\node [style=none] (6) at (1, 1) {};
		\node [style=none] (7) at (3, -1) {};
		\node [style=none] (8) at (1.75, 0.25) {};
		\node [style=none] (9) at (3, 1) {};
		\node [style=none] (10) at (-1, -3) {};
		\node [style=none] (11) at (2.25, -0.25) {};
		\node [style=none] (12) at (-1, -1) {};
		\node [style=none] (13) at (-0.25, -1.75) {};
		\node [style=none] (14) at (1, -3) {};
		\node [style=none] (15) at (-1.75, -0.25) {};
		\node [style=none] (16) at (0.25, -2.25) {};
		\node [style=none] (17) at (-3, 1) {};
		\node [style=none] (18) at (-1, -1) {};
		\node [style=none] (19) at (-2.25, 0.25) {};
		\node [style=none] (20) at (0, 3) {};
		\node [style=none] (21) at (0, 2.25) {N};
		\node [style=none] (22) at (-2, 2) {};
		\node [style=none] (23) at (2, 2) {};
		\node [style=none] (24) at (-3, 0) {W};
		\node [style=none] (25) at (3, 0) {E};
		\node [style=none] (26) at (2, -2) {S};
		\node [style=none] (27) at (0, -3) {S};
		\node [style=none] (28) at (-2, -2) {S};
%		\node [style=none] (29) at (-2.5, -2.5) {X};
	\end{pgfonlayer}
	\begin{pgfonlayer}{edgelayer}
		\draw [thick, looseness=1.5] [out=45,in=135] (3.center) to (6.center);
		\draw [thick] (14.center) to (17.center);
		\draw [ultra thick, color=white] (19.center) to (15.center);
		\draw [ultra thick, color=white] (13.center) to (16.center);
		\draw [thick] (1.center) to (3.center);
		\draw [thick] (0.center) to (2.center);
		\draw [thick] (7.center) to (6.center);
		\draw [ultra thick, color=white] (8.center) to (11.center);
		\draw [thick] (10.center) to (9.center);
%		\draw [thick] (14.center) to (18.center);
%		\draw [thick, dashed, color=gray] (0,0) circle [radius=4];
	\end{pgfonlayer}
\end{tikzpicture}\; , \hspace{0.5in} \text{and} \hspace{0.5in}
\begin{tikzpicture}[baseline=0ex,scale=.5, color=\clr ]
	\begin{pgfonlayer}{nodelayer}
		\node [style=none] (0) at (1, 1) {};
		\node [style=none] (1) at (-3, -1) {};
		\node [style=none] (2) at (-1, 3) {};
		\node [style=none] (3) at (1, 3) {};
		\node [style=none] (4) at (-0.25, 2.25) {};
		\node [style=none] (5) at (0.25, 1.75) {};
		\node [style=none] (6) at (1, 1) {};
		\node [style=none] (7) at (3, -1) {};
		\node [style=none] (8) at (1.75, 0.25) {};
		\node [style=none] (9) at (3, 1) {};
		\node [style=none] (10) at (-1, -3) {};
		\node [style=none] (11) at (2.25, -0.25) {};
		\node [style=none] (12) at (-1, -1) {};
		\node [style=none] (13) at (-0.25, -1.75) {};
		\node [style=none] (14) at (1, -3) {};
		\node [style=none] (15) at (-1.75, -0.25) {};
		\node [style=none] (16) at (0.25, -2.25) {};
		\node [style=none] (17) at (-3, 1) {};
		\node [style=none] (18) at (-1, -1) {};
		\node [style=none] (19) at (-2.25, 0.25) {};
		\node [style=none] (20) at (0, 3) {N};
		\node [style=none] (21) at (0, 3) {N};
		\node [style=none] (22) at (-2, 2) {N};
		\node [style=none] (23) at (2, 2) {N};
		\node [style=none] (24) at (-3, 0) {W};
		\node [style=none] (25) at (3, 0) {E};
		\node [style=none] (26) at (2, -2) {S};
		\node [style=none] (27) at (0, -3) {S};
		\node [style=none] (28) at (-2, -2) {S};
%		\node [style=none] (29) at (-1.5, -1.5) {X};
	\end{pgfonlayer}
	\begin{pgfonlayer}{edgelayer}
		\draw [thick] (7.center) to (2.center);
		\draw [ultra thick, color=white] (4.center) to (5.center);
		\draw [ultra thick, color=white] (8.center) to (11.center);
%		\draw [thick] (7.center) to (6.center);
		\draw [thick] (14.center) to (17.center);
		\draw [ultra thick, color=white] (19.center) to (15.center);
		\draw [ultra thick, color=white] (13.center) to (16.center);
%		\draw [thick] (14.center) to (18.center);
		\draw [thick] (1.center) to (3.center);
		\draw [thick] (10.center) to (9.center);
%		\draw [thick, dashed, color=gray] (0,0) circle [radius=4];
	\end{pgfonlayer}
\end{tikzpicture}\; .
\end{equation}
The labels N, S, E, W are for later reference and can be ignored for now.

\section{Jimbo--Schur--Weyl Duality and Tubbenhauer--Vaz--Wedrich Diagrams}\label{S:SWDualityandTVWWebs}

\subsection{Jimbo--Schur--Weyl duality}  Since $W^{(n+r,r)}$ is a right $\HH_{d}(q)$-module with the same dimension as $S^{(n+r,r)}$, it will come as little surprise that they are isomorphic.  Proving this claim is the goal of this section.  First we record some well-known results (e.g., see \cite{CP, Kassel}).

Let $U_{q}(\gl_{2})$ be the quantized enveloping algebra defined over $\k$ which is associated to $\gl_{2}(\mathbb{C})$.  Let $V = \k^{2}$ be the natural module for $U_{q}(\gl_{2})$ with $\{ v_{0}, v_{1}\}$ the standard basis.  Define the linear map $\beta: V \otimes V \to V \otimes V$ by 
\[
\beta(v_{a} \otimes v_{b}) = \begin{cases} q v_{a} \otimes v_{b}, & \text{if $a=b$};\\
                                            v_{b}\otimes v_{a}, & \text{if $a <b$};\\
                                            v_{b} \otimes v_{a} + (q-q^{-1})v_{a}\otimes v_{b}, & \text{if $a > b$}.
\end{cases}
\]  This is the $U_{q}(\gl_{2})$-linear endomorphism defined by the $R$-matrix (e.g., see \cite[VIII.1]{Kassel}) and satisfies the equation $(\beta-q)(\beta+q^{-1})=0$.  Given $\lambda \in \Lambda_{+}(2)$, let $L_{q}(\lambda)$ be the simple $U_{q}(\gl_{2})$-module indexed by $\lambda$.  Let $S_{q}^{a}(V)$ (resp., $\Lambda_{q}^{a}(V)$) be the $a$th symmetric power (resp., exterior power)  of $V$ for any $a \geq 0$. For example, $S_{q}^{2}(V) \cong L_{q}(2,0)$ and can be realized as the $q$-eigenspace of the endomorphism $\beta$.

Since $U_{q}(\gl_{2})$ is a Hopf algebra, $V^{\otimes d}$ is a left $U_{q}(\gl_{2})$-module.  Jimbo--Schur--Weyl duality states that for every $d \geq 0$ there is a surjective algebra homomorphism 
\[
\Phi_{d}: \HH_{d}(q) \to \End_{U_{q}(\gl_{2})}\left(V^{\otimes d} \right)
\] defined by $\Phi_{d}(T_{i}) = 1_{V}^{\otimes (i-1)} \otimes \beta   \otimes 1_{V}^{\otimes (d-i-1)}$. Consequently, for any left $U_{q}(\gl_{2})$-module, $M$, $\Hom_{U_{q}(\gl_{2})}(V^{\otimes d}, M)$ is a right $\HH_{d}(q)$-module with the action of $x \in \HH_{d}(q)$ given by pre-composition by $\Phi_{d}(x)$.   In particular, as explained in \cite[Remark 2.1(ii), Proposition 4.5.8]{Donkin}, for every $\lambda \in \Lambda_{+}(2,d)$ there is an isomorphism of right $\HH_{d}(q)$-modules:
\begin{equation}\label{E:DonkinIsom}
\Hom_{U_{q}(\gl_{2})}\left(V^{\otimes d}, L_{q}(\lambda) \right) \cong S^{\lambda}.
\end{equation}

\subsection{Tubbenhauer--Vaz--Wedrich diagrams}\label{SS:TVW-webs}

Let $\rgWeb$ be the green-red diagrammatic category for $U_{q}(\mathfrak{gl}_{2})$ given by \cite[Definition 2.5]{TVW}.  Briefly, this is a $\k$-linear monoidal category which is defined by generators and relations.  The objects are words from the set $\left\{\textcolor{red}{a}, \textcolor{green}{b}, 1 \mid a,b \in \mathbb{Z}_{\geq 1} \right\}$.  The morphisms are certain diagrams generated by the following \textcolor{green}{green} and \textcolor{red}{red} ``merge'' morphisms $ab \to a+b$  and ``split'' morphisms $a+b \to ab$  (for all $a,b \in \Z_{\geq 0}$):

\begin{equation*}
\begin{tikzpicture}[baseline=0ex,scale=.65, color=\clr ]
	\begin{pgfonlayer}{nodelayer}
		\node [style=none] (0) at (-2, -1) {};
		\node [style=none] (10) at (-1.5, 0) {};
		\node [style=none] (11) at (-1.5, 1) {};
		\node [style=none] (12) at (-1, -1) {};
%		\node [style=none] (13) at (1, 1) {13};
%		\node [style=none] (14) at (1.5, 0) {14};
%		\node [style=none] (15) at (1.5, -1) {15};
%		\node [style=none] (16) at (2, 1) {16};
		\node [style=none] (20) at (-1.5, 1.5) {\tiny{$a+b$}};
%		\node [style=none] (21) at (1.5, -1.5) {\tiny{a+b}};
%		\node [style=none] (22) at (1, 1.5) {\tiny{a}};
%		\node [style=none] (23) at (2, 1.5) {\tiny{b}};
		\node [style=none] (24) at (-2, -1.5) {\tiny{$a$}};
		\node [style=none] (25) at (-1, -1.5) {\tiny{$b$}};
	\end{pgfonlayer}
	\begin{pgfonlayer}{edgelayer}
		\draw [thick, dashed, green] [out=90,in=0] (12.center) to (10.center);
		\draw [thick, dashed, green] [out=90,in=180] (0.center) to (10.center);
%		\draw [thick, dashed, green] [out=270,in=0] (16.center) to (14.center);
%		\draw [thick, dashed, green] [out=270,in=180] (13.center) to (14.center);
%		\draw [thick, dashed, green] (14.center) to (15.center);
		\draw [thick, dashed, green] (10.center) to (11.center);
	\end{pgfonlayer}
\end{tikzpicture}\; \; \; \; , \; \; \; \; 
\begin{tikzpicture}[baseline=0ex,scale=.65, color=\clr ]
	\begin{pgfonlayer}{nodelayer}
		\node [style=none] (0) at (-2, -1) {};
		\node [style=none] (10) at (-1.5, 0) {};
		\node [style=none] (11) at (-1.5, 1) {};
		\node [style=none] (12) at (-1, -1) {};
%		\node [style=none] (13) at (1, 1) {13};
%		\node [style=none] (14) at (1.5, 0) {14};
%		\node [style=none] (15) at (1.5, -1) {15};
%		\node [style=none] (16) at (2, 1) {16};
		\node [style=none] (20) at (-1.5, 1.5) {\tiny{$a+b$}};
%		\node [style=none] (21) at (1.5, -1.5) {\tiny{a+b}};
%		\node [style=none] (22) at (1, 1.5) {\tiny{a}};
%		\node [style=none] (23) at (2, 1.5) {\tiny{b}};
		\node [style=none] (24) at (-2, -1.5) {\tiny{$a$}};
		\node [style=none] (25) at (-1, -1.5) {\tiny{$b$}};
	\end{pgfonlayer}
	\begin{pgfonlayer}{edgelayer}
		\draw [thick, densely dashdotted, red]  [out=90,in=0] (12.center) to (10.center);
		\draw [thick, densely dashdotted, red]  [out=90,in=180] (0.center) to (10.center);
%		\draw [thick, dashed, green] [out=270,in=0] (16.center) to (14.center);
%		\draw [thick, dashed, green] [out=270,in=180] (13.center) to (14.center);
%		\draw [thick, dashed, green] (14.center) to (15.center);
		\draw [thick, densely dashdotted, red]  (10.center) to (11.center);
	\end{pgfonlayer}
\end{tikzpicture}\; \; \; \; , \; \; \; \; 
\begin{tikzpicture}[baseline=0ex,scale=.65, color=\clr ]
	\begin{pgfonlayer}{nodelayer}
		\node [style=none] (0) at (-2, 1) {};
		\node [style=none] (10) at (-1.5, 0) {};
		\node [style=none] (11) at (-1.5, -1) {};
		\node [style=none] (12) at (-1, 1) {};
%		\node [style=none] (13) at (1, 1) {13};
%		\node [style=none] (14) at (1.5, 0) {14};
%		\node [style=none] (15) at (1.5, -1) {15};
%		\node [style=none] (16) at (2, 1) {16};
		\node [style=none] (20) at (-1.5, -1.5) {\tiny{$a+b$}};
%		\node [style=none] (21) at (1.5, -1.5) {\tiny{a+b}};
%		\node [style=none] (22) at (1, 1.5) {\tiny{a}};
%		\node [style=none] (23) at (2, 1.5) {\tiny{b}};
		\node [style=none] (24) at (-2, 1.5) {\tiny{$a$}};
		\node [style=none] (25) at (-1, 1.5) {\tiny{$b$}};
	\end{pgfonlayer}
	\begin{pgfonlayer}{edgelayer}
		\draw [thick, dashed, green] [out=270,in=0] (12.center) to (10.center);
		\draw [thick, dashed, green] [out=270,in=180] (0.center) to (10.center);
%		\draw [thick, dashed, green] [out=270,in=0] (16.center) to (14.center);
%		\draw [thick, dashed, green] [out=270,in=180] (13.center) to (14.center);
%		\draw [thick, dashed, green] (14.center) to (15.center);
		\draw [thick, dashed, green] (10.center) to (11.center);
	\end{pgfonlayer}
\end{tikzpicture}\; \; \; \; , \; \; \; \; 
\begin{tikzpicture}[baseline=0ex,scale=.65, color=\clr ]
	\begin{pgfonlayer}{nodelayer}
		\node [style=none] (0) at (-2, 1) {};
		\node [style=none] (10) at (-1.5, 0) {};
		\node [style=none] (11) at (-1.5, -1) {};
		\node [style=none] (12) at (-1, 1) {};
%		\node [style=none] (13) at (1, 1) {13};
%		\node [style=none] (14) at (1.5, 0) {14};
%		\node [style=none] (15) at (1.5, -1) {15};
%		\node [style=none] (16) at (2, 1) {16};
		\node [style=none] (20) at (-1.5, -1.5) {\tiny{$a+b$}};
%		\node [style=none] (21) at (1.5, -1.5) {\tiny{a+b}};
%		\node [style=none] (22) at (1, 1.5) {\tiny{a}};
%		\node [style=none] (23) at (2, 1.5) {\tiny{b}};
		\node [style=none] (24) at (-2, 1.5) {\tiny{$a$}};
		\node [style=none] (25) at (-1, 1.5) {\tiny{$b$}};
	\end{pgfonlayer}
	\begin{pgfonlayer}{edgelayer}
		\draw [thick, densely dashdotted, red]  [out=270,in=0] (12.center) to (10.center);
		\draw [thick, densely dashdotted, red]  [out=270,in=180] (0.center) to (10.center);
%		\draw [thick, dashed, green] [out=270,in=0] (16.center) to (14.center);
%		\draw [thick, dashed, green] [out=270,in=180] (13.center) to (14.center);
%		\draw [thick, dashed, green] (14.center) to (15.center);
		\draw [thick, densely dashdotted, red]  (10.center) to (11.center);
	\end{pgfonlayer}
\end{tikzpicture} \; \; .
\end{equation*}
Observe that we use dashed lines ($\begin{tikzpicture}[baseline=-0.5ex,scale=.65, color=\clr] 		\node [style=none] (10) at (-0.5, 0) {};
		\node [style=none] (11) at (0.5, 0) {}; 		\draw [thick, dashed, green]  (10.center) to (11.center);
 \end{tikzpicture}$) for green diagrams and densely dashdotted lines ($\begin{tikzpicture}[baseline=-0.5ex,scale=.65, color=\clr] 		\node [style=none] (10) at (-0.5, 0) {};
		\node [style=none] (11) at (0.5, 0) {}; 		\draw [thick, densely dashdotted, red]  (10.center) to (11.center);
 \end{tikzpicture}$) for red diagrams to help distinguish them.
 
Also note that diagrams are read bottom-to-top.  The identity of an object is parallel vertical strands sequentially labeled by the letters which make up the object word.  We declare $\textcolor{red}{1}=1=\textcolor{green}{1}$.  While edges in a diagram labeled by a $1$ are usually drawn as black, they can be considered either red or green as needed in order to make a valid diagram.  As above, edges in a diagram are labeled by the generating object which is the (co)domain of the relevant generating morphism. Unmarked edges are understood to have label $1$.  Composition is given by vertical concatenation and the monoidal product is given by horizontal concatenation.  Therefore, a general morphism will be a $\k$-linear combination of diagrams made by finitely many such concatenations of the generating diagrams.  The morphisms are subject to various local relations, including that any diagram which contains a green edge labeled by $\textcolor{green}{k} \geq 3$ is equal to zero.

For example, for each $i = 1, \dotsc , d-1$, there is a morphism $\beta_{i}: 1^{\otimes d} \to 1^{\otimes d}$ in $\rgWeb$ given by the following linear combination of diagrams:
\begin{equation*}
\beta_{i} = \beta_{i}^{d} = 
q \; \begin{tikzpicture}[baseline=0ex,scale=.65, color=\clr  ]
	\begin{pgfonlayer}{nodelayer}
		\node [style=none] (0) at (-3.5, -1) {};
		\node [style=none] (1) at (-5, 1) {};
		\node [style=none] (2) at (-5, -1) {};
		\node [style=none] (3) at (-2.84, 1) {};
		\node [style=none] (4) at (0, 1) {};
		\node [style=none] (5) at (0, -1.5) {\tiny{$1$}};
		\node [style=none] (6) at (-1.5, -1.5) {\tiny{$1$}};
		\node [style=none] (7) at (-2.84, -1.5) {\tiny{$1$}};
		\node [style=none] (8) at (-2.16, 1) {};
		\node [style=none] (9) at (-5, -1.5) {\tiny{$1$}};
		\node [style=none] (10) at (-3.5, -1.5) {\tiny{$1$}};
		\node [style=none] (11) at (-2.16, -1.5) {\tiny{$1$}};
		\node [style=none] (12) at (-1.5, -1) {};
		\node [style=none] (13) at (0, -1) {};
		\node [style=none] (14) at (-1.5, 1) {};
		\node [style=none] (15) at (-3.5, 1) {};
		\node [style=none] (16) at (-2.84, -1) {};
		\node [style=none] (17) at (-2.16, -1) {};
		\node [style=none] (18) at (-4.25, 0) {\tiny{\dots}};
		\node [style=none] (19) at (-0.75, 0) {\tiny{\dots}};
		\node [style=none] (20) at (-2.5, 0.55) {};
		\node [style=none] (21) at (-2.5, -0.55) {};
	\end{pgfonlayer}
	\begin{pgfonlayer}{edgelayer}
		\draw [thick] (3.center) to (16.center);
		\draw [thick](2.center) to (1.center);
		\draw [thick] (8.center) to (17.center);
		\draw [thick] (14.center) to (12.center);
		\draw [thick] (4.center) to (13.center);
		\draw [thick] (15.center) to (0.center);
%		\draw [thick, color=green] (20.center) to (21.center);
	\end{pgfonlayer}
\end{tikzpicture}
\; \; - \; \; 
\begin{tikzpicture}[baseline=0ex,scale=.65, color=\clr ]
	\begin{pgfonlayer}{nodelayer}
		\node [style=none] (0) at (-3.5, -1) {};
		\node [style=none] (1) at (-5, 1) {};
		\node [style=none] (2) at (-5, -1) {};
		\node [style=none] (3) at (-3, 1) {};
		\node [style=none] (4) at (0, 1) {};
		\node [style=none] (5) at (0, -1.5) {\tiny{$1$}};
		\node [style=none] (6) at (-1.5, -1.5)  {\tiny{$1$}};
		\node [style=none] (7) at (-3, -1.5) {\tiny{$1$}};
		\node [style=none] (8) at (-2, 1) {};
		\node [style=none] (9) at (-5, -1.5) {\tiny{$1$}};
		\node [style=none] (10) at (-3.5, -1.5) {\tiny{$1$}};
		\node [style=none] (11) at (-2, -1.5) {\tiny{$1$}};
		\node [style=none] (12) at (-1.5, -1) {};
		\node [style=none] (13) at (0, -1) {};
		\node [style=none] (14) at (-1.5, 1) {};
		\node [style=none] (15) at (-3.5, 1) {};
		\node [style=none] (16) at (-3, -1) {};
		\node [style=none] (17) at (-2, -1) {};
		\node [style=none] (18) at (-4.25, 0) {\tiny{\dots}};
		\node [style=none] (19) at (-0.75, 0) {\tiny{\dots}};
		\node [style=none] (20) at (-2.5, 0.55) {};
		\node [style=none] (21) at (-2.5, -0.55) {};
		\node [style=none] (22) at (-2.25, 0) {\tiny{\textcolor{green}{2}}};

	\end{pgfonlayer}
	\begin{pgfonlayer}{edgelayer}
		\draw [thick, looseness=1.5, dashed, green] [out=270, in=270] (3.center) to (8.center);
		\draw [thick] (2.center) to (1.center);
		\draw [thick, looseness=1.5, dashed, green] [out=90, in=90] (16.center) to (17.center);
		\draw [thick] (14.center) to (12.center);
		\draw [thick] (4.center) to (13.center);
		\draw [thick] (15.center) to (0.center);
		\draw [thick, dashed, green] (20.center) to (21.center);
	\end{pgfonlayer}
\end{tikzpicture}\; ,
\end{equation*} where the first diagram has $d$ black strands, and the second diagram has $d-2$ black strands and the green merge and split are in the $i$ and $i+1$ positions.

Recall the generators and relations defining the Hecke algebra $\HH_{d}(q)$ as given in \cref{R:Heckeisom}. By \cite[Lemma 2.25]{TVW}, there is an algebra homomorphism 
\begin{align}\label{D:zetahomom}
\zeta_{d}: \HH_{d}(q) &\to \End_{\rgWeb}\left(1^{d} \right),\\
T_{i} &\mapsto \beta_{i}.
\end{align}
In particular, there is a right action of $\HH_{d}(q)$ on the $\k$-vector space $\Hom_{\rgWeb}\left(1^{d},\textcolor{green}{2}^{n} \textcolor{red}{r} \right)$ for any $r,n \in \Z_{\geq 0}$ where $T_{i}$ acts by pre-composition by $\zeta_{d}(T_{i})$.

In \cite[Definition 3.17]{TVW} the authors define a functor of monoidal categories, 
\[
\Gamma: \rgWeb \to U_{q}(\gl_{2})\text{-modules}.
\]
  On objects $1 \mapsto V$, $\textcolor{red}{a}\mapsto S^{a}_{q}(V)$, and $\textcolor{green}{b} \mapsto \Lambda_{q}^{b}(V)$.  Of particular relevance to this paper, 
\[
1^{d} \mapsto V^{\otimes d}
\]
and
\[
\textcolor{green}{2}^{n}\textcolor{red}{r} \mapsto  \Lambda_{q}^{2}(V)^{\otimes n} \otimes S^{r}_{q}(V) \cong  L_{q}(n+r,n).
\]
The isomorphism is well-known and holds, for instance, from the fact both modules are semisimple polynomial representations of $U_{q}(\gl_{2})$ along with character considerations.

By  \cite[Theorem 3.20]{TVW}, the functor $\Gamma$ is fully faithful and so for all $d,n,r \geq 0$ defines a vector space isomorphism,
\begin{equation}\label{E:GammaIsom}
\Gamma:\Hom_{\rgWeb}\left(1^{d}, \textcolor{green}{2}^{n} \textcolor{red}{r} \right) \xrightarrow{\cong} \Hom_{U_{q}(\gl_{2})}\left(V^{\otimes d}, L_{q}(n+r,n)  \right). 
\end{equation} Furthermore, as explained in the proof of \cite[Lemma 2.25]{TVW}, the map given by $\Gamma$ is an isomorphism of right $\HH_{d}(q)$-modules (where the action of $\HH_{d}(q)$ on the latter space is given by Jimbo--Schur--Weyl duality as in the previous section).  In summary, if $d=2n+r$, then this discussion along with \cref{E:DonkinIsom} implies
\begin{equation}\label{E:TVWWebSpechtIsom}
\Hom_{\rgWeb}\left(1^{d}, \textcolor{green}{2}^{n} \textcolor{red}{r} \right) \cong S^{(n+r,n)}
\end{equation}
as right $\HH_{d}(q)$-modules.

\begin{prop}\label{T:TVW}  For all nonnegative integers $n$ and $r$, there is an isomorphism of right $\HH_{2n+r}(q)$-modules
\[
\varphi: S^{(n+r,n)} \xrightarrow{\cong} W^{(n+r,n)}.
\]  Furthermore, by rescaling if necessary, $\varphi(v_{0}) = w_{0}$.
\end{prop}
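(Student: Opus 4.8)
The plan is to realize $W^{(n+r,n)}$ inside the green--red morphism space $\Hom_{\rgWeb}(1^{d},\textcolor{green}{2}^{n}\textcolor{red}{r})$ (with $d=2n+r$), transport the isomorphism \cref{E:TVWWebSpechtIsom}, and then pin down the scalar using \cref{L:KeyLemma}. Concretely, I would first define a $\k$-linear map $\mu\colon W^{(n+r,n)}\to\Hom_{\rgWeb}(1^{d},\textcolor{green}{2}^{n}\textcolor{red}{r})$ on the basis $\WW^{(n+r,n)}$: given a non-crossing web $w$, straighten it so that its $r$ through-strings all exit at the right-hand boundary in the unique planar way, then turn each arc of $w$ into a green merge onto one of the $n$ strands labelled $\textcolor{green}{2}$, and feed the $r$ straightened through-strings into a cascade of red merges producing the single strand labelled $\textcolor{red}{r}$. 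Since isotopic webs are already identified (\cref{D:Webs}), this is a well-defined assignment on $\WW^{(n+r,n)}$, and we extend $\k$-linearly.

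The crucial step is to show $\mu$ is a homomorphism of right $\HH_{d}(q)$-modules. I would use the description from \cref{SS:bubbles and crossings} that $wT_{i}=w\beta_{i}$, together with the observation that, under $\mu$, the operation ``concatenate $\beta_{i}$ along the bottom of a web'' corresponds exactly to precomposition with the morphism $\beta_{i}\in\End_{\rgWeb}(1^{d})$: the crossing resolution \cref{E:crossingrelation}, the bubble evaluation \cref{E:bubblerelation}, and the convention that a diagram with an edge joining two primed vertices vanishes are precisely the relations in $\rgWeb$ needed to rewrite $w\beta_{i}$ as a $\k$-combination of non-crossing webs. Since $T_{i}$ acts on $\Hom_{\rgWeb}(1^{d},\textcolor{green}{2}^{n}\textcolor{red}{r})$ by precomposition with $\zeta_{d}(T_{i})=\beta_{i}$ (\cref{D:zetahomom}), this yields $\mu(wT_{i})=\mu(w)\circ\zeta_{d}(T_{i})$ for every web $w$ and every $i$, so $\mu$ is $\HH_{d}(q)$-linear. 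Making this dictionary between the combinatorial web action and composition in $\rgWeb$ fully precise --- in particular, the bookkeeping of the through-strings and of the exact target word $\textcolor{green}{2}^{n}\textcolor{red}{r}$ --- is the step I expect to demand the most care; everything after it is soft.

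Finally I would check bijectivity and fix the normalization. By \cref{E:TVWWebSpechtIsom} the target of $\mu$ is isomorphic to $S^{(n+r,n)}$, which is a simple $\HH_{d}(q)$-module, so any nonzero $\HH_{d}(q)$-map into it is surjective; and $\mu\neq0$ because $\mu(w_{0})$ is the diagram built from $n$ green merges on consecutive pairs together with one red cascade, which is nonzero --- under the functor $\Gamma$ of \cref{SS:TVW-webs} it is the canonical surjection $V^{\otimes d}\twoheadrightarrow\Lambda_{q}^{2}(V)^{\otimes n}\otimes S^{r}_{q}(V)$. By \cref{L:WebTableauxBijection}, transposition of standard tableaux, \cref{E:standardbasis}, and \cref{E:TVWWebSpechtIsom} we have
\[
\dim_{\k}W^{(n+r,n)}=\bigl|\WW^{(n+r,n)}\bigr|=\bigl|\Std(n+r,n)\bigr|=\bigl|\Std\bigl((n+r,n)^{\TT}\bigr)\bigr|=\dim_{\k}S^{(n+r,n)}=\dim_{\k}\Hom_{\rgWeb}(1^{d},\textcolor{green}{2}^{n}\textcolor{red}{r}),
\]
so the surjection $\mu$ is an isomorphism, and composing with the inverse of \cref{E:TVWWebSpechtIsom} produces an isomorphism of right $\HH_{d}(q)$-modules $\varphi\colon S^{(n+r,n)}\xrightarrow{\ \cong\ }W^{(n+r,n)}$. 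For the normalization, note that in $w_{0}$ the vertices $2k-1$ and $2k$ are joined by an arc for $k=1,\dots,n$, so case (4) of the case analysis in \cref{SS:partialorderonwebs} gives $w_{0}E_{2k-1}=-[2]_{q}w_{0}$, i.e.\ $w_{0}T_{2k-1}=-q^{-1}w_{0}$; since $\lambda^{\TT}=(2^{n},1^{r})$ has Young subgroup generated by the commuting transpositions $s_{1},s_{3},\dots,s_{2n-1}$, we get $y_{\lambda^{\TT}}=\prod_{k=1}^{n}\bigl(1-q^{-1}T_{2k-1}\bigr)$ and hence $w_{0}y_{\lambda^{\TT}}=(1+q^{-2})^{n}w_{0}$, a nonzero scalar multiple of $w_{0}$. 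Transporting \cref{L:KeyLemma} through $\varphi$ and using \cref{E:v0def}, this forces $\varphi(v_{0})\in\k^{\times}w_{0}$, so rescaling $\varphi$ by that scalar yields $\varphi(v_{0})=w_{0}$.
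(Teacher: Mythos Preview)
Your overall strategy and your normalization argument are exactly the paper's: realize the Specht module via \cref{E:TVWWebSpechtIsom}, build a nonzero $\HH_{d}(q)$-map between $W^{(n+r,n)}$ and the green--red Hom space, invoke simplicity and the dimension count from \cref{L:WebTableauxBijection}, and then use $w_{0}y_{\lambda^{\TT}}=(1+q^{-2})^{n}w_{0}$ together with \cref{L:KeyLemma} to pin down the scalar. That last computation is correct as written.

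The genuine gap is in the definition of $\mu$. Your claim that the through-strings of an arbitrary $w\in\WW^{(n+r,n)}$ can be ``straightened to exit at the right in the unique planar way'' so that the arcs feed into the leftmost $n$ positions of the fixed target $\textcolor{green}{2}^{n}\textcolor{red}{r}$ is false. Take $n=r=1$ and the web with through-string $\langle 1,1'\rangle$ and arc $\langle 2,3\rangle$. Your recipe asks for a diagram $1^{3}\to\textcolor{green}{2}\,\textcolor{red}{1}$ in which positions $2,3$ merge to the \emph{first} top slot (the $\textcolor{green}{2}$) while position $1$ is routed to the \emph{second} top slot (the $\textcolor{red}{1}$); these two strands must cross, and $\rgWeb$ has no crossing generator to absorb this. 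More generally, whenever a through-string's $L$-vertex lies to the left of some arc, $\mu(w)$ cannot be drawn planarly with that fixed target word. So $\mu$ is not defined on the basis, and the equivariance discussion that follows has nothing to stand on. You correctly flag the ``bookkeeping of the through-strings and of the exact target word'' as the delicate point, but the obstruction is already at the level of defining $\mu(w)$, not only at the level of checking $\mu(wT_{i})=\mu(w)\circ\beta_{i}$.

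The paper avoids this by reversing the direction of the map. It defines $\eta\colon\Hom_{\rgWeb}(1^{d},\textcolor{green}{2}^{n}\textcolor{red}{r})\to W^{(n+r,n)}$ on a spanning set of green--red diagrams of a fixed shape (green merges and a red cascade sitting atop an arbitrary green endomorphism of $1^{d}$), via the local rules ``green merge $\leadsto$ cup'', ``green split $\leadsto -\,$cap'', delete the red part. Well-definedness then reduces to checking the green relations of $\rgWeb$, which is routine; $\HH_{d}(q)$-equivariance splits into the easy case and the two-through-strings case, the latter handled by showing the resulting green--red diamond vanishes in $\rgWeb$. Once $\eta$ is in place, the rest is exactly your argument. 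The moral: because $\rgWeb$ is presented by generators and relations, maps \emph{out} of its Hom spaces are checked by relations, whereas maps \emph{into} them require producing specific diagrams --- which is where your planarity problem bites.
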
  Here $v_{0}=v_{t_{0}}=z_{\lambda}$ is as in \cref{E:v0def} and $w_{0}$ is the $(n+r,n)$-web given in \cref{E:w0definition}.
We refer the reader to \cref{SS:ExampleII} and \cref{SS:ExampleIII} for examples of the matrix for $\varphi$ when $\lambda = (3,3)$ and $\lambda = (4,2)$, respectively.

\begin{proof} Put $\lambda=(n+r,n)$ and $d=2n+r$.  First we prove the existence of the isomorphism.  Combining \cref{E:TVWWebSpechtIsom,L:WebTableauxBijection} along with fact that $S^{\lambda}$ is a simple $\HH_{d}(q)$-module, it suffices to define a nonzero $\HH_{d}(q)$-module homomorphism 
\[
\eta: \Hom_{\rgWeb}\left(1^{d}, \textcolor{green}{2}^{n} \textcolor{red}{r}\right) \to W^{(n+r,n)}.
\] Consider the subspace of $\Hom_{\rgWeb}\left(1^{d}, \textcolor{green}{2}^{n} \textcolor{red}{r}\right)$ spanned by all diagrams of the form:
\begin{equation}\label{E:Diagram} D \;\; =\;\;
\begin{tikzpicture}[baseline=0ex,scale=.5, color=\clr ]
	\begin{pgfonlayer}{nodelayer}
		\node [style=none] (0) at (-3, 2) {};
		\node [style=none] (1) at (-3, 0) {};
		\node [style=none] (2) at (4, 2) {};
		\node [style=none] (3) at (4, 0) {};
		\node [style=none] (4) at (4, -1) {};
		\node [style=none] (5) at (3, 0) {};
		\node [style=none] (6) at (3, -1) {};
		\node [style=none] (7) at (-2, 0) {};
		\node [style=none] (8) at (-2, -1) {};
		\node [style=none] (9) at (-3, -1) {};
		\node [style=none] (10) at (-2.5, 3) {};
		\node [style=none] (11) at (-2.5, 4) {};
		\node [style=none] (12) at (-2, 2) {};
		\node [style=none] (13) at (0, 2) {};
		\node [style=none] (14) at (0.5, 3) {};
		\node [style=none] (15) at (0.5, 4) {};
		\node [style=none] (16) at (1, 2) {};
		\node [style=none] (17) at (2, 2) {};
		\node [style=none] (18) at (3, 3) {};
		\node [style=none] (19) at (3, 4) {};
		\node [style=none] (20) at (-2.5, 4.5) {\textcolor{green}{\tiny{2}}};
		\node [style=none] (21) at (0.5, 4.5) {\textcolor{green}{\tiny{2}}};
		\node [style=none] (22) at (3, 4.5) {\textcolor{red}{\tiny{r}}};
		\node [style=none] (23) at (-3, -1.5) {\tiny{1}};
		\node [style=none] (24) at (-2, -1.5) {\tiny{1}};
		\node [style=none] (25) at (3, -1.5) {\tiny{1}};
		\node [style=none] (26) at (4, -1.5) {\tiny{1}};
		\node [style=none] (27) at (0.5, -0.5) {};
		\node [style=none] (28) at (0.5, -0.5) {};
		\node [style=none] (29) at (0.5, -0.75) {\tiny{\dots}};
		\node [style=none] (30) at (-1, 3) {\tiny{\dots}};
		\node [style=none] (31) at (0.5, 1) {$X$};
		\node [style=none] (32) at (3, 2.5) {\tiny{\dots}};
	\end{pgfonlayer}
	\begin{pgfonlayer}{edgelayer}
		\draw [style=TB Edge] (0.center) to (1.center);
		\draw [style=TB Edge] (1.center) to (3.center);
		\draw [style=TB Edge] (3.center) to (2.center);
		\draw [style=TB Edge] (2.center) to (0.center);
		\draw [style=TB Edge] (1.center) to (9.center);
		\draw [style=TB Edge] (7.center) to (8.center);
		\draw [style=TB Edge] (5.center) to (6.center);
		\draw [style=TB Edge] (3.center) to (4.center);
		\draw [thick, dashed, green] [out=90,in=0] (12.center) to (10.center);
		\draw [thick, dashed, green] [out=90, in=180] (0.center) to (10.center);
		\draw [thick, dashed, green] [out=90, in=0] (16.center) to (14.center);
		\draw [thick, dashed, green] [out=90,in=180] (13.center) to (14.center);
		\draw [thick, dashed, green] (14.center) to (15.center);
		\draw [thick, dashed, green] (10.center) to (11.center);
		\draw [thick, densely dashdotted, densely dashdotted, red]  [out=90,in=180] (17.center) to (18.center);
		\draw [thick, densely dashdotted, densely dashdotted, red]  [out=90,in=0] (2.center) to (18.center);
		\draw [thick, densely dashdotted, densely dashdotted, red]  (18.center) to (19.center);
	\end{pgfonlayer}
\end{tikzpicture}\; ,
\end{equation}
where $X$ is any diagram in $\End_{\rgWeb}(1^{\otimes d})$ obtained by vertically and horizontally concatenating green merges, splits, and identities (recall that an edge labeled by $1$ is considered to be green), and where the red diagram is $r$ strands labeled by $1$ that are successively merged together from left-to-right by concatenating red merges.  Note that since green edges labeled by $\textcolor{green}{k} \geq 3$ do not appear in $\rgWeb$, this subspace is spanned by diagrams of this form where the $X$ consists of only green edges labeled by $1$'s and $\textcolor{green}{2}$'s.

Since this is a nonzero subspace of $\Hom_{\rgWeb}\left(1^{d}, \textcolor{green}{2}^{n} \textcolor{red}{r}\right)$ and is obviously stable under the right action of $\HH_{d}(q)$, it must equal $\Hom_{\rgWeb}\left(1^{d}, \textcolor{green}{2}^{n} \textcolor{red}{r}\right)$ by simplicity.  Thus it suffices to define $\eta$ on such diagrams. On a diagram $D$ as described above, $\eta(D)$ is defined by performing the following sequence of operations on $D$:
\begin{itemize}
\item Delete the red merges.
\item Make the following local substitutions for the green merge and split diagrams in $X$:
\begin{equation*}
\begin{tikzpicture}[baseline=0ex,scale=.65, color=\clr ]
	\begin{pgfonlayer}{nodelayer}
		\node [style=none] (0) at (-2, -1) {};
		\node [style=none] (10) at (-1.5, 0) {};
		\node [style=none] (11) at (-1.5, 1) {};
		\node [style=none] (12) at (-1, -1) {};
		\node [style=none] (13) at (1, -0.25) {};
		\node [style=none] (16) at (2, -0.25) {};
		\node [style=none] (20) at (-1.5, 1.5) {\textcolor{green}{\tiny{$2$}}};
		\node [style=none] (24) at (-2, -1.5) {\tiny{$1$}};
		\node [style=none] (25) at (-1, -1.5) {\tiny{$1$}};
		\node [style=none] (26) at (0, 0) {$\leadsto$};
	\end{pgfonlayer}
	\begin{pgfonlayer}{edgelayer}
		\draw [thick, dashed, green] [out=90, in=0] (12.center) to (10.center);
		\draw [thick, dashed, green] [out=90,in=180] (0.center) to (10.center);
		\draw [thick, dashed, green] (10.center) to (11.center);
		\draw [thick, looseness=1.5] [out=90,in=90] (13.center) to (16.center);
	\end{pgfonlayer}
\end{tikzpicture}\; \; \; \; \text{ and } \; \; \; \; 
\begin{tikzpicture}[baseline=0ex,scale=.65, color=\clr ]
	\begin{pgfonlayer}{nodelayer}
		\node [style=none] (0) at (-2, 1) {};
		\node [style=none] (10) at (-1.5, 0) {};
		\node [style=none] (11) at (-1.5, -1) {};
		\node [style=none] (12) at (-1, 1) {};
		\node [style=none] (13) at (1.5, 0.25) {};
		\node [style=none] (16) at (2.5, 0.25) {};
		\node [style=none] (20) at (-1.5, -1.5) {\textcolor{green}{\tiny{$2$}}};
		\node [style=none] (24) at (-2, 1.5) {\tiny{$1$}};
		\node [style=none] (25) at (-1, 1.5) {\tiny{$1$}};
		\node [style=none] (26) at (0, 0) {$\leadsto$};
		\node [style=none] (27) at (1, 0) {$-$};
	\end{pgfonlayer}
	\begin{pgfonlayer}{edgelayer}
		\draw [thick, dashed, green] [out=270, in=0] (12.center) to (10.center);
		\draw [thick, dashed, green] [out=270,in=180] (0.center) to (10.center);
		\draw [thick, dashed, green] (10.center) to (11.center);
		\draw [thick, looseness=1.5] [out=270,in=270] (13.center) to (16.center);
	\end{pgfonlayer}
\end{tikzpicture}\;.
\end{equation*}
\item Turn all edges black and delete all edge labels. 
\item For each connected component which is not connected to vertex along the top or bottom edge, delete the component and scale the diagram by $-[2]_{q}$.
\end{itemize}

The resulting diagram will be a scalar multiple of an $(n+r,n)$-web.  For example, $\eta$ acts on the following diagram as indicated:
\begin{equation}\label{E:etaexample}
\begin{tikzpicture}[baseline=-2ex,scale=.4, color=\clr ]
	\begin{pgfonlayer}{nodelayer}
		\node [style=none] (0) at (3.75, 2) {};
		\node [style=none] (1) at (3.75, -1) {};
		\node [style=none] (3) at (-2, -1) {};
		\node [style=none] (4) at (-2, -3.5) {};
		\node [style=none] (5) at (3, -1) {};
		\node [style=none] (6) at (3, -3.5) {};
		\node [style=none] (7) at (4.75, -1) {};
		\node [style=none] (8) at (4.75, -2) {};
		\node [style=none] (9) at (3.75, -2) {};
		\node [style=none] (10) at (-2.5, 3) {};
		\node [style=none] (11) at (-2.5, 5) {};
		\node [style=none] (12) at (4.75, 2) {};
		\node [style=none] (13) at (1, 2) {};
		\node [style=none] (14) at (1.5, 3) {};
		\node [style=none] (15) at (1.5, 5) {};
		\node [style=none] (16) at (2, 2) {};
		\node [style=none] (17) at (3, 2) {};
		\node [style=none] (18) at (3.4, 2.75) {};
		\node [style=none] (19) at (4, 5) {};
		\node [style=none] (20) at (-2.5, 5.5) {\textcolor{green}{\tiny{$2$}}};
		\node [style=none] (21) at (1.5, 5.5) {\textcolor{green}{\tiny{$2$}}};
		\node [style=none] (22) at (4.75, 5.5) {\textcolor{red}{\tiny{$4$}}};
		\node [style=none] (23) at (3.75, -4) {\tiny{$1$}};
		\node [style=none] (24) at (4.75, -4) {\tiny{$1$}};
		\node [style=none] (25) at (3, -4) {\tiny{$1$}};
		\node [style=none] (26) at (-2, -4) {\tiny{$1$}};
		\node [style=none] (27) at (-0.5, 5.5) {\textcolor{green}{\tiny{$2$}}};
		\node [style=none] (28) at (-0.5, 5) {};
		\node [style=none] (29) at (0, 2) {};
		\node [style=none] (30) at (-1, 2) {};
		\node [style=none] (31) at (-0.5, 3) {};
		\node [style=none] (33) at (4.5, 2.75) {};
		\node [style=none] (34) at (4, 3.25) {};
		\node [style=none] (35) at (-1, -3.5) {};
		\node [style=none] (36) at (-1, -1) {};
		\node [style=none] (37) at (-1, -4) {\tiny{$1$}};
		\node [style=none] (38) at (0, -3.5) {};
		\node [style=none] (39) at (0, -1) {};
		\node [style=none] (40) at (0, -4) {\tiny{$1$}};
		\node [style=none] (41) at (1, -3.5) {};
		\node [style=none] (42) at (1, -1) {};
		\node [style=none] (43) at (1, -4) {\tiny{$1$}};
		\node [style=none] (44) at (2, -3.5) {};
		\node [style=none] (45) at (2, -1) {};
		\node [style=none] (46) at (2, -4) {\tiny{$1$}};
		\node [style=none] (47) at (-3, -3.5) {};
		\node [style=none] (48) at (-3, -1) {};
		\node [style=none] (49) at (-3, -4) {\tiny{$1$}};
		\node [style=none] (51) at (1.5, 0) {};
		\node [style=none] (52) at (1.5, -0.5) {};
		\node [style=none] (54) at (0.5, 0.75) {};
		\node [style=none] (55) at (2.5, 0.75) {};
%		\node [style=none] (56) at (-1.75, 7.5) {56};
		\node [style=none] (57) at (0.5, 1.25) {};
		\node [style=none] (58) at (2.5, 1.25) {};
		\node [style=none] (59) at (-3, -1) {};
		\node [style=none] (60) at (-3, 0.5) {};
		\node [style=none] (61) at (-2, 0.5) {};
		\node [style=none] (62) at (-2, -1) {};
		\node [style=none] (63) at (-2.5, -0.5) {};
		\node [style=none] (64) at (-2.5, 0) {};
		\node [style=none] (65) at (-3, 0.5) {};
		\node [style=none] (66) at (-3, 2) {};
		\node [style=none] (67) at (-2, 2) {};
		\node [style=none] (68) at (-2, 0.5) {};
		\node [style=none] (69) at (-2.5, 1) {};
		\node [style=none] (70) at (-2.5, 1.5) {};
		\node [style=none] (95) at (2.75, 1) {\textcolor{green}{\tiny{$2$}}};
		\node [style=none] (96) at (0.75, 1) {\textcolor{green}{\tiny{$2$}}};
		\node [style=none] (97) at (-2.25, 1.25) {\textcolor{green}{\tiny{$2$}}};
		\node [style=none] (97) at (-2.25, -0.25) {\textcolor{green}{\tiny{$2$}}};
		\node [style=none] (97) at (1.75, -0.25) {\textcolor{green}{\tiny{$2$}}};
		\node [style=none] (97) at (3.35, 3.35) {\textcolor{red}{\tiny{$2$}}};
		\node [style=none] (144) at (5.75, -3.5) {};
		\node [style=none] (145) at (5.75, 2) {};
		\node [style=none] (146) at (5.75, -4) {\tiny{$1$}};
		\node [style=none] (119) at (4.75, 5) {};
		\node [style=none] (134) at (4.75, 4.25) {};
		\node [style=none] (97) at (3.85, 4) {\textcolor{red}{\tiny{$3$}}};
		\node [style=none] (108) at (4.75, -3.5) {};
		\node [style=none] (109) at (3.75, -3.5) {};
		\node [style=none] (110) at (3.375, -3) {};
		\node [style=none] (111) at (3.375, -2.5) {};
		\node [style=none] (197) at (3.75, -2.75) {\textcolor{green}{\tiny{$2$}}};
		\node [style=none] (105) at (3, -2) {};
	\end{pgfonlayer}
	\begin{pgfonlayer}{edgelayer}
		\draw [thick, dashed, green] (0.center) to (9.center);
%		\draw [thick, dashed, green] (1.center) to (9.center);
%		\draw [thick, dashed, green] (7.center) to (108.center);
		\draw [thick, dashed, green] (5.center) to (105.center);
		\draw [thick, dashed, green] (3.center) to (4.center);
		\draw [thick, dashed, green] [out=90,in=0] (16.center) to (14.center);
		\draw [thick, dashed, green] [out=90,in=180] (13.center) to (14.center);
		\draw [thick, dashed, green] (14.center) to (15.center);
		\draw [thick, dashed, green] (10.center) to (11.center);
		\draw [thick, densely dashdotted, densely dashdotted, red]  [out=90,in=180] (17.center) to (18.center);
		\draw [thick, dashed, green] [out=90,in=0] (29.center) to (31.center);
		\draw [thick, dashed, green] [out=90,in=180] (30.center) to (31.center);
		\draw [thick, dashed, green] (31.center) to (28.center);
		\draw [thick, dashed, green] (36.center) to (35.center);
		\draw [thick, dashed, green] (39.center) to (38.center);
		\draw [thick, dashed, green] (42.center) to (41.center);
		\draw [thick, dashed, green] (45.center) to (44.center);
		\draw [thick, dashed, green] (48.center) to (47.center);
		\draw [thick, dashed, green] [out=180,in=90] (52.center) to (42.center);
		\draw [thick, dashed, green] [out=0,in=90] (52.center) to (45.center);
		\draw [thick, dashed, green] (52.center) to (51.center);
		\draw [thick, dashed, green] [out=180,in=0] (55.center) to (51.center);
		\draw [thick, dashed, green] [out=0,in=180] (54.center) to (51.center);
		\draw [thick, dashed, green] [out=0,in=90] (55.center) to (5.center);
		\draw [thick, dashed, green] [out=180,in=90] (54.center) to (39.center);
		\draw [thick, dashed, green]  (57.center) to (54.center);
		\draw [thick, dashed, green] [out=180,in=270] (57.center) to (29.center);
		\draw [thick, dashed, green] [out=0,in=270] (57.center) to (13.center);
		\draw [thick, dashed, green] (55.center) to (58.center);
		\draw [thick, dashed, green] [out=180,in=270] (58.center) to (16.center);
		\draw [thick, dashed, green] [out=0,in=270] (58.center) to (17.center);
		\draw [thick, dashed, green] [out=180,in=90] (63.center) to (59.center);
		\draw [thick, dashed, green] [out=0,in=90] (63.center) to (62.center);
		\draw [thick, dashed, green] [out=270,in=0] (61.center) to (64.center);
		\draw [thick, dashed, green] [out=270,in=180] (60.center) to (64.center);
		\draw [thick, dashed, green] (64.center) to (63.center);
		\draw [thick, dashed, green] [out=180,in=90] (69.center) to (65.center);
		\draw [thick, dashed, green] [out=0,in=90] (69.center) to (68.center);
		\draw [thick, dashed, green] [out=270,in=0] (67.center) to (70.center);
		\draw [thick, dashed, green] [out=270,in=180] (66.center) to (70.center);
		\draw [thick, dashed, green] (70.center) to (69.center);
%		\draw [thick, dashed, green] (33.center) to (34.center);
		\draw [thick, densely dashdotted, red]  [out=90,in=180] (18.center) to (34.center);
%		\draw [thick, densely dashdotted, red]  (34.center) to (19.center);
		\draw [thick, dashed, green] [out=180,in=90] (10.center) to (66.center);
		\draw [thick, dashed, green] [out=0,in=90] (10.center) to (67.center);
		\draw [thick, densely dashdotted, red]  [out=90,in=0] (0.center) to (18.center);
		\draw [thick, dashed, green] (108.center) to (12.center);
		\draw [thick, densely dashdotted, red]  [out=90,in=0] (12.center) to (34.center);
		\draw [thick, dashed, green] (30.center) to (36.center);
		\draw [thick, dashed, green] (144.center) to (145.center);
		\draw [thick, densely dashdotted, red]  (134.center) to (119.center);
		\draw [thick, densely dashdotted, red]  [out=90,in=180] (34.center) to (134.center);
		\draw [thick, densely dashdotted, red]  [out=90,in=0] (145.center) to (134.center);
		\draw [thick, dashed, green] [out=90,in=0] (109.center) to (110.center);
		\draw [thick, dashed, green] [out=90,in=180] (6.center) to (110.center);
		\draw [thick, dashed, green] (110.center) to (111.center);
		\draw [thick, dashed, green] [out=270,in=180] (105.center) to (111.center);
		\draw [thick, dashed, green] [out=270,in=0] (9.center) to (111.center);
	\end{pgfonlayer}
\end{tikzpicture}
\;\; \leadsto \;\;  (-1)^{6} \;\;
\begin{tikzpicture}[baseline=-2ex,scale=.4, color=\clr ]
	\begin{pgfonlayer}{nodelayer}
		\node [style=none] (0) at (3.75, 2) {};
		\node [style=none] (1) at (3.75, -1) {};
		\node [style=none] (3) at (-2, -1) {};
		\node [style=none] (4) at (-2, -2) {};
		\node [style=none] (5) at (3, -1) {};
		\node [style=none] (6) at (3, -2) {};
		\node [style=none] (7) at (4.75, -1) {};
		\node [style=none] (8) at (4.75, -2) {};
		\node [style=none] (9) at (3.75, -2) {};
		\node [style=none] (10) at (-2.5, 3) {};
		\node [style=none] (11) at (-2.5, 4) {};
		\node [style=none] (12) at (4.75, 2) {};
		\node [style=none] (13) at (1, 2) {};
		\node [style=none] (14) at (1.5, 3) {};
		\node [style=none] (15) at (1.5, 4) {};
		\node [style=none] (16) at (2, 2) {};
		\node [style=none] (17) at (3, 2) {};
		\node [style=none] (18) at (3.5, 2.75) {};
		\node [style=none] (19) at (4, 4) {};
%		\node [style=none] (20) at (-2.5, 4.5) {\textcolor{green}{\tiny{$2$}}};
%		\node [style=none] (21) at (1.5, 4.5) {\textcolor{green}{\tiny{$2$}}};
%		\node [style=none] (22) at (4, 4.5) {\textcolor{red}{\tiny{$r$}}};
%		\node [style=none] (23) at (3.75, -2.5) {\tiny{$1$}};
%		\node [style=none] (24) at (4.75, -2.5) {\tiny{$1$}};
%		\node [style=none] (25) at (3, -2.5) {\tiny{$1$}};
%		\node [style=none] (26) at (-2, -2.5) {\tiny{$1$}};
%		\node [style=none] (27) at (-0.5, 4.5) {\textcolor{green}{\tiny{$2$}}};
		\node [style=none] (28) at (-0.5, 4) {};
		\node [style=none] (29) at (0, 2) {};
		\node [style=none] (30) at (-1, 2) {};
		\node [style=none] (31) at (-0.5, 3) {};
		\node [style=none] (33) at (4.5, 2.75) {};
		\node [style=none] (34) at (4, 3.25) {};
		\node [style=none] (35) at (-1, -2) {};
		\node [style=none] (36) at (-1, -1) {};
%		\node [style=none] (37) at (-1, -2.5) {\tiny{$1$}};
		\node [style=none] (38) at (0, -2) {};
		\node [style=none] (39) at (0, -1) {};
%		\node [style=none] (40) at (0, -2.5) {\tiny{$1$}};
		\node [style=none] (41) at (1, -2) {};
		\node [style=none] (42) at (1, -1) {};
%		\node [style=none] (43) at (1, -2.5) {\tiny{$1$}};
		\node [style=none] (44) at (2, -2) {};
		\node [style=none] (45) at (2, -1) {};
%		\node [style=none] (46) at (2, -2.5) {\tiny{$1$}};
		\node [style=none] (47) at (-3, -2) {};
		\node [style=none] (48) at (-3, -1) {};
%		\node [style=none] (49) at (-3, -2.5) {\tiny{$1$}};
		\node [style=none] (51) at (1.5, 0) {};
		\node [style=none] (52) at (1.5, -0.5) {};
		\node [style=none] (54) at (0.5, 0.75) {};
		\node [style=none] (55) at (2.5, 0.75) {};
%		\node [style=none] (56) at (-1.75, 7.5) {56};
		\node [style=none] (57) at (0.5, 1.25) {};
		\node [style=none] (58) at (2.5, 1.25) {};
		\node [style=none] (59) at (-3, -1) {};
		\node [style=none] (60) at (-3, 0.5) {};
		\node [style=none] (61) at (-2, 0.5) {};
		\node [style=none] (62) at (-2, -1) {};
		\node [style=none] (63) at (-2.5, -0.5) {};
		\node [style=none] (64) at (-2.5, 0) {};
		\node [style=none] (65) at (-3, 0.5) {};
		\node [style=none] (66) at (-3, 2) {};
		\node [style=none] (67) at (-2, 2) {};
		\node [style=none] (68) at (-2, 0.5) {};
		\node [style=none] (69) at (-2.5, 1) {};
		\node [style=none] (70) at (-2.5, 1.5) {};
		\node [style=none] (112) at (5.75, 2) {};
		\node [style=none] (108) at (5.75, -2) {};

	\end{pgfonlayer}
	\begin{pgfonlayer}{edgelayer}
		\draw [thick, color=black] (0.center) to (1.center);
%		\draw [thick, color=black] (1.center) to (9.center);
		\draw [thick, color=black] (7.center) to (8.center);
%		\draw [thick, color=black] (5.center) to (6.center);
		\draw [thick, color=black] (3.center) to (4.center);
		\draw [thick, color=black] [out=90,in=0] (16.center) to (14.center);
		\draw [thick, color=black] [out=90,in=180] (13.center) to (14.center);
%		\draw [thick, color=black] (14.center) to (15.center);
%		\draw [thick, color=black] (10.center) to (11.center);
%		\draw [thick, densely dashdotted, densely dashdotted, red]  [out=90,in=180] (17.center) to (18.center);
		\draw [thick, color=black] [out=90,in=0] (29.center) to (31.center);
		\draw [thick, color=black] [out=90,in=180] (30.center) to (31.center);
%		\draw [thick, color=black] (31.center) to (28.center);
		\draw [thick, color=black] (36.center) to (35.center);
		\draw [thick, color=black] (39.center) to (38.center);
		\draw [thick, color=black] (42.center) to (41.center);
		\draw [thick, color=black] (45.center) to (44.center);
		\draw [thick, color=black] (48.center) to (47.center);
		\draw [thick, color=black] [out=180,in=90] (52.center) to (42.center);
		\draw [thick, color=black] [out=0,in=90] (52.center) to (45.center);
%		\draw [thick, color=black] (52.center) to (51.center);
		\draw [thick, color=black] [out=180,in=0] (55.center) to (51.center);
		\draw [thick, color=black] [out=0,in=180] (54.center) to (51.center);
		\draw [thick, color=black] [out=0,in=90] (55.center) to (5.center);
		\draw [thick, color=black] [out=180,in=90] (54.center) to (39.center);
%		\draw [thick, color=black] [out=,in=] (57.center) to (54.center);
		\draw [thick, color=black] [out=180,in=270] (57.center) to (29.center);
		\draw [thick, color=black] [out=0,in=270] (57.center) to (13.center);
%		\draw [thick, color=black] (55.center) to (58.center);
		\draw [thick, color=black] [out=180,in=270] (58.center) to (16.center);
		\draw [thick, color=black] [out=0,in=270] (58.center) to (17.center);
		\draw [thick, color=black] [out=180,in=90] (63.center) to (59.center);
		\draw [thick, color=black] [out=0,in=90] (63.center) to (62.center);
		\draw [thick, color=black] [out=270,in=0] (61.center) to (64.center);
		\draw [thick, color=black] [out=270,in=180] (60.center) to (64.center);
%		\draw [thick, color=black] (64.center) to (63.center);
		\draw [thick, color=black] [out=180,in=90] (69.center) to (65.center);
		\draw [thick, color=black] [out=0,in=90] (69.center) to (68.center);
		\draw [thick, color=black] [out=270,in=0] (67.center) to (70.center);
		\draw [thick, color=black] [out=270,in=180] (66.center) to (70.center);
%		\draw [thick, color=black] (70.center) to (69.center);
%		\draw [thick, color=black] (33.center) to (34.center);
%		\draw [thick, densely dashdotted, densely dashdotted, red]  [out=90,in=180] (18.center) to (34.center);
%		\draw [thick, densely dashdotted, densely dashdotted, red]  (34.center) to (19.center);
		\draw [thick, color=black] [out=180,in=90] (10.center) to (66.center);
		\draw [thick, color=black] [out=0,in=90] (10.center) to (67.center);
%		\draw [thick, densely dashdotted, densely dashdotted, red]  [out=90,in=0] (0.center) to (18.center);
		\draw [thick, color=black] (7.center) to (12.center);
%		\draw [thick, densely dashdotted, densely dashdotted, red]  [out=90,in=0] (12.center) to (34.center);
		\draw [thick, color=black] (30.center) to (36.center);
		\draw [thick, color=black, looseness=1.5] [out=270,in=270] (1.center) to (5.center);
		\draw [thick, color=black, looseness=1.5] [out=90,in=90] (6.center) to (9.center);
		\draw [thick, color=black]  (112.center) to (108.center);
	\end{pgfonlayer}
\end{tikzpicture}
\; \; \leadsto \; \; (-1)^{8} [2]_{q}^{2} \; \;
\begin{tikzpicture}[baseline=-2ex,scale=.4, color=\clr ]
	\begin{pgfonlayer}{nodelayer}
		\node [style=none] (0) at (3.75, 2) {};
		\node [style=none] (1) at (3.75, 1) {};
		\node [style=none] (3) at (-2, -1) {};
		\node [style=none] (4) at (-2, -2) {};
		\node [style=none] (5) at (2.75, 1) {};
		\node [style=none] (6) at (2.75, -2) {};
		\node [style=none] (7) at (4.75, 1) {};
		\node [style=none] (8) at (4.75, -2) {};
		\node [style=none] (9) at (3.75, -2) {};
		\node [style=none] (10) at (-2.5, 3) {};
		\node [style=none] (11) at (-2.5, 4) {};
		\node [style=none] (12) at (4.75, 2) {};
		\node [style=none] (13) at (1, 2) {};
		\node [style=none] (14) at (1.5, 3) {};
		\node [style=none] (15) at (1.5, 4) {};
		\node [style=none] (16) at (2, 2) {};
		\node [style=none] (17) at (3, 2) {};
		\node [style=none] (18) at (3.5, 2.75) {};
		\node [style=none] (19) at (4, 4) {};
%		\node [style=none] (20) at (-2.5, 4.5) {\textcolor{green}{\tiny{$2$}}};
%		\node [style=none] (21) at (1.5, 4.5) {\textcolor{green}{\tiny{$2$}}};
%		\node [style=none] (22) at (4, 4.5) {\textcolor{red}{\tiny{$r$}}};
%		\node [style=none] (23) at (3.75, -2.5) {\tiny{$1$}};
%		\node [style=none] (24) at (4.75, -2.5) {\tiny{$1$}};
%		\node [style=none] (25) at (3, -2.5) {\tiny{$1$}};
%		\node [style=none] (26) at (-2, -2.5) {\tiny{$1$}};
%		\node [style=none] (27) at (-0.5, 4.5) {\textcolor{green}{\tiny{$2$}}};
		\node [style=none] (28) at (-0.5, 4) {};
		\node [style=none] (29) at (0, 2) {};
		\node [style=none] (30) at (-1, 2) {};
		\node [style=none] (31) at (-0.5, 3) {};
		\node [style=none] (33) at (4.5, 2.75) {};
		\node [style=none] (34) at (4, 3.25) {};
		\node [style=none] (35) at (-1, -2) {};
		\node [style=none] (36) at (-1, -1) {};
%		\node [style=none] (37) at (-1, -2.5) {\tiny{$1$}};
		\node [style=none] (38) at (0, -2) {};
		\node [style=none] (39) at (0, -1) {};
%		\node [style=none] (40) at (0, -2.5) {\tiny{$1$}};
		\node [style=none] (41) at (1, -2) {};
		\node [style=none] (42) at (1, -1) {};
%		\node [style=none] (43) at (1, -2.5) {\tiny{$1$}};
		\node [style=none] (44) at (2, -2) {};
		\node [style=none] (45) at (2, -1) {};
%		\node [style=none] (46) at (2, -2.5) {\tiny{$1$}};
		\node [style=none] (47) at (-3, -2) {};
		\node [style=none] (48) at (-3, -1) {};
%		\node [style=none] (49) at (-3, -2.5) {\tiny{$1$}};
		\node [style=none] (51) at (1.5, 0) {};
		\node [style=none] (52) at (1.5, -0.5) {};
		\node [style=none] (54) at (0.5, 0.75) {};
		\node [style=none] (55) at (2.5, 0.75) {};
%		\node [style=none] (56) at (-1.75, 7.5) {56};
		\node [style=none] (57) at (0.5, 1.25) {};
		\node [style=none] (58) at (2.5, 1.25) {};
		\node [style=none] (59) at (-3, -1) {};
		\node [style=none] (60) at (-3, 0.5) {};
		\node [style=none] (61) at (-2, 0.5) {};
		\node [style=none] (62) at (-2, -1) {};
		\node [style=none] (63) at (-2.5, -0.5) {};
		\node [style=none] (64) at (-2.5, 0) {};
		\node [style=none] (65) at (-3, 0.5) {};
		\node [style=none] (66) at (-3, 2) {};
		\node [style=none] (67) at (-2, 2) {};
		\node [style=none] (68) at (-2, 0.5) {};
		\node [style=none] (69) at (-2.5, 1) {};
		\node [style=none] (70) at (-2.5, 1.5) {};
		\node [style=none] (107) at (5.75, -1) {};
		\node [style=none] (108) at (5.75, -2) {};
		\node [style=none] (112) at (5.75, 1) {};	
	\end{pgfonlayer}
	\begin{pgfonlayer}{edgelayer}
%		\draw [thick, color=black] (0.center) to (1.center);
%		\draw [thick, color=black] (1.center) to (9.center);
		\draw [thick, color=black] (7.center) to (8.center);
%		\draw [thick, color=black] (5.center) to (6.center);
%		\draw [thick, color=black] (3.center) to (4.center);
%		\draw [thick, color=black] [out=90,in=0] (16.center) to (14.center);
%		\draw [thick, color=black] [out=90,in=180] (13.center) to (14.center);
%		\draw [thick, color=black] (14.center) to (15.center);
%		\draw [thick, color=black] (10.center) to (11.center);
%		\draw [thick, densely dashdotted, densely dashdotted, red]  [out=90,in=180] (17.center) to (18.center);
%		\draw [thick, color=black] [out=90,in=0] (29.center) to (31.center);
%		\draw [thick, color=black] [out=90,in=180] (30.center) to (31.center);
%		\draw [thick, color=black] (31.center) to (28.center);
%		\draw [thick, color=black] (36.center) to (35.center);
%		\draw [thick, color=black] (39.center) to (38.center);
%		\draw [thick, color=black] (42.center) to (41.center);
%		\draw [thick, color=black] (45.center) to (44.center);
%		\draw [thick, color=black] (48.center) to (47.center);
%		\draw [thick, color=black] [out=180,in=90] (52.center) to (42.center);
		\draw [thick, color=black, looseness=1.25] [out=90,in=90] (41.center) to (44.center);
%		\draw [thick, color=black] (52.center) to (51.center);
%		\draw [thick, color=black] [out=180,in=0] (55.center) to (51.center);
%		\draw [thick, color=black] [out=0,in=180] (54.center) to (51.center);
%		\draw [thick, color=black, looseness=1.25] [out=90,in=90] (38.center) to (6.center);
%		\draw [thick, color=black] [out=180,in=90] (54.center) to (39.center);
%		\draw [thick, color=black] [out=,in=] (57.center) to (54.center);
%		\draw [thick, color=black] [out=180,in=270] (57.center) to (29.center);
%		\draw [thick, color=black] [out=0,in=270] (57.center) to (13.center);
%		\draw [thick, color=black] (55.center) to (58.center);
%		\draw [thick, color=black] [out=180,in=270] (58.center) to (16.center);
		\draw [thick, color=black, looseness=1] [out=90,in=270] (35.center) to (5.center);
%		\draw [thick, color=black] [out=180,in=90] (63.center) to (59.center);
		\draw [thick, color=black, looseness=1.25] [out=90,in=90] (47.center) to (4.center);
%		\draw [thick, color=black] [out=270,in=0] (61.center) to (64.center);
%		\draw [thick, color=black] [out=270,in=180] (60.center) to (64.center);
%		\draw [thick, color=black] (64.center) to (63.center);
%		\draw [thick, color=black] [out=180,in=90] (69.center) to (65.center);
%		\draw [thick, color=black] [out=0,in=90] (69.center) to (68.center);
%		\draw [thick, color=black] [out=270,in=0] (67.center) to (70.center);
%		\draw [thick, color=black] [out=270,in=180] (66.center) to (70.center);
%		\draw [thick, color=black] (70.center) to (69.center);
%		\draw [thick, color=black] (33.center) to (34.center);
%		\draw [thick, densely dashdotted, densely dashdotted, red]  [out=90,in=180] (18.center) to (34.center);
%		\draw [thick, densely dashdotted, densely dashdotted, red]  (34.center) to (19.center);
%		\draw [thick, color=black] [out=180,in=90] (10.center) to (66.center);
%		\draw [thick, color=black] [out=0,in=90] (10.center) to (67.center);
%		\draw [thick, densely dashdotted, densely dashdotted, red]  [out=90,in=0] (0.center) to (18.center);
%		\draw [thick, color=black] (7.center) to (12.center);
%		\draw [thick, densely dashdotted, densely dashdotted, red]  [out=90,in=0] (12.center) to (34.center);
%		\draw [thick, color=black] (30.center) to (36.center);
%
		\draw [thick, color=black] (108.center) to (112.center);
		\draw [thick, color=black, looseness=1] [out=270,in=90] (1.center) to (38.center);
		\draw [thick, color=black, looseness=1.25] [out=90,in=90] (6.center) to (9.center);
	\end{pgfonlayer}
\end{tikzpicture} \; .
\end{equation}
The map $\eta$ is obtained by extending this rule linearly.

We need to check that $\eta$ is well-defined.  First we verify that the defining relations for $\rgWeb$ given in \cite[Definition 2.3]{TVW} involving only green edges are preserved by $\eta$.  Since diagrams with a green edge labeled by $\textcolor{green}{k} \geq 3$ are zero, the ``digon removal relation'' \cite[(2-7)]{TVW} in $\rgWeb$ reduces to the following equality:
\begin{equation*}
\begin{tikzpicture}[baseline=-2ex,scale=.4, color=\clr ]
	\begin{pgfonlayer}{nodelayer}
		\node [style=none] (57) at (-3, 0) {};
		\node [style=none] (58) at (-1, 0) {};
		\node [style=none] (59) at (-2, 1) {};
		\node [style=none] (60) at (-2, -1) {};
		\node [style=none] (61) at (-2, 3) {};
		\node [style=none] (62) at (-2, -3) {};
		\node [style=none] (63) at (-2, -3.5) {\textcolor{green}{\tiny{$2$}}};
		\node [style=none] (65) at (-2, 3.5) {\textcolor{green}{\tiny{$2$}}};
%		\node [style=none] (70) at (2, 3) {70};
%		\node [style=none] (71) at (2, -3) {71};
		\node [style=none] (72) at (-3.5, 0) {\textcolor{black}{\tiny{$1$}}};
		\node [style=none] (73) at (-0.5, 0) {\textcolor{black}{\tiny{$1$}}};
	\end{pgfonlayer}
	\begin{pgfonlayer}{edgelayer}
		\draw [thick, dashed, green] [out=180,in=90] (59.center) to (57.center);
		\draw [thick, dashed, green] [out=0,in=90] (59.center) to (58.center);
		\draw [thick, dashed, green] [out=0,in=270] (60.center) to (58.center);
		\draw [thick, dashed, green] [out=180,in=270] (60.center) to (57.center);
		\draw [thick, dashed, green]  (59.center) to (61.center);
		\draw [thick, dashed, green]  (60.center) to (62.center);
%		\draw [thick, dashed, green]  (71.center) to (70.center);
	\end{pgfonlayer}
\end{tikzpicture}
\;\; = \; \;  [2]_{q} \; 
\begin{tikzpicture}[baseline=-2ex,scale=.4, color=\clr ]
	\begin{pgfonlayer}{nodelayer}
%		\node [style=none] (57) at (-3, 0) {57};
%		\node [style=none] (58) at (-1, 0) {58};
%		\node [style=none] (59) at (-2, 1) {59};
%		\node [style=none] (60) at (-2, -1) {60};
%		\node [style=none] (61) at (-2, 3) {61};
%		\node [style=none] (62) at (-2, -3) {62};
%		\node [style=none] (63) at (-2, -3.5) {\textcolor{green}{\tiny{$2$}}};
%		\node [style=none] (65) at (-2, 3.5) {\textcolor{green}{\tiny{$2$}}};
		\node [style=none] (70) at (2, 3) {};
		\node [style=none] (71) at (2, -3) {};
		\node [style=none] (72) at (2, -3.5) {\textcolor{green}{\tiny{$2$}}};
		\node [style=none] (73) at (2, 3.5) {\textcolor{green}{\tiny{$2$}}};
	\end{pgfonlayer}
	\begin{pgfonlayer}{edgelayer}
%		\draw [thick, dashed, green] [out=180,in=90] (59.center) to (57.center);
%		\draw [thick, dashed, green] [out=0,in=90] (59.center) to (58.center);
%		\draw [thick, dashed, green] [out=180,in=270] (60.center) to (58.center);
%		\draw [thick, dashed, green] [out=0,in=270] (60.center) to (57.center);
%		\draw [thick, dashed, green]  (59.center) to (61.center);
%		\draw [thick, dashed, green]  (60.center) to (62.center);
		\draw [thick, dashed, green]  (71.center) to (70.center);
	\end{pgfonlayer}
\end{tikzpicture} \;  .
\end{equation*}  Similarly, an example of the ``square switch'' relation \cite[(2-8)]{TVW} is the following equality:
\begin{equation}\label{E:squareswitch}
\begin{tikzpicture}[baseline=-2ex,scale=.4, color=\clr ]
	\begin{pgfonlayer}{nodelayer}
		\node [style=none] (0) at (-4, -2) {};
		\node [style=none] (1) at (-2, -2) {};
		\node [style=none] (2) at (-4, -1) {};
		\node [style=none] (3) at (-2, 0) {};
		\node [style=none] (4) at (-2, 1) {};
		\node [style=none] (5) at (-4, 2) {};
		\node [style=none] (6) at (-4, 3) {};
		\node [style=none] (7) at (-2, 3) {};
		\node [style=none] (16) at (-4, 3.5) {\textcolor{green}{\tiny{$2$}}};
		\node [style=none] (17) at (-2, 3.5) {\textcolor{black}{\tiny{$1$}}};
		\node [style=none] (18) at (-4, -2.5) {\textcolor{green}{\tiny{$2$}}};
		\node [style=none] (19) at (-2, -2.5) {\textcolor{black}{\tiny{$1$}}};
		\node [style=none] (20) at (2, -2) {};
		\node [style=none] (21) at (4, -2) {};
		\node [style=none] (26) at (2, 3) {};
		\node [style=none] (27) at (4, 3) {};
		\node [style=none] (28) at (2, 3.5) {\textcolor{green}{\tiny{$2$}}};
		\node [style=none] (29) at (4, 3.5) {\textcolor{black}{\tiny{$1$}}};
		\node [style=none] (30) at (2, -2.5) {\textcolor{green}{\tiny{$2$}}};
		\node [style=none] (31) at (4, -2.5) {\textcolor{black}{\tiny{$1$}}};
		\node [style=none] (35) at (-4.5, 0.5) {\textcolor{black}{\tiny{$1$}}};
		\node [style=none] (32) at (-1.5, 0.5) {\textcolor{green}{\tiny{$2$}}};
		\node [style=none] (34) at (0.15, 0.4) {\textcolor{black}{=}};
	\end{pgfonlayer}
	\begin{pgfonlayer}{edgelayer}
		\draw [thick, dashed, green] (0.center) to (2.center);
		\draw [thick, dashed, green] [out=0,in=90] (2.center) to (3.center);
		\draw [thick, dashed, green] (1.center) to (3.center);
		\draw [thick, dashed, green] (3.center) to (4.center);
		\draw [thick, dashed, green] (5.center) to (2.center);
		\draw [thick, dashed, green] [out=180,in=90] (4.center) to (5.center);
		\draw [thick, dashed, green] (5.center) to (6.center);
		\draw [thick, dashed, green] (4.center) to (7.center);
		\draw [thick, dashed, green] (27.center) to (21.center);
		\draw [thick, dashed, green] (26.center) to (20.center);
	\end{pgfonlayer}
\end{tikzpicture} \; .
\end{equation}   In every case an easy check verifies that the rules for $\eta$ do the same thing to both sides of the equations.  In checking, note that the green (co)associativity relations given in \cite[(2-6)]{TVW} are zero and there is nothing to check.

As we next explain, the fact it preserves the green relations implies $\eta$ is well-defined.
Set
\begin{equation*}
E \; \; = \; \; 
\begin{tikzpicture}[baseline=-2ex,scale=.5, color=\clr ]
	\begin{pgfonlayer}{nodelayer}
		\node [style=none] (4) at (-3.75, 0) {\tiny{$\dots $}};
		\node [style=none] (9) at (0, 1) {};
		\node [style=none] (10) at (2, 1) {};
		\node [style=none] (11) at (1, 0) {};
		\node [style=none] (12) at (1, -1) {};
		\node [style=none] (13) at (1, 1) {\tiny{$\dots $}};
		\node [style=none] (14) at (0, 1.5) {\textcolor{black}{\tiny{$1$}}};
		\node [style=none] (15) at (2, 1.5) {\textcolor{black}{\tiny{$1$}}};
		\node [style=none] (16) at (1, -1.5) {\textcolor{red}{\tiny{$r$}}};
		\node [style=none] (19) at (-6.5, 1) {};
		\node [style=none] (20) at (-4.5, 1) {};
		\node [style=none] (21) at (-5.5, 0) {};
		\node [style=none] (22) at (-5.5, -1) {};
		\node [style=none] (24) at (-6.5, 1.5) {\textcolor{black}{\tiny{$1$}}};
		\node [style=none] (25) at (-4.5, 1.5) {\textcolor{black}{\tiny{$1$}}};
		\node [style=none] (26) at (-5.5, -1.5) {\textcolor{green}{\tiny{$2$}}};
		\node [style=none] (28) at (-3, 1) {};
		\node [style=none] (29) at (-1, 1) {};
		\node [style=none] (30) at (-2, 0) {};
		\node [style=none] (31) at (-2, -1) {};
		\node [style=none] (33) at (-3, 1.5) {\textcolor{black}{\tiny{$1$}}};
		\node [style=none] (34) at (-1, 1.5) {\textcolor{black}{\tiny{$1$}}};
		\node [style=none] (35) at (-2, -1.5) {\textcolor{green}{\tiny{$2$}}};
	\end{pgfonlayer}
	\begin{pgfonlayer}{edgelayer}
		\draw [thick, densely dashdotted, densely dashdotted, red]  [out=270,in=180] (9.center) to (11.center);
		\draw [thick, densely dashdotted, densely dashdotted, red]  [out=270,in=0] (10.center) to (11.center);
		\draw [thick, densely dashdotted, densely dashdotted, red]  (11.center) to (12.center);
		\draw [thick, dashed, green] [out=270,in=180] (19.center) to (21.center);
		\draw [thick, dashed, green] [out=270,in=0] (20.center) to (21.center);
		\draw [thick, dashed, green] (21.center) to (22.center);
		\draw [thick, dashed, green] [out=270,in=180](28.center) to (30.center);
		\draw [thick, dashed, green] [out=270,in=0](29.center) to (30.center);
		\draw [thick, dashed, green] (30.center) to (31.center);
	\end{pgfonlayer}
\end{tikzpicture}
\; \; \;\;\text{ and } \;\;\;\;
C \; \; = \; \; 
\begin{tikzpicture}[baseline=-2ex,scale=.5, color=\clr, yscale=-1 ]
	\begin{pgfonlayer}{nodelayer}
		\node [style=none] (4) at (-3.75, 0) {\tiny{$\dots $}};
		\node [style=none] (9) at (0, 1) {};
		\node [style=none] (10) at (2, 1) {};
		\node [style=none] (11) at (1, 0) {};
		\node [style=none] (12) at (1, -1) {};
		\node [style=none] (13) at (1, 1) {\tiny{$\dots $}};
		\node [style=none] (14) at (0, 1.5) {\textcolor{black}{\tiny{$1$}}};
		\node [style=none] (15) at (2, 1.5) {\textcolor{black}{\tiny{$1$}}};
		\node [style=none] (16) at (1, -1.5) {\textcolor{red}{\tiny{$r$}}};
		\node [style=none] (19) at (-6.5, 1) {};
		\node [style=none] (20) at (-4.5, 1) {};
		\node [style=none] (21) at (-5.5, 0) {};
		\node [style=none] (22) at (-5.5, -1) {};
		\node [style=none] (24) at (-6.5, 1.5) {\textcolor{black}{\tiny{$1$}}};
		\node [style=none] (25) at (-4.5, 1.5) {\textcolor{black}{\tiny{$1$}}};
		\node [style=none] (26) at (-5.5, -1.5) {\textcolor{green}{\tiny{$2$}}};
		\node [style=none] (28) at (-3, 1) {};
		\node [style=none] (29) at (-1, 1) {};
		\node [style=none] (30) at (-2, 0) {};
		\node [style=none] (31) at (-2, -1) {};
		\node [style=none] (33) at (-3, 1.5) {\textcolor{black}{\tiny{$1$}}};
		\node [style=none] (34) at (-1, 1.5) {\textcolor{black}{\tiny{$1$}}};
		\node [style=none] (35) at (-2, -1.5) {\textcolor{green}{\tiny{$2$}}};
	\end{pgfonlayer}
	\begin{pgfonlayer}{edgelayer}
		\draw [thick, densely dashdotted, densely dashdotted, red]  [out=270,in=180] (9.center) to (11.center);
		\draw [thick, densely dashdotted, densely dashdotted, red]  [out=270,in=0] (10.center) to (11.center);
		\draw [thick, densely dashdotted, densely dashdotted, red] (11.center) to (12.center);
		\draw [thick, dashed, green] [out=270,in=180] (19.center) to (21.center);
		\draw [thick, dashed, green] [out=270,in=0] (20.center) to (21.center);
		\draw [thick, dashed, green] (21.center) to (22.center);
		\draw [thick, dashed, green] [out=270,in=180](28.center) to (30.center);
		\draw [thick, dashed, green] [out=270,in=0](29.center) to (30.center);
		\draw [thick, dashed, green] (30.center) to (31.center);
	\end{pgfonlayer}
\end{tikzpicture} \; .
\end{equation*}
Here $E$ has $n$ green splits and the red diagram is an  $\textcolor{red}{r}$ split apart into $r$ strands labeled by $1$ by repeatedly concatenating red splits. Likewise, $C$ has $n$ green merges and and the red diagram is $r$ strands labeled by $1$ that are merged together into a single $\textcolor{red}{r}$ by concatenating red merges.  

If $D_{1}$ and $D_{2}$ are two linear combinations of diagrams, all of the form \cref{E:Diagram},  which are equal in $\rgWeb$, then we can concatenate an $E$ on top of each to obtain $ED_{1}$ and $ED_{2}$.  These are elements of $\End_{\rgWeb}(1^{d})$ and will still be equal in $\rgWeb$.

Applying \cite[Lemma 2.12, Corollary 2.13]{TVW} to the red part of $ED_{1}$ and $ED_{2}$ shows they can be rewritten as a linear combination of diagrams in $\rgWeb$ consisting of only green edges.  That is, $ED_{1}$ and $ED_{2}$ are morphisms in the subcategory $\gWeb$ (the subcategory of green objects and the morphisms in $\rgWeb$ which can be expressed as a linear combination of diagrams with only green edges).  However, by \cite[Corollary 2.16]{TVW} the category $\gWeb$ is equivalent to the ``green-ification'' of the category $\CKMWeb$ introduced by Cautis--Kamnitzer--Morrison.  Thus, if two diagrams are equal in $\gWeb$, they must be related by the application of a finite sequence of defining relations of $\CKMWeb$ which have been green-ified.  However, a check verifies that the greenification of the defining relations of $\CKMWeb$ are nothing but relations in $\rgWeb$ which follow from the all-green defining  relations.

The discussion in the previous paragraph implies there is a sequence of linear combinations of diagrams in $\End_{\gWeb}(1^{\otimes d})$, 
\[
u_{1}, u_{2}, \dotsc , u_{p},
\] where each $u_{i}$ consists of only green edges, each $u_{i}$ is obtained from $u_{i-1}$ by applying one of the green relations of $\rgWeb$, and where $u_{1}=ED_{1}$ and $u_{2}=ED_{2}$ in $\rgWeb$. Concatenating each term of this sequence on top with $C$ yields the sequence,
\[
Cu_{1}, Cu_{2}, \dotsc , Cu_{p},
\] where each $Cu_{i}$ is obtained from $Cu_{i-1}$ by applying one of the green relations of $\rgWeb$ to $u_{i-1}$ while keeping $C$ fixed.

However, note that each $Cu_{i}$ is a linear combination of diagrams of the form given in \cref{E:Diagram} and so it makes sense to apply the rule defining $\eta$ to each of them.  Since $\eta$ satisfies the green relations of $\rgWeb$ it follows that 
\[
\eta(Cu_{1})= \eta(Cu_{2})= \dotsc = \eta(Cu_{p}).
\]

Set $\kappa=[r]_{q}![2]_{q}^{n}$.  We claim that $\eta (D_{1}) = \frac{1}{\kappa}\eta (Cu_{1})$ and $\eta (D_{2}) = \frac{1}{\kappa}\eta(Cu_{p})$.  Since the rule for $\eta$ is local and $u_{1}$ (resp., $u_{p}$) is obtained by applying \cite[Lemma 2.12, Corollary 2.13]{TVW} only to the red part of $ED_{1}$ (resp., $ED_{2}$), it suffices to consider the case when $D_{1}=D_{2}=D$ is a diagram of the form given in \cref{E:Diagram} where $X$ consists of $d$ black vertical strands labeled by $1$.  That is, the case when $D = C$.  By \cite[Lemma 2.12]{TVW}, $ED = \kappa \mathcal{CL}$, where the $\mathcal{CL}$ is the green-red diagram given by the left-to-right horizontal concatenation of $n$ green clasps $\mathcal{CL}_{2}^{\textcolor{green}{g}}$ and one red clasp $ \mathcal{CL}_{r}^{\textcolor{red}{r}}$.  By \cite[Corollary 2.13, Definition 2.11]{TVW} the red clasp can be rewritten into smaller red clasps and green diagrams, and by repeatedly using this recursion one ultimately obtains $u_{1}$ for $ED$.  Using this recursion and our convention that any web with an edge connecting two vertices from among $\{1', \dotsc , r' \}$ is equal to zero, a straightforward induction on the size of the red clasp shows that $\eta(Cu_{1})=\kappa w_{0}$.  Since $\eta(D)=w_{0}$, it follows that $\eta(D) = \frac{1}{\kappa}\eta(Cu_{1})$, as desired.

Combining the previous calculations yields
\[
\eta (D_{1}) = \frac{1}{\kappa} \eta \left( C u_{1} \right) = \frac{1}{\kappa} \eta \left( Cu_{p} \right)  = \eta (D_{2}).
\] Therefore $\eta$ is well-defined.

To show that $\eta$ is a $\HH_{d}(q)$-module homomorphism requires that we analyze the two ways the generator $T_{i}$ may act on the web module as described in \cref{SS:Webmodule}.  The easier case is when at least one of the vertices $i$ and $i+1$ are not adjacent to through-strings in $\eta(D)$. In this case equivariance for the generator $T_{i}$ is immediate from the observation that when the rules for $\eta$ are applied to the green-red diagram $\beta_{i}$ the result is the linear combination of webs which gives the action of $T_{i}$ on $\eta(D)$.

The second and less obvious case is when both $i$ and $i+1$ are through-strings in $\eta(D)$. In this case, 
\[
\eta(D.T_{i})=q\eta(D) + \eta (D\tilde{E}_{i}),
\]
 where $\tilde{E}_{i}$ is the second diagram in the definition of $\beta_{i}$ given in \cref{SS:TVW-webs}, and where $D\tilde{E}_{i}$ is the green-red diagram obtained by concatenating $\tilde{E}_{i}$ onto the bottom of $D$.  It will suffice to show $D\tilde{E}_{i}=0$ in $\rgWeb$.

First, by the rule for $\eta$, the tops of through-strings in $\eta(D)$ correspond to vertices where the red part of $D$ meets $X$. Since the two through-strings in question have adjacent vertices at the bottom, they also have adjacent vertices at the top. In $D$ call these top vertices $j$ and $j+1$.  By the associativity relation \cite[Definition 2.3]{TVW} the red merge diagrams in $D$ can be concatenated in any order.  Therefore we may assume that the lowest red merge in $D$ is located at vertices $j$ and $j+1$.  For example, in \cref{E:etaexample} if $i=3$, then $j=7$ and the diagram as drawn already has lowest red merge at the location of vertices $j$ and $j+1$;  if $i=9$, then $j=9$ and we would use the associativity relation for red merges to replace the red part of $D$ with, for example, the same red diagram reflected across a vertical axis.

Second, since we know that in $\eta(D)$ we have parallel through-strings from $i$ to $j$ and from $i+1$ to $j+1$, it must be that in the $X$ portion of $D$ the corresponding green edges labeled by $1$ must travel in parallel from $i$ to $j$ and from $i+1$ to $j+1$.  As seen in \cref{E:etaexample}, the  region enclosed in $X$ by these two sequences of green edges may be bisected by some number of green edges labeled by $\textcolor{green}{2}$.  By repeatedly using the ``square switch'' relation given in \cref{E:squareswitch} these edges labeled by $\textcolor{green}{2}$ may be eliminated.  The end result is that $D\tilde{E}_{i}$ is equal in $\rgWeb$ to a green-red diagram which contains a subdiagram of the form:
\begin{equation*}
\begin{tikzpicture}[baseline=-2ex,scale=.35, color=\clr, yscale=-1 ]
	\begin{pgfonlayer}{nodelayer}
		\node [style=none] (0) at (0, 3) {};
		\node [style=none] (1) at (0, 2) {};
		\node [style=none] (2) at (0.75, 0) {};
		\node [style=none] (3) at (-0.75, 0) {};
		\node [style=none] (4) at (0, -2) {};
		\node [style=none] (5) at (0, -3) {};
		\node [style=none] (15) at (0.5, 2.5) {\textcolor{green}{\tiny{2}}};
		\node [style=none] (15) at (0.5, -2.5) {\textcolor{red}{\tiny{2}}};
	\end{pgfonlayer}
	\begin{pgfonlayer}{edgelayer}
		\draw [thick, dashed, green] (0.center) to (1.center);
		\draw [thick, dashed, green] [out=180,in=90](1.center) to (3.center);
		\draw [thick, dashed, green] [out=0,in=90](1.center) to (2.center);
		\draw [thick, densely dashdotted,, red] [out=270,in=180] (3.center) to (4.center);
		\draw [thick, densely dashdotted,, red] [out=270,in=0] (2.center) to (4.center);
		\draw [thick, densely dashdotted,, red]  (4.center) to (5.center);
	\end{pgfonlayer}
\end{tikzpicture}\; .
\end{equation*}  However, by \cite[Lemma 2.9]{TVW} this subdiagram and, hence, $D\tilde{E}_{i}$, is equal to zero in $\rgWeb$.  Thus $\eta$ is equivariant for $T_{i}$ in this case as well.

Therefore $\eta$ is a nonzero $\HH_{d}(q)$-module homomorphism from $\Hom_{\rgWeb}(1^{\otimes d}, \textcolor{green}{2}^{\otimes n}\textcolor{red}{r})$ to $W^{(n+r,n)}$.  As mentioned above, this gives the desired $\HH_{d}(q)$-module isomorphism 
\[
\varphi : S^{(n+r,n)} \xrightarrow{\cong} W^{(n+r,n)}.
\]

Finally, observe that for $k=1, \dotsc , n$
\[
w_{0}T_{2k-1} = -q^{-1}w_{0}.
\]  Therefore, 
\[
w_{0}y_{\lambda^{\TT}} = (1+q^{-2})^{n}w_{0}.
\]   \cref{L:KeyLemma} implies that, after rescaling, $\varphi(v_{0})=\varphi(v_{t_{0}})=\varphi (v_{t^{\lambda^{\TT}}})=\varphi (z_{\lambda}) = w_{0}$, as claimed. 
\end{proof}

\section{Main Results}\label{S:MainResults}

\subsection{Unitriangularity}\label{SS:triangularity}

For the remainder of the paper let 
\[
\varphi: S^{\lambda} \to W^{(n+r,n)}
\] be the isomorphism given by \cref{T:TVW}.  Here and in the next result, recall that we write $W^{(n+r,n)}$ for the $\k$-vector space with basis the set of $(n,r)$-webs, $\WW^{(n+r,n)}$ (see \cref{SS:Webmodule}).

\begin{theorem}\label{T:uppertriangular} Let $\lambda = (n+r,n)$ be a partition of $2n+r$ with two parts.  For every $t \in \Std (\lambda^{\TT})$, 
\[
\varphi(v_{t}) =\psi (t^{\TT}) + \sum_{\substack{w \in \WW^{(n+r,n)}\\ \nest (w) < \nest (\psi (t^{\TT}))}} c_{w,t} w,
\] where $c_{w,t} \in \Z[q,q^{-1}]$.

\end{theorem}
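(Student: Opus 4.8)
The plan is to induct on $\nest(\psi(t^{\TT}))$, which by the discussion following \cref{P:OrderingProposition} equals $\ell(\eta^{-1}(t))$. For the base case take $t = t_{0} = t^{\lambda^{\TT}}$, the unique minimal element of $\Std(\lambda^{\TT})$. Then $\eta^{-1}(t_{0}) = e_{S_{d}}$, so $v_{t_{0}} = z_{\lambda} = v_{0}$ and $\varphi(v_{t_{0}}) = w_{0}$ by \cref{T:TVW}. Since $\nest(\psi(t_{0}^{\TT})) = \ell(\eta^{-1}(t_{0})) = 0$ and $w_{0}$ is the unique $(n+r,n)$-web of nesting number zero, we get $\psi(t_{0}^{\TT}) = w_{0}$, and the sum in the statement is empty; hence the claim holds for $t_{0}$.

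For the inductive step, let $t \in \Std(\lambda^{\TT})$ with $t \neq t_{0}$, so $\eta^{-1}(t) \neq e_{S_{d}}$. Choosing a reduced expression for $\eta^{-1}(t)$ ending in a simple transposition $s_{i}$ and setting $t' = \eta(\eta^{-1}(t)s_{i}) \in \Std(\lambda^{\TT})$ (this makes sense since $\Dst_{\lambda^{\TT}}$ is a down-set), we obtain $t' \leq_{i} t$, $\ell(\eta^{-1}(t')) = \ell(\eta^{-1}(t)) - 1$, and $\eta^{-1}(t) = \eta^{-1}(t')s_{i}$ with the length increasing by one; thus $T_{\eta^{-1}(t)} = T_{\eta^{-1}(t')}T_{i}$ and $v_{t} = v_{t'}T_{i}$. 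Applying the $\HH_{d}(q)$-module homomorphism $\varphi$ and the inductive hypothesis,
\[
\varphi(v_{t}) = \varphi(v_{t'})T_{i} = \psi\big((t')^{\TT}\big)T_{i} + \sum_{\substack{w \in \WW^{(n+r,n)} \\ \nest(w) < \nest(\psi((t')^{\TT}))}} c_{w,t'}\,\big(wT_{i}\big),
\]
where each $c_{w,t'} \in \Z[q,q^{-1}]$.

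Now read off nesting numbers. Since $t' \leq_{i} t$, \cref{P:OrderingProposition} gives $\psi((t')^{\TT}) \preceq_{i} \psi(t^{\TT})$, which by definition means $\psi((t')^{\TT})E_{i} = \psi(t^{\TT})$ with $\nest(\psi(t^{\TT})) = \nest(\psi((t')^{\TT})) + 1$. Hence
\[
\psi\big((t')^{\TT}\big)T_{i} = \psi\big((t')^{\TT}\big)E_{i} + q\,\psi\big((t')^{\TT}\big) = \psi(t^{\TT}) + q\,\psi\big((t')^{\TT}\big),
\]
the second summand having nesting number $\nest(\psi(t^{\TT})) - 1$. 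For the remaining terms, the case analysis in \cref{SS:partialorderonwebs} shows that for any web $w$ the element $wT_{i} = wE_{i} + qw$ is a $\Z[q,q^{-1}]$-linear combination of $(n+r,n)$-webs each of nesting number at most $\nest(w) + 1$, the strict increase occurring only when $i$ is an $R$-vertex and $i+1$ an $L$-vertex of $w$. Consequently every web occurring in $c_{w,t'}(wT_{i})$ has nesting number at most $\nest(w) + 1 \leq \nest(\psi((t')^{\TT})) = \nest(\psi(t^{\TT})) - 1$. Combining, $\varphi(v_{t}) = \psi(t^{\TT})$ plus a $\Z[q,q^{-1}]$-linear combination of webs of nesting number strictly less than $\nest(\psi(t^{\TT}))$; since no contribution other than the one from $\psi((t')^{\TT})T_{i}$ reaches nesting level $\nest(\psi(t^{\TT}))$, the coefficient of $\psi(t^{\TT})$ is exactly $1$, completing the induction.

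The only delicate point is this last degree bookkeeping — making sure the error terms $q\,\psi((t')^{\TT})$ and $c_{w,t'}(wE_{i})$ never climb back up to nesting level $\nest(\psi(t^{\TT}))$ — and this is precisely what the list of $E_{i}$-actions in \cref{SS:partialorderonwebs} delivers, since $E_{i}$ raises the nesting number by at most one and only in a single configuration. There is no deeper obstacle: the substantive content is already packaged in \cref{T:TVW}, \cref{P:OrderingProposition}, and the $E_{i}$-action list.
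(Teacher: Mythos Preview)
Your proof is correct and follows essentially the same approach as the paper's: induct along the weak Bruhat order (equivalently, on $\ell(\eta^{-1}(t)) = \nest(\psi(t^{\TT}))$), write $v_t = v_{t'}T_i$ for some $t' \leq_i t$, apply the inductive hypothesis, and use \cref{P:OrderingProposition} together with the case analysis of $E_i$ in \cref{SS:partialorderonwebs} to bound the nesting numbers of the resulting webs. The only cosmetic difference is that you isolate the leading term $\psi((t')^{\TT})T_i$ first and then handle the tail, whereas the paper splits everything uniformly as $q\cdot(\text{sum}) + (\text{sum})E_i$; the bookkeeping is the same either way.
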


\begin{proof}  The proof is by induction on the set $\Std (\lambda^{\TT})$ with respect to the weak Bruhat order.  The base case is the unique minimal element $t^{\lambda^{\TT}}$.  This corresponds to the basis element $v_{0}=v_{t_{0}}=v_{t^{\lambda^{\TT}}}=z_{\lambda}$.  By \cref{T:TVW}, $\varphi(v_{0}) = w_{0} = \psi (t^{\TT}_{0})$. As $w_{0}$ is the unique element of $\WW^{(n+r,n)}$ with nesting number zero,  the equation holds in this case.

Now let $t \in \Std (\lambda^{\TT})$ be given and choose $t_{1} \in \Std (\lambda^{\TT})$ such that $t_{1} \leq_{i} t$ for some $i=1, \dotsc , 2n+r-1$.  In particular, $t=t_{1}.s_{i}$ and $v_{t}= v_{t_{1}}T_{i}$. Then, 
\begin{align*}
\varphi(v_{t}) &= \varphi(v_{t_{1}}T_{i}) \\
                   & = \varphi(v_{t_{1}})T_{i} \\
                    & =\left(\psi (t_{1}) + \sum_{\substack{w \in \WW^{(n+r,n)} \\ \nest (w) < \nest (\psi (t_{1}^{\TT}))}} c_{w,t_{1}} w  \right)T_{i} \\
& = \left(q\psi (t_{1}) + \sum_{\substack{w \in \WW^{(n+r,n)} \\  \nest (w) < \nest (\psi (t_{1}^{\TT}))}} qc_{w,t_{1}} w   \right) +   \left(\psi (t_{1}) + \sum_{\substack{w \in \WW^{(n+r,n)} \\  \nest (w) < \nest (\psi (t_{1}^{\TT}))}} c_{w,t_{1}} w  \right)E_{i},
\end{align*} where the third equality is by the inductive assumption, and the fourth is by the definition of the action of $\HH_{d}(q)$ on $W^{\lambda}$.

In the first term of the last expression, since $t_{1} <_{i} t$, \cref{P:OrderingProposition} implies $\psi(t^{\TT}_{1}) \prec_{i} \psi (t^{\TT})$  and, as a consequence, $\nest (\psi (t_{1}^{\TT})) < \nest (\psi (t^{\TT}))$.  Thus, every web which appears with a nonzero coefficient in this term has nesting number strictly smaller than the nesting number of $\psi (t^{\TT})$.

In the second term of the last expression, it follows from \cref{P:OrderingProposition} that $\psi (t_{1}^{\TT})E_{i}=\psi (t^{\TT})$ and $\nest (\psi(t^{\TT })) = \nest (\psi (t_{1}^{\TT}))+1$. For the rest of the webs which appear in this term, a check of the list of possibilities given in \cref{SS:partialorderonwebs} shows that the action $E_{i}$ on a web raises the nesting number by at most one.  Thus the webs which appear with a nonzero coefficient in the sum after acting by $E_{i}$ must have nesting number strictly smaller than the nesting number of $\psi (t^{\TT})$.

Finally, since the action of each $T_{i}$ on a web is a $\Z[q,q^{-1}]$-linear combination of webs, the assertion about the coefficients is immediate.
\end{proof}

\begin{remark}\label{R:integral}
Let $R = \Z[q,q^{-1}]$.  Let $\HH_{d}(q)_{R}$ be the associative, unital $R$-algebra defined by the same generators and relations as in \cref{R:Heckeisom}.  Given $\lambda=(n+r,n)$, one can define the Specht $\HH_{2n+r}(q)_{R}$-module $S^{\lambda}_{R}$ just as in \cref{SS:SpechtModules} and by \cite[Theorem 5.6]{DipperJames} the standard basis is an $R$-basis for $S^{\lambda}_{R}$.  Similarly, let $W^{\lambda}_{R}$ be the free $R$-module spanned by the non-crossing webs, $\WW^{(n+r,n)}$.  As the action of the generators of $\HH_{2n+r}(q)$ on webs only involves coefficients from $R$, it follows that $W^{\lambda}_{R}$ is an $\HH_{d}(q)_{R}$-module.

Viewing $S^{\lambda}_{R}$ and $W^{\lambda}_{R}$ as $R$-submodules of $S^{\lambda}$ and $W^{\lambda}$, respectively, the previous result shows that the map $\varphi$ restricts to define an $\HH_{d}(q)_{R}$-module isomorphism 
\[
\varphi: S^{\lambda}_{R} \to W^{\lambda}_{R}.
\]  Therefore the results of this paper hold over $R$ and, via base change, over any integral domain $S$ and any choice of invertible $q \in S$.
\end{remark}

As in \cite{RT}, the previous result can be sharpened.

\begin{theorem}\label{T:uppertriangular2} Let $\lambda = (n+r,n)$ be a partition of $2n+r$ with two parts.  For every $t \in \Std (\lambda^{\TT})$, 
\[
\varphi(v_{t}) =\psi (t^{\TT}) + \sum_{\substack{w \in \WW^{(n+r,n)}\\ w \prec \psi (t^{\TT})}} c_{w,t} w,
\] where $c_{w,t} \in \Z[q,q^{-1}]$.
\end{theorem}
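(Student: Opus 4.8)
The plan is to refine \cref{T:uppertriangular} by showing that whenever a web $w$ appears with nonzero coefficient in $\varphi(v_t)$, one in fact has $w \preceq \psi(t^{\TT})$ and not merely $\nest(w) < \nest(\psi(t^{\TT}))$. This is exactly the structure of the analogous sharpening in \cite{RT}, so the argument should run in close parallel, upgrading the induction of \cref{T:uppertriangular} from a statement about nesting numbers to a statement about the partial order $\preceq$.

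First I would set up the same induction on $\Std(\lambda^{\TT})$ with respect to the weak Bruhat order. The base case $t = t^{\lambda^{\TT}}$ is immediate since $\varphi(v_0) = w_0 = \psi(t_0^{\TT})$. For the inductive step, pick $t_1 \leq_i t$, so $v_t = v_{t_1}T_i$, and write $\varphi(v_{t_1}) = \psi(t_1^{\TT}) + \sum_{w \prec \psi(t_1^{\TT})} c_{w,t_1} w$ by the inductive hypothesis. Acting by $T_i = qE_i^{0} + \cdots$, i.e.\ expanding $wT_i = qw + wE_i$, one gets a sum of terms $q c_{w,t_1} w$ and $c_{w,t_1}(wE_i)$, together with the leading contribution from $\psi(t_1^{\TT})$: namely $q\psi(t_1^{\TT}) + \psi(t_1^{\TT})E_i = q\psi(t_1^{\TT}) + \psi(t^{\TT})$, where the last equality uses \cref{P:OrderingProposition} (since $t_1 \leq_i t$ forces $\psi(t_1^{\TT}) \preceq_i \psi(t^{\TT})$, hence $\psi(t_1^{\TT})E_i = \psi(t^{\TT})$). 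Since $\psi(t_1^{\TT}) \prec \psi(t^{\TT})$, all the ``$q\psi(t_1^{\TT})$'' and ``$qc_{w,t_1}w$'' terms are fine, as $w \preceq \psi(t_1^{\TT}) \prec \psi(t^{\TT})$. So the entire content of the theorem is to show that for each $w \prec \psi(t_1^{\TT})$, every web appearing in $wE_i$ is $\preceq \psi(t^{\TT})$.

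The main obstacle is therefore this local claim: if $w \preceq \psi(t_1^{\TT})$ then every web $w'$ in the support of $wE_i$ satisfies $w' \preceq \psi(t^{\TT})$. Here one uses the case analysis of \cref{SS:partialorderonwebs}: acting by $E_i$ on a web either kills it, returns a scalar multiple of it (cases (1) and (4)), produces a web of strictly smaller nesting number (cases (2),(3),(6)), or produces a web $wE_i = w'$ with $\nest(w') = \nest(w) + 1$ and $w \preceq_i w'$ (case (5)). The delicate situations are the cases where the nesting number does not increase; there one must argue that the resulting web is still $\preceq \psi(t^{\TT})$, using that $w \preceq \psi(t_1^{\TT}) \preceq_i \psi(t^{\TT})$ and the web translations of \cref{L:WeakBruhatProperties} (parts (1)--(4), imported via \cref{P:OrderingProposition} and the remark following it). Specifically, part (4) of \cref{L:WeakBruhatProperties} --- ``if $t \leq u$, $t \leq_i t_1$, and $u \leq_i u_1$, then $t_1 \leq u_1$'' --- is the order-theoretic engine: applied in the web setting it says that acting by $E_i$ is order-preserving where it raises the nesting number, and when it does not raise the nesting number one needs the finer geometric lemmas like \cref{L:PartialOrder} together with the explicit local descriptions in \cref{SS:partialorderonwebs} to locate $wE_i$ below $\psi(t^{\TT})$ in $\preceq$. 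This is precisely the step carried out in \cite[\S5]{RT}, and I would organize it as a short sequence of sub-cases according to the list in \cref{SS:partialorderonwebs}, in each case either invoking the monotonicity of $E_i$ or one of the lemmas of \cref{SS:partialorderonwebs}/\cref{L:PartialOrder} to slot the output into the interval below $\psi(t^{\TT})$, thereby completing the induction. The coefficient claim $c_{w,t} \in \Z[q,q^{-1}]$ is inherited verbatim from \cref{T:uppertriangular}.
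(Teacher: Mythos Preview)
Your proposal is correct and follows essentially the same approach as the paper: induction on $\Std(\lambda^{\TT})$ in the weak Bruhat order, expanding $v_t = v_{t_1}T_i$ via $wT_i = qw + wE_i$, handling the $qw$ terms by $w \preceq \psi(t_1^{\TT}) \prec \psi(t^{\TT})$, and then doing a case analysis on $wE_i$ using the list in \cref{SS:partialorderonwebs}, invoking \cref{L:WeakBruhatProperties}(4) for case~(5) and \cref{L:PartialOrder} for cases~(2),(3),(6). The paper spells out the individual cases in slightly more detail (in particular showing that in cases~(2),(3),(6) one has $wE_i \preceq w$ via specific parts of \cref{L:PartialOrder}), but your outline captures the argument precisely.
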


\begin{proof} Given the previous theorem, it suffices to show $c_{w, t} \neq 0$ implies $w \preceq \psi (t^{\TT})$. The proof of this is by induction on the set $\Std (\lambda^{\TT})$ with respect to the weak Bruhat order.   The base case of $t=t_{0}=t^{\lambda^{\TT}}$ is trivial. For the inductive step, let $t \in \Std (\lambda^{\TT})$ be given and choose $t_{1} \in \Std (\lambda^{\TT})$ such that $t_{1} \leq_{i} t$ for some $i=1, \dotsc , 2n+r-1$.  By the inductive assumption, $c_{\tilde{w}, t_{1}} \neq 0$ implies $\tilde{w} \preceq \psi (t_{1}^{\TT})$ for any $\tilde{w} \in \WW^{(n+r,n)}$.

Consider, 
\begin{align*}
\varphi(v_{t}) = \varphi(v_{t_{1}})T_{i} &= \sum_{\substack{\tilde{w} \in \WW^{(n+r,n)}\\ \tilde{w} \preceq \psi (t_{1}^{\TT})}} c_{\tilde{w},t_{1}} \tilde{w}T_{i}\\
               &= \sum_{\substack{\tilde{w} \in \WW^{(n+r,n)}\\ \tilde{w} \preceq \psi (t_{1}^{\TT})}} qc_{\tilde{w},t_{1}} \tilde{w} + \sum_{\substack{\tilde{w} \in \WW^{(n+r,n)}\\ \tilde{w} \preceq \psi (t_{1}^{\TT})}} c_{\tilde{w},t_{1}} \tilde{w}E_{i}.
\end{align*}   In order for $w \in \WW^{(n+r,n)}$ to satisfy $c_{w,t} \neq 0$, either $w = \tilde{w}$ for some $\tilde{w}$ in the first sum or $w = \tilde{w}E_{i}$ for some $\tilde{w}$ in the second sum (or both).  In the first case, $w = \tilde{w} \preceq \psi (t_{1}^{\TT})$.  But since $t_{1} \leq_{i} t$, by \cref{P:OrderingProposition} $\psi (t_{1}^{\TT}) \preceq_{i} \psi (t^{\TT})$ and, hence, $w \preceq \psi (t^{\TT})$, as desired.

In the second case, $w = \tilde{w}E_{i}$. We consider the cases listed in \cref{SS:partialorderonwebs} and how they apply to $\tilde{w}$.  Obviously, Case (1)  is not relevant.  If we happen to be in Case (4), then $\langle i, i+1 \rangle$ is an arc in $\tilde{w}$, $w = -[2]_{q}\tilde{w}$, and the reasoning of the previous paragraph still applies.  If we are in Case (5), then $\tilde{w} \preceq_{i} w$.  But we also have $\tilde{w} \preceq \psi (t_{1}^{\TT})$ and $\psi (t_{1}^{\TT}) \preceq_{i} \psi (t^{\TT})$.  By the fourth claim in \cref{L:WeakBruhatProperties}, this implies $w \preceq \psi (t^{\TT})$, as desired.

The remaining scenario is if $w = \tilde{w}E_{i}$, $\nest (w) < \nest (\tilde{w})$, and one of Cases (2), (3), or (6) listed in \cref{SS:partialorderonwebs} applies to $\tilde{w}$.  In each of these cases the desired result follows from \cref{L:PartialOrder}.  Namely, if we are in Case (3), then $\langle i, y \rangle$ and $\langle i+1, x \rangle $ are arcs and $\tilde{w}=w'$ in the third pair of \cref{L:PartialOrder} (where $p$ is $i$, $i$ is $i+1$, $j$ is $x$, and $q$ is $y$),  But then $w=\tilde{w}E_{i}$ is the $w$ of that pair and therefore $w \preceq \tilde{w}$.  But since $\tilde{w} \preceq \psi (t_{1}^{\TT}) \preceq \psi (t^{\TT})$, the desired inequality holds.  If we are in Case (6), then $\langle x, i+1 \rangle$ and $\langle y, i \rangle$ are arcs in $\tilde{w}$, again $\tilde{w}=w'$ in the third pair of \cref{L:PartialOrder} (where $p$ is $x$, $i$ is $y$, $j$ is $i$, and $q$ is $i+1$), $w=\tilde{w}E_{i}$ is the $w$ of that pair  and therefore $w \preceq \tilde{w}$.  But since $\tilde{w} \preceq \psi (t_{1}^{\TT}) \preceq \psi (t^{\TT})$, the desired inequality holds.

The last case to consider is Case (2). Then $\langle i, k' \rangle$ is a through-string, $\langle i+1, x \rangle$ is an arc, and locally $\tilde{w}$ looks like the last diagram given below:
 \begin{equation*}
\begin{tikzpicture}[baseline=0ex,scale=0.4, color=\clr ]
	\begin{pgfonlayer}{nodelayer}
		\node [style=none] (0) at (1, 0) {};
		\node [style=none] (1) at (1, 0) {};
		\node [style=none] (2) at (-4.5, 0) {};
%		\node [style=none] (3) at (5, 0) {};
		\node [style=none] (4) at (1, -0.5) {\tiny{$x$}};
%		\node [style=none] (5) at (1, -1) {\tiny{$i+1$}};
		\node [style=none] (6) at (-4.5, -0.5) {\tiny{$i$}};
%		\node [style=none] (7) at (5, -1) {\tiny{$q$}};
		\node [draw] (8) at (-1, 0.5) {$d_{1}$};
%		\node [draw] (9) at (3, 0.5) {$d_{2}$};
		\node [style=none] (10) at (-3, 0) {};
		\node [style=none] (11) at (-3, -0.5) {\tiny{$i+1$}};
%		\node [style=none] (12) at (4, 0) {};
%		\node [style=none] (13) at (4, -1) {\tiny{$q-1$}};
		\node [style=none] (30) at (3, 4) {};
		\node [style=none] (31) at (3, 4.5) {\tiny{$k'$}};
%		\node [style=none] (18) at (-5, 0.5) {\tiny{$\cdots$}};
%		\node [style=none] (19) at (5, 0.5) {\tiny{$\cdots$}};
	\end{pgfonlayer}
	\begin{pgfonlayer}{edgelayer}
		\draw [thick, looseness=1.5] [out=90,in=90] (2.center) to (10.center);
		\draw [thick, looseness=1] [out=90,in=270] (0.center) to (30.center);
	\end{pgfonlayer}
\end{tikzpicture} \;\; \preceq \; \; 
\begin{tikzpicture}[baseline=0ex,scale=0.4, color=\clr ]
	\begin{pgfonlayer}{nodelayer}
		\node [style=none] (0) at (-1, 0) {};
		\node [style=none] (1) at (1, 0) {};
		\node [style=none] (2) at (-5, 0) {};
		\node [style=none] (3) at (1, 0) {};
		\node [style=none] (4) at (-1, -0.5) {\tiny{$i$}};
%		\node [style=none] (5) at (1, -0.5) {\tiny{$i+1$}};
		\node [style=none] (6) at (-5, -0.5) {\tiny{$p$}};
		\node [style=none] (7) at (1, -0.5) {\tiny{$q$}};
		\node [draw] (8) at (-3, 0.5) {$d_{1}$};
%		\node [draw] (9) at (1, 0.5) {$d_{2}$};
		\node [style=none] (30) at (3, 4) {};
		\node [style=none] (31) at (3, 4.5) {\tiny{$k'$}};
%		\node [style=none] (18) at (-5, 0.5) {\tiny{$\cdots$}};
%		\node [style=none] (19) at (5, 0.5) {\tiny{$\cdots$}};
	\end{pgfonlayer}
	\begin{pgfonlayer}{edgelayer}
		\draw [thick, looseness=1.5] [out=90,in=90] (0.center) to (2.center);
		\draw [thick, looseness=1.25] [out=270,in=90] (30.center) to (3.center);
	\end{pgfonlayer}
\end{tikzpicture}
\;\; \preceq \;\;
\begin{tikzpicture}[baseline=0ex,scale=0.4, color=\clr ]
	\begin{pgfonlayer}{nodelayer}
		\node [style=none] (0) at (1, 0) {};
		\node [style=none] (1) at (1, 0) {};
		\node [style=none] (2) at (-4.5, 0) {};
%		\node [style=none] (3) at (5, 0) {};
		\node [style=none] (4) at (1, -0.5) {\tiny{$x$}};
%		\node [style=none] (5) at (1, -1) {\tiny{$i+1$}};
		\node [style=none] (6) at (-4.5, -0.5) {\tiny{$i$}};
%		\node [style=none] (7) at (5, -1) {\tiny{$q$}};
		\node [draw] (8) at (-1, 0.5) {$d_{1}$};
%		\node [draw] (9) at (3, 0.5) {$d_{2}$};
		\node [style=none] (10) at (-3, 0) {};
		\node [style=none] (11) at (-3, -0.5) {\tiny{$i+1$}};
%		\node [style=none] (12) at (4, 0) {};
%		\node [style=none] (13) at (4, -1) {\tiny{$q-1$}};
		\node [style=none] (30) at (3, 4) {};
		\node [style=none] (31) at (3, 4.5) {\tiny{$k'$}};
%		\node [style=none] (18) at (-5, 0.5) {\tiny{$\cdots$}};
%		\node [style=none] (19) at (5, 0.5) {\tiny{$\cdots$}};
	\end{pgfonlayer}
	\begin{pgfonlayer}{edgelayer}
		\draw [thick, looseness=1.5] [out=90,in=90] (10.center) to (0.center);
		\draw [thick, looseness=1] [out=90,in=270] (2.center) to (30.center);
	\end{pgfonlayer}
\end{tikzpicture}\; .
\end{equation*}
The first diagram is $w = \tilde{w}E_{i}$.  The first inequality holds by using the second pair of webs in \cref{L:PartialOrder}, possibly multiple times, and the second inequality holds by using the fourth pair of webs in \cref{L:PartialOrder}  (where $p$ is $i$, $p+1$ is $i+1$, $i+1$ is $x$, and $k'$ is $k'$).  Once again, $w \preceq \tilde{w} \preceq \psi (t_{1}^{\TT}) \preceq \psi (t^{\TT})$, as desired.

In short, in all cases, if $c_{w, t} \neq 0$, then $w \preceq \psi (t^{\TT})$ and the assertion is proven.
\end{proof}

\subsection{Positivity}\label{SS:positivity}

The goal of this section is to prove the following result.

\begin{theorem}\label{T:positivity}  Let $\lambda = (n+r,n)$ be a partition of $2n+r$.  For every $t \in \Std (\lambda^{\TT})$,
\[
\varphi (v_{t}) = \sum_{w \in \WW^{(n+r,n)}} c_{w,t} w
\] with $c_{w,t} \in \Z_{\geq 0}[q]$ for all $w \in \WW^{(n+r,n)}$.
\end{theorem}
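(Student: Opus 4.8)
The plan is to prove the theorem by induction on $\Std(\lambda^{\TT})$ with respect to the weak Bruhat order, running the same induction as in \cref{T:uppertriangular,T:uppertriangular2} but now tracking signs and the powers of $q$ that occur. The base case $t = t_0 = t^{\lambda^{\TT}}$ is immediate from $\varphi(v_0) = w_0$ (\cref{T:TVW}). For the inductive step, choose $t_1 <_i t$ with $t = t_1.s_i$, so that $v_t = v_{t_1}T_i$ and hence
\[
\varphi(v_t) = \varphi(v_{t_1})T_i = q\,\varphi(v_{t_1}) + \sum_{w} c_{w,t_1}\, wE_i,
\]
where by the inductive hypothesis each $c_{w,t_1} \in \Z_{\geq 0}[q]$ and, by \cref{T:uppertriangular2}, the sum runs over webs $w \preceq \psi(t_1^{\TT})$.

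Next I would run through the six behaviours of $E_i$ on a web listed in \cref{SS:partialorderonwebs}. When $i,i+1$ are both through-strings, or we are in Case (1), (2), (3), (5), or (6), the contribution $q\,c_{w,t_1}w + c_{w,t_1}wE_i$ is a non-negative combination of webs with coefficients in $\Z_{\geq 0}[q]$ (one gets $q\,c_{w,t_1}w$ together with at most one further web, appearing with coefficient $+c_{w,t_1}$), and in particular no negative power of $q$ is introduced. The single exception is Case (4): here $\langle i,i+1\rangle$ is an arc of $w$ and $wE_i = -[2]_q\,w$, so the two contributions combine to $q\,c_{w,t_1}w - [2]_q\,c_{w,t_1}w = -q^{-1}c_{w,t_1}w$, which is negative and carries a negative power of $q$. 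Thus the whole theorem reduces to showing that, for each web $u$ possessing an arc $\langle i,i+1\rangle$, the coefficient of $u$ in $\varphi(v_t)$, namely
\[
-q^{-1}c_{u,t_1} + \sum_{\substack{w \neq u\\ wE_i = u}} c_{w,t_1},
\]
lies in $\Z_{\geq 0}[q]$.

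An elementary local computation with the resolution \cref{E:crossingrelation} and the bubble relation \cref{E:bubblerelation} identifies the ``rescuers'' of $u$: the webs $w \neq u$ with $wE_i = u$ are exactly those obtained from $u$ by deleting the arc $\langle i,i+1\rangle$ together with one other edge of $u$ bounding the same face as that arc, then reconnecting in the unique planar way; such a rescuer can arise through Case (2), (3), (5), or (6). One further checks, using \cref{P:OrderingProposition}, that the leading web $\psi(t_1^{\TT})$ is never in Case (4) for $i$ (it has $i$ as an $R$-vertex and $i+1$ as an $L$-vertex), and that the $T_i$-action sends it through Case (5) to $\psi(t_1^{\TT})E_i = \psi(t^{\TT})$, the new leading web. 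So the potential Case-(4) victims at each step are the non-leading webs, and for these the bad term $-q^{-1}c_{u,t_1}u$ must be absorbed by their rescuers.

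The main obstacle — and the step that genuinely generalizes the positivity argument of Rhoades \cite{Rhoades} from $q=1$, $a=b$ to arbitrary $q$ and arbitrary two-row $\lambda$ — is to set up an inductive hypothesis strong enough to be self-propagating under every $T_i$. It is not enough to carry only ``$c_{w,t}\in\Z_{\geq 0}[q]$''; I would strengthen it to record, for every index $j$ and every non-leading web $u$ having an arc $\langle j,j+1\rangle$, both the divisibility $c_{u,t}\in q\,\Z_{\geq 0}[q]$ and the domination
\[
\sum_{\substack{w \neq u\\ wE_j = u}} c_{w,t} \;-\; q^{-1}c_{u,t} \;\in\; \Z_{\geq 0}[q].
\]
The base case is trivial, and at the inductive step one already knows the new leading web is $\psi(t^{\TT})$; the work is to verify that these divisibility-and-domination conditions for $\varphi(v_{t_1})$ force the same conditions for $\varphi(v_{t_1})T_i = \varphi(v_t)$, simultaneously for all $j$ and all arcs that can occur — a delicate bookkeeping over how arcs are created, destroyed, and moved by the web relations, which is where the bulk of the effort lies. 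Once this strengthened statement is established, positivity and the absence of negative powers of $q$ both follow at once, completing the induction and hence \cref{T:positivity}.
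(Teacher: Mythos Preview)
Your proposal is not a proof but a strategy outline, and the crucial step---showing that your ``divisibility-and-domination'' hypothesis propagates under every $T_i$, for every index $j$ simultaneously---is explicitly deferred (``which is where the bulk of the effort lies''). That step is the entire content of the theorem. You use the domination condition at $j=i$ to get positivity at the current step, but to re-establish the condition for all $j$ after applying $T_i$ you must control how the rescuer structure for each arc $\langle j,j+1\rangle$ transforms under $E_i$. For $|i-j|\le 1$ the local pictures interact, arcs are created and destroyed, and the set $\{w:wE_j=u\}$ changes in ways you have not analyzed. Nothing you have written rules out a cascade in which the cancellation needed at one step consumes the margin required for the next. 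Without that analysis there is no induction.

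The paper avoids this bookkeeping altogether by a geometric argument. One writes $\varphi(v_t)=w_0\beta_{i_1}\dotsb\beta_{i_r}$ as a single $(n+r,n)$-web with crossings, and observes that because each $\beta_{i_j}$ crosses an $R$-edge under an $L$-edge, every bounded region of this diagram is a bigon, a triangle, or a square, with a further constraint on how these can be stacked vertically. One then shows (Proposition~\ref{L:positivity}) by induction on the number of crossings that any web satisfying these conditions can be resolved into non-crossing webs using only the crossing relation \cref{E:crossingrelation} and the Reidemeister~II move of \cref{L:RII}, both of which have coefficients in $\Z_{\ge 0}[q]$; the bubble relation \cref{E:bubblerelation} is never invoked. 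The point is that by choosing which crossing to resolve carefully (the northwest corner of a bounded region whose $N$ and $W$ neighbours are already unbounded), the resulting webs again satisfy the same structural conditions, so the induction goes through without tracking any numerical inequalities among coefficients.

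Your approach might be salvageable, but as written it is a conjecture about a strengthened inductive hypothesis, not a proof; the paper's route sidesteps the issue entirely by working with one crossed diagram rather than inducting on a linear combination.
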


 By \cref{E:standardbasis}, $v_{t}=v_{0}T_{i_{1}}\dotsb T_{i_{r}}$ where $s_{i_{1}}\dotsb s_{i_{r}}$ is a reduced word in $S_{d}$ and 
\begin{equation}\label{E:weakbruhatX}
t^{\lambda^{\TT}} =t_{0}\leq_{i_{1}} t_{1} \leq_{i_{2}} \dotsb  \leq_{i_{r}} t_{r}= t
\end{equation}
in the weak Bruhat order.  Then,
\begin{equation}\label{E:CrossingWeb}
\varphi(v_{t}) = \varphi(v_{0}T_{i_{1}}\dotsb T_{i_{r}})=\varphi(v_{0})T_{i_{1}}\dotsb T_{i_{r}} = w_{0}\beta_{i_{1}}\dotsb \beta_{i_{r}}.
\end{equation}

The proof strategy is straightforward enough.  The right hand side of \cref{E:CrossingWeb} is an $(n+r,n)$-web with crossings.  Since relation \cref{E:crossingrelation} only involves coefficients in $\Z_{\geq 0}[q]$, if the web could be rewritten into the basis of non-crossing webs using only this relation, then the same would be true of the coefficients in the resulting linear combination.  However, if one simply applies \cref{E:crossingrelation} to each crossing, then it is quite possible that bubbles will appear. This will generate undesirable coefficients when they are popped using relation \cref{E:bubblerelation}.  The example in \cref{SS:ExampleI} shows how this can happen.  The key observation, in that example and in general, is that the Reidemiester II moves from \cref{L:RII} also have coefficients in $\Z_{\geq 0}[q]$ and  can be used along with relation \cref{E:crossingrelation} to eliminate all crossings in a way which entirely avoids the dreaded bubble popping relation.

To show how this can be done we first need some terminology.  Given an $(n+r,n)$-web with crossings consider the complement of its projection onto the plane.  We call a connected component of the complement \emph{bounded} if the closure of the connected component is compact.  Otherwise we call it \emph{unbounded}.    Given a web, we refer to these components as bounded and unbounded regions, respectively.   For example, the interior of a bubble is a bounded region.

\begin{example} The $(3,3)$-web given in \cref{SS:ExampleI} has the following projection onto the plane: 
\[\label{E:Example1}
\begin{tikzpicture}[baseline=0.5cm,scale=.5, color=\clr ]
	\begin{pgfonlayer}{nodelayer}
		\node [style=none] (0) at (1, 0) {};
		\node [style=none] (1) at (2, 3) {};
		\node [style=none] (2) at (4, 3) {};
		\node [style=none] (3) at (4, 0) {};
		\node [style=none] (4) at (-2, 0) {};
		\node [style=none] (5) at (-1, 2) {};
		\node [style=none] (6) at (1, 2) {};
		\node [style=none] (7) at (-4, 0) {};
		\node [style=none] (8) at (-5, 2) {};
		\node [style=none] (9) at (-2, 2) {};
		\node [style=none] (10) at (-4, 2) {};
		\node [style=none] (11) at (2, 0) {};
		\node [style=none] (12) at (3, 0) {};
		\node [style=none] (13) at (-1.75, 1.62) {};
		\node [style=none] (14) at (-1, 1.28) {};
		\node [style=none] (15) at (0.75, 0.77) {};
		\node [style=none] (16) at (1.5, 0.5) {};
		\node [style=none] (17) at (1.25, 1.6) {};
		\node [style=none] (18) at (1.75, 1.2) {};
		\node [style=none] (20) at (0, 1.75) {B};
	\end{pgfonlayer}
	\begin{pgfonlayer}{edgelayer}
		\draw [thick, looseness=1.5] [out=90,in=90]  (1.center) to (2.center);
		\draw [thick]  (2.center) to (3.center);
		\draw [thick]  (7.center) to (10.center);
		\draw [thick, looseness=1.5] [out=90,in=90]  (10.center) to (9.center);
		\draw [thick, looseness=0.5] [out=270,in=90] (9.center) to (11.center);
%		\draw [ultra thick, color=white]  (13.center) to (14.center);
%		\draw [ultra thick, color=white]  (15.center) to (16.center);
		\draw [thick, looseness=0.5] [out=90,in=270] (4.center) to (5.center);
		\draw [thick, looseness=1.5] [out=90,in=90]  (5.center) to (6.center);
		\draw [thick] [out=270,in=90, looseness=0.5]  (6.center) to (12.center);
%		\draw [ultra thick, color=white]  (17.center) to (18.center);
		\draw [thick, looseness=0.5] [out=90, in=270] (0.center) to (1.center);
	\end{pgfonlayer}
\end{tikzpicture} \; \; .
\]  The region marked with a $B$ is bounded whereas the other regions of the complement are path-connected to the unbounded part of the plane and so are are unbounded.
\end{example}

Observe that if one applies \cref{E:crossingrelation} to eliminate a crossing, then in each of the two resulting webs a pair of regions have been merged together (namely, the first term merges the regions above and below the crossing while the second term merges the regions on each side of the crossing).  In particular, the application of this relation does not create entirely new bounded regions. The danger lies in the existing bounded regions of the initial web as one of them may become a bubble as crossings are eliminated.  Thus the proof will focus on what happens to bounded regions as relation \cref{E:crossingrelation} and Reidemiester II moves are applied.

Now consider \cref{E:weakbruhatX}.  As discussed in \cref{R:weakBruhatOrder}, $t_{j-1} \leq_{i_{j}}  t_{j}$ if and only if $t_{j}=t_{j-1}.s_{i_{j}}$ and $i_{j}$ appears before $i_{j}+1$ when reading the entries of $t_{j-1}$ left-to-right, top-to-bottom.  Since $t_{j-1}, t_{j} \in \Std ((n+r,n)^{\TT})$, it follows that $i_{j}$ is in the second column of $(n+r,n)^{\TT}$ while $i_{j}+1$ is in the first column (much as in  the proof of \cref{P:OrderingProposition}).  That is, $i_{j}$ is in the second row of $t_{j}^{\TT}$ while $i_{j}+1$ is in the first row.  That is, $i_{j}$ is an $R$-vertex of $\psi (t_{j}^{\TT})$
 while $i_{j}+1$ is an $L$-vertex.  That is, for each $j=1, \dotsc , r$, when $\beta_{i_{j}}$ is concatenated on the bottom of $w_{0}\beta_{i_{1}}\dotsb \beta_{i_{j-1}}$ in the construction of $\varphi(v_{t})$, $i_{j}$ is an $R$-vertex of $w_{0}\beta_{i_{1}}\dotsb \beta_{i_{j-1}}$ while $i_{j}+1$ is an $L$-vertex.

In short, in the drawing of the $(n+r,n)$-web $\varphi(v_{t})$, as one reads the diagram from top-to-bottom each crossing consists of an edge connected to an $R$-vertex passing under an edge connected to a $L$-vertex.  If we were to label the vertices by their type, then locally each crossing would look like
\[
\begin{tikzpicture}[baseline=-2ex,scale=.5, color=\clr ]
	\begin{pgfonlayer}{nodelayer}
		\node [style=none] (0) at (-1, -1.5) {};
		\node [style=none] (1) at (1, -1.5) {};
		\node [style=none] (2) at (-1, 1.5) {};
		\node [style=none] (3) at (1, 1.5) {};
		\node [style=none] (4) at (0.28, -0.4) {};
		\node [style=none] (5) at (-0.28, 0.4) {};
		\node [style=none] (10) at (-1, -2.25) {L};
		\node [style=none] (11) at (1, -2.25) {R};
		\node [style=none] (12) at (-1, 2.25) {R};
		\node [style=none] (13) at (1, 2.25) {L};
	\end{pgfonlayer}
	\begin{pgfonlayer}{edgelayer}
		\draw [thick] (2.center) to (1.center);
		\draw [ultra thick, color=white] (4.center) to (5.center);
		\draw [thick] (3.center) to (0.center);
%		\draw [thick, dashed, color=gray] (0,0) circle [radius=2];
	\end{pgfonlayer}
\end{tikzpicture} \; .
\]

This observation allows us to verify the diagram drawn for $\varphi(v_{t})$ satisfies the following conditions.
\begin{enumerate}
\item [(B1)]  The diagram for $\varphi(v_{t})$ contains no isolated connected components (i.e., no bubbles).
\item [(B2)]  Every bounded region of $\varphi(v_{t})$ is a bigon, triangle, or square. 
\item [(B3)]  If a bounded region of $\varphi(v_{t})$ is adjacent to second bounded region and is in the location of an S in \cref{E:TriangleSquare} relative to that second bounded region, then the first bounded region is a square.
\end{enumerate}

Condition $(B1)$ holds because $\varphi(v_{t})$ contains no local minimums other than the vertices at the bottom.  If $\varphi(v_{t})$ contains a bounded region with a side not involving one of the edges of $w_{0}$, then the side is formed by an edge connecting $R$-vertices which passes under two successive edges connecting $L$-vertices, or the side is formed by an edge connecting $L$-vertices which passes over two successive edges connecting $R$-vertices.  A bounded region with zero sides of this kind would be an arc whose legs cross --- this is forbidden by our observation about crossings in $\varphi(v_{t})$.  A bounded region with one side of this kind would involve the one side crossing the same arc twice --- this again would violate the observation about crossings in $\varphi(v_{t})$.  Consequently the bounded region must have two or more such sides.  This implies that the bottom part of the bounded region looks like the bottom half of a triangle or square as in \cref{E:TriangleSquare}. Likewise, if the top half of the bounded region also does not involve an edge of $w_{0}$, then the bounded region is a rectangle.  If it does involve an edge in $w_{0}$, then the bounded region is necessarily  a triangle.  Thus $\varphi(v_{t})$ satisfies condition $(B2)$ (bigons do not appear in $\varphi(v_{t})$ but it is convenient to include them as part of the condition).  Finally, since the bounded regions which lie in a location marked by S in \cref{E:TriangleSquare} cannot involve edges of $w_{0}$, the previous discussion also implies $\varphi(v_{t})$ satisfies condition $(B3)$.

\cref{T:positivity} will follow from the following result.

\begin{prop}\label{L:positivity}  If $u$ is an $(n+r,n)$-web which satisfies conditions $(B1)-(B3)$, then
\[
u \in  \bigoplus_{w \in \WW^{(n+r,n)}} \Z_{\geq 0}[q] w.
\]
\end{prop}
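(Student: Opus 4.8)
The plan is to prove Proposition~\ref{L:positivity} by induction on the number of crossings in $u$ together with a secondary induction on the number of bounded regions. If $u$ has no crossings then it is already a non-crossing web (after possibly scaling away bubbles, but by $(B1)$ there are none), and the claim is trivial with coefficient $1$. For the inductive step, I would identify a crossing that can be removed ``safely,'' meaning that resolving it via the crossing relation~\eqref{E:crossingrelation} produces two webs each of which again satisfies $(B1)$--$(B3)$ with one fewer crossing, so that neither term creates a bubble. Since~\eqref{E:crossingrelation} has coefficients $q$ and $1$, both in $\Z_{\geq 0}[q]$, and the Reidemeister~II moves of \cref{L:RII} likewise have coefficient $q \in \Z_{\geq 0}[q]$, any such sequence of rewrites keeps all coefficients in $\Z_{\geq 0}[q]$, and the result follows.

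The heart of the argument is the case analysis locating a safe crossing. First I would handle bigons: if $u$ contains a bigon, then by \cref{L:RII} it can be removed at the cost of a factor of $q$, strictly decreasing the crossing count while preserving $(B1)$--$(B3)$ (one checks that removing a bigon cannot create a bubble or destroy the structure of neighboring bounded regions, using $(B3)$ to rule out the problematic adjacency). So I may assume $u$ has no bigons. Next, among all bounded regions, choose one, call it $\rho$, that is innermost in an appropriate sense — for instance, a bounded region whose ``N'' corner (in the notation of \eqref{E:TriangleSquare}) is a local maximum of the diagram, or more precisely a triangle or square none of whose ``S''-adjacent regions is bounded. Condition $(B3)$ guarantees that the regions sitting below such a $\rho$ are unbounded. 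Resolving the crossing at the top-left (the ``NW'' crossing) of $\rho$ using~\eqref{E:crossingrelation}: in the first term the N and E regions merge, in the second the N and W regions merge. In either term, because everything strictly inside or adjacent-below $\rho$ is unbounded, no new bounded region is born and no existing bounded region becomes a bubble; moreover one argues that $(B2)$ and $(B3)$ persist for the remaining bounded regions, since a triangle loses a vertex to become a bigon (handled next round) or a square becomes a triangle, and the S-adjacency condition is inherited. This is exactly the local picture analyzed around \eqref{E:TriangleSquare}, and the bookkeeping parallels the combinatorics in \cite{RT} and the worked example in \cref{SS:ExampleI}.

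If $u$ has crossings but \emph{no} bounded region at all, then every crossing, once resolved, still yields no bounded region, so in particular no bubbles; here one simply resolves any outermost crossing (one whose strands emanate toward the unbounded region on at least one side) and recurses. Combining: either $u$ has a bigon (remove it), or $u$ has a bounded region (pick an innermost triangle/square and resolve its NW crossing), or $u$ has crossings but no bounded region (resolve any crossing) — in all cases we strictly decrease the crossing count through rewrites with coefficients in $\Z_{\geq 0}[q]$ and preserve $(B1)$--$(B3)$, so induction closes. \textbf{The main obstacle} I anticipate is verifying rigorously that conditions $(B1)$--$(B3)$ are genuinely preserved under each resolution step — in particular that merging the N region with the E or W region in a triangle/square cannot inadvertently turn some \emph{other} bounded region into a bubble, and that the "S must be a square" condition $(B3)$ is exactly strong enough to prevent a triangle's descendant from becoming a bubble after one of its sides is straightened. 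This is a finite but delicate local check on the handful of diagram shapes in \eqref{E:TriangleSquare}, and getting the invariant statement precisely right (so that the induction actually goes through) is where the real work lies.
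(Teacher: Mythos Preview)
Your overall inductive framework matches the paper's: induct on the number of crossings, resolve using \eqref{E:crossingrelation} and the Reidemeister~II moves of \cref{L:RII} (both with coefficients in $\Z_{\geq 0}[q]$), and check that $(B1)$--$(B3)$ persist so that no bubble ever forms. The case of no bounded regions is also handled the same way.

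However, your selection criterion for the bounded region to attack is inverted, and this is a genuine gap. You propose choosing $\rho$ to be a region whose $S$-adjacent regions are unbounded (a ``bottom-most'' region) and then resolving a top crossing of $\rho$. But condition $(B3)$ is not symmetric under vertical reflection: it constrains the $S$-neighbors of a bounded region (forcing them to be squares), and says nothing about $N$- or $W$-neighbors. When you resolve the $NW$ crossing of a triangle (or the $N$ crossing of a square), the regions that merge with $\rho$ or with each other are the $N$- and $W$-neighbors, not the $S$-neighbors. If $N$ or $W$ is bounded, the merge can produce a bounded region that is no longer a bigon, triangle, or square, so $(B2)$ fails and your induction breaks; $(B3)$ gives you no control here. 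Your remark that ``$(B3)$ guarantees that the regions sitting below such a $\rho$ are unbounded'' is also not correct --- that is simply your selection hypothesis, not a consequence of $(B3)$.

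The paper instead selects a bounded region whose $N$- and $W$-neighbors are \emph{unbounded} (a ``top-left-most'' region, which exists since there are only finitely many bounded regions). Resolving the appropriate top crossing then merges $\rho$ with an unbounded region, so $\rho$ either becomes unbounded or shrinks to a smaller valid shape (square $\to$ triangle, triangle $\to$ bigon). The only \emph{bounded} neighbors that can change shape are the $S$-neighbors of $\rho$, and $(B3)$ guarantees these are squares, which downgrade cleanly to triangles or bigons. Thus $(B3)$ is indeed ``exactly strong enough,'' as you suspected, but it is calibrated for a top-down sweep rather than a bottom-up one. The fix is simply to flip your selection criterion: choose $\rho$ with $N$ and $W$ unbounded, not with $S$ unbounded. (A minor geometric point: in the paper's labeling a square has crossings at $N$, $E$, $S$, $W$, not at $NW$; the crossing to resolve is the $N$ corner.)
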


\begin{proof} The proof is by induction on the number of crossings in $u$ with the base case being trivial.  For the inductive step, consider the following ordered list of cases.  Observe that they cover all possible scenarios. 

\begin{itemize}
\item [Case 1:] \hspace{\parindent} If $u$ contains no bounded regions, then choose a crossing at random and apply relation \cref{E:crossingrelation}.  The result will be a $\Z_{\geq 0}[q]$-linear combination of webs with no bounded regions and fewer crossings.  Since these webs have no bounded regions, conditions $(B1)-(B3)$ are vacuously satisfied.
\item [Case 2:] \hspace{\parindent} If $u$ contains a bounded region, then choose a bounded region in $u$ for which the neighboring regions marked with $N$ and $W$ are unbounded.  Since $u$ only contains finitely many bounded regions, such a region necessarily exists.
\begin{itemize}
\item [Case 2a:]\hspace{\parindent}   If the selected region is a bigon, then apply the Reidemister II relation given in \cref{L:RII}.  The result will be a $\Z_{\geq 0}[q]$-multiple of a web with fewer crossings.  Since the region in $u$ marked with a W is unbounded, the same will be true of the regions marked with W and E after applying the Reidemister II move.  The region marked with a S in the picture of the bigon in \cref{E:TriangleSquare} is either unbounded or a square, and either remains unbounded or becomes a bigon after the application of the Reidemister II relation. From these observations it is easy to check that the resulting web still satisfies conditions $(B1)-(B3)$.
\item [Case 2b:] \hspace{\parindent} If the selected region is a triangle, then apply relation \cref{E:crossingrelation} to the crossing at the northwest corner of the triangle. Locally, the result will be:
\begin{equation*}
\begin{tikzpicture}[baseline=0ex,scale=.5, color=\clr ]
	\begin{pgfonlayer}{nodelayer}
		\node [style=none] (0) at (1, 1) {};
		\node [style=none] (1) at (-3, -1) {};
		\node [style=none] (2) at (1, 1) {};
		\node [style=none] (3) at (-1, 1) {};
		\node [style=none] (6) at (1, 1) {};
		\node [style=none] (7) at (3, -1) {};
		\node [style=none] (8) at (1.75, 0.25) {};
		\node [style=none] (9) at (3, 1) {};
		\node [style=none] (10) at (-1, -3) {};
		\node [style=none] (11) at (2.25, -0.25) {};
		\node [style=none] (12) at (-1, -1) {};
		\node [style=none] (13) at (-0.25, -1.75) {};
		\node [style=none] (14) at (1, -3) {};
		\node [style=none] (15) at (-1.75, -0.25) {};
		\node [style=none] (16) at (0.25, -2.25) {};
		\node [style=none] (17) at (-3, 1) {};
		\node [style=none] (18) at (-1, -1) {};
		\node [style=none] (19) at (-2.25, 0.25) {};
		\node [style=none] (20) at (0, 3) {};
		\node [style=none] (21) at (0, 2.25) {N};
		\node [style=none] (22) at (-2, 2) {};
		\node [style=none] (23) at (2, 2) {};
		\node [style=none] (24) at (-3, 0) {W};
		\node [style=none] (25) at (3, 0) {};
		\node [style=none] (26) at (2, -2) {};
		\node [style=none] (27) at (0, -3) {Z};
		\node [style=none] (28) at (-2, -2) {X};
		\node [style=none] (29) at (2, -2) {Y};
	\end{pgfonlayer}
	\begin{pgfonlayer}{edgelayer}
		\draw [thick, looseness=1.5] [out=45,in=135] (3.center) to (6.center);
		\draw [thick] (14.center) to (17.center);
		\draw [ultra thick, color=white] (19.center) to (15.center);
		\draw [ultra thick, color=white] (13.center) to (16.center);
		\draw [thick] (1.center) to (3.center);
		\draw [thick] (0.center) to (2.center);
		\draw [thick] (7.center) to (6.center);
		\draw [ultra thick, color=white] (8.center) to (11.center);
		\draw [thick] (10.center) to (9.center);
%		\draw [thick] (14.center) to (18.center);
%		\draw [thick, dashed, color=gray] (0,0) circle [radius=4];
	\end{pgfonlayer}
\end{tikzpicture}
\; \; = \; \;
q\; \; 
\begin{tikzpicture}[baseline=0ex,scale=.5, color=\clr ]
	\begin{pgfonlayer}{nodelayer}
		\node [style=none] (0) at (1, 1) {};
		\node [style=none] (1) at (-3, -1) {};
		\node [style=none] (2) at (1, 1) {};
		\node [style=none] (3) at (-1, 1) {};
		\node [style=none] (6) at (1, 1) {};
		\node [style=none] (7) at (3, -1) {};
		\node [style=none] (8) at (1.75, 0.25) {};
		\node [style=none] (9) at (3, 1) {};
		\node [style=none] (10) at (-1, -3) {};
		\node [style=none] (11) at (2.25, -0.25) {};
		\node [style=none] (12) at (-1, -1) {};
		\node [style=none] (13) at (-0.25, -1.75) {};
		\node [style=none] (14) at (1, -3) {};
		\node [style=none] (15) at (-1.75, -0.25) {};
		\node [style=none] (16) at (0.25, -2.25) {};
		\node [style=none] (17) at (-3, 1) {};
		\node [style=none] (18) at (-1, -1) {};
		\node [style=none] (19) at (-2.25, 0.25) {};
		\node [style=none] (20) at (0, 3) {};
		\node [style=none] (21) at (0, 2.25) {N};
		\node [style=none] (22) at (-2, 2) {};
		\node [style=none] (23) at (2, 2) {};
		\node [style=none] (24) at (-3, 0) {W};
		\node [style=none] (25) at (3, 0) {};
		\node [style=none] (26) at (2, -2) {};
		\node [style=none] (27) at (0, -3) {Z};
		\node [style=none] (28) at (-2, -2) {X};
		\node [style=none] (29) at (2, -2) {Y};
		\node [style=none] (90) at (-2.25, -0.25) {};
		\node [style=none] (91) at (-1.75, 0.25) {};
	\end{pgfonlayer}
	\begin{pgfonlayer}{edgelayer}
		\draw [thick, looseness=1.5] [out=45,in=135] (3.center) to (6.center);
		\draw [thick] (14.center) to (15.center);
		\draw [ultra thick, color=white] (13.center) to (16.center);
		\draw [thick] (17.center) to (19.center);
%		\draw [thick] (13.center) to (16.center);
		\draw [thick] (1.center) to (90.center);
		\draw [thick] (0.center) to (2.center);
		\draw [thick] (7.center) to (6.center);
		\draw [ultra thick, color=white] (8.center) to (11.center);
		\draw [thick] (10.center) to (9.center);
		\draw [thick] (3.center) to (91.center);
		\draw [thick, looseness=1.5] [out=45,in=315] (90.center) to (19.center);
		\draw [thick, looseness=1.5] [out=225,in=135] (91.center) to (15.center);
%		\draw [thick] (14.center) to (18.center);
%		\draw [thick, dashed, color=gray] (0,0) circle [radius=4];
	\end{pgfonlayer}
\end{tikzpicture} 
\; \; + \; \;
\begin{tikzpicture}[baseline=0ex,scale=.5, color=\clr ]
	\begin{pgfonlayer}{nodelayer}
		\node [style=none] (0) at (1, 1) {};
		\node [style=none] (1) at (-3, -1) {};
		\node [style=none] (2) at (1, 1) {};
		\node [style=none] (3) at (-1, 1) {};
		\node [style=none] (6) at (1, 1) {};
		\node [style=none] (7) at (3, -1) {};
		\node [style=none] (8) at (1.75, 0.25) {};
		\node [style=none] (9) at (3, 1) {};
		\node [style=none] (10) at (-1, -3) {};
		\node [style=none] (11) at (2.25, -0.25) {};
		\node [style=none] (12) at (-1, -1) {};
		\node [style=none] (13) at (-0.25, -1.75) {};
		\node [style=none] (14) at (1, -3) {};
		\node [style=none] (15) at (-1.75, -0.25) {};
		\node [style=none] (16) at (0.25, -2.25) {};
		\node [style=none] (17) at (-3, 1) {};
		\node [style=none] (18) at (-1, -1) {};
		\node [style=none] (19) at (-2.25, 0.25) {};
		\node [style=none] (20) at (0, 3) {};
		\node [style=none] (21) at (0, 2.25) {N};
		\node [style=none] (22) at (-2, 2) {};
		\node [style=none] (23) at (2, 2) {};
		\node [style=none] (24) at (-3, 0) {W};
		\node [style=none] (25) at (3, 0) {};
		\node [style=none] (26) at (2, -2) {};
		\node [style=none] (27) at (0, -3) {Z};
		\node [style=none] (28) at (-2, -2) {X};
		\node [style=none] (29) at (2, -2) {Y};
		\node [style=none] (90) at (-2.25, -0.25) {};
		\node [style=none] (91) at (-1.75, 0.25) {};
	\end{pgfonlayer}
	\begin{pgfonlayer}{edgelayer}
		\draw [thick, looseness=1.5] [out=45,in=135] (3.center) to (6.center);
		\draw [thick] (14.center) to (15.center);
		\draw [ultra thick, color=white] (13.center) to (16.center);
		\draw [thick] (19.center) to (17.center);
%		\draw [thick] (13.center) to (16.center);
		\draw [thick] (1.center) to (90.center);
		\draw [thick] (3.center) to (91.center);
		\draw [thick] (0.center) to (2.center);
		\draw [thick] (7.center) to (6.center);
		\draw [ultra thick, color=white] (8.center) to (11.center);
		\draw [thick] (10.center) to (9.center);
		\draw [thick, looseness=1.5] [in=315,out=225] (91.center) to (19.center);
		\draw [thick, looseness=1.5] [out=45,in=135] (90.center) to (15.center);
%		\draw [thick] (14.center) to (18.center);
%		\draw [thick, dashed, color=gray] (0,0) circle [radius=4];
	\end{pgfonlayer}
\end{tikzpicture} \;.
\end{equation*}   \hspace{\parindent} We claim both webs on right hand side of the equality still satisfy conditions $(B1)-(B3)$.  The conditions are verified by checking each bounded region one-by-one.  Since the only regions of $u$ affected by resolving the crossing are the ones which appear in the picture, it suffices to study these in detail.  First, note that no bubbles were created so condition $(B1)$ is satisfied by both. Furthermore, any unbounded regions in $u$ (e.g., those marked with N and W) remain unbounded in the right hand side of the equality and so can be ignored.

\hspace{\parindent} \hspace{\parindent} Now, in the first diagram on the right side of the equality the triangle was turned into a bigon, the region marked with an X was made unbounded, and the regions marked with a Y and Z remain unchanged (in particular, if one or both were bounded in $u$ then they were a rectangle and remain a rectangle in the new diagram), so both conditions $(B2)$ and $(B3)$ are still satisfied.  In the second diagram the triangle became an unbounded region and if X marked a bounded region of $u$, then it was rectangle and becomes a triangle in the new diagram.   Thus both of the diagrams on the right hand side have fewer crossings than $u$ and still satisfy conditions $(B1)-(B3)$. 
\item [Case 2c:] \hspace{\parindent} If the selected region is a square, then apply relation \cref{E:crossingrelation} to the crossing at the northern corner of the square. Locally, the result will be:
\begin{equation*}
\begin{tikzpicture}[baseline=0ex,scale=.5, color=\clr ]
	\begin{pgfonlayer}{nodelayer}
		\node [style=none] (0) at (1, 1) {};
		\node [style=none] (1) at (-3, -1) {};
		\node [style=none] (2) at (-1, 3) {};
		\node [style=none] (3) at (1, 3) {};
		\node [style=none] (4) at (-0.25, 2.25) {};
		\node [style=none] (5) at (0.25, 1.75) {};
		\node [style=none] (6) at (1, 1) {};
		\node [style=none] (7) at (3, -1) {};
		\node [style=none] (8) at (1.75, 0.25) {};
		\node [style=none] (9) at (3, 1) {};
		\node [style=none] (10) at (-1, -3) {};
		\node [style=none] (11) at (2.25, -0.25) {};
		\node [style=none] (12) at (-1, -1) {};
		\node [style=none] (13) at (-0.25, -1.75) {};
		\node [style=none] (14) at (1, -3) {};
		\node [style=none] (15) at (-1.75, -0.25) {};
		\node [style=none] (16) at (0.25, -2.25) {};
		\node [style=none] (17) at (-3, 1) {};
		\node [style=none] (18) at (-1, -1) {};
		\node [style=none] (19) at (-2.25, 0.25) {};
		\node [style=none] (20) at (0, 3) {N};
		\node [style=none] (21) at (0, 3) {N};
		\node [style=none] (22) at (-2, 2) {N};
		\node [style=none] (23) at (2, 2) {N};
		\node [style=none] (24) at (-3, 0) {W};
%		\node [style=none] (25) at (3, 0) {E};
		\node [style=none] (26) at (2, -2) {S};
		\node [style=none] (27) at (0, -3) {S};
		\node [style=none] (28) at (-2, -2) {S};
%		\node [style=none] (29) at (-1.5, -1.5) {X};
	\end{pgfonlayer}
	\begin{pgfonlayer}{edgelayer}
		\draw [thick] (7.center) to (2.center);
		\draw [ultra thick, color=white] (4.center) to (5.center);
		\draw [ultra thick, color=white] (8.center) to (11.center);
%		\draw [thick] (7.center) to (6.center);
		\draw [thick] (14.center) to (17.center);
		\draw [ultra thick, color=white] (19.center) to (15.center);
		\draw [ultra thick, color=white] (13.center) to (16.center);
%		\draw [thick] (14.center) to (18.center);
		\draw [thick] (1.center) to (3.center);
		\draw [thick] (10.center) to (9.center);
%		\draw [thick, dashed, color=gray] (0,0) circle [radius=4];
	\end{pgfonlayer}
\end{tikzpicture}\; \; = \; \; q \; \;
\begin{tikzpicture}[baseline=0ex,scale=.5, color=\clr ]
	\begin{pgfonlayer}{nodelayer}
		\node [style=none] (0) at (1, 1) {};
		\node [style=none] (1) at (-3, -1) {};
		\node [style=none] (2) at (-1, 3) {};
		\node [style=none] (3) at (1, 3) {};
		\node [style=none] (4) at (-0.25, 2.25) {};
		\node [style=none] (5) at (0.25, 1.75) {};
		\node [style=none] (6) at (1, 1) {};
		\node [style=none] (7) at (3, -1) {};
		\node [style=none] (8) at (1.75, 0.25) {};
		\node [style=none] (9) at (3, 1) {};
		\node [style=none] (10) at (-1, -3) {};
		\node [style=none] (11) at (2.25, -0.25) {};
		\node [style=none] (12) at (-1, -1) {};
		\node [style=none] (13) at (-0.25, -1.75) {};
		\node [style=none] (14) at (1, -3) {};
		\node [style=none] (15) at (-1.75, -0.25) {};
		\node [style=none] (16) at (0.25, -2.25) {};
		\node [style=none] (17) at (-3, 1) {};
		\node [style=none] (18) at (-1, -1) {};
		\node [style=none] (19) at (-2.25, 0.25) {};
		\node [style=none] (20) at (0, 3) {N};
		\node [style=none] (21) at (0, 3) {N};
		\node [style=none] (22) at (-2, 2) {N};
		\node [style=none] (23) at (2, 2) {N};
		\node [style=none] (24) at (-3, 0) {W};
%		\node [style=none] (25) at (3, 0) {E};
		\node [style=none] (26) at (2, -2) {S};
		\node [style=none] (27) at (0, -3) {S};
		\node [style=none] (28) at (-2, -2) {S};
%		\node [style=none] (29) at (-1.5, -1.5) {X};
		\node [style=none] (94) at (0.25, 2.25) {};
		\node [style=none] (95) at (-0.25, 1.75) {};
	\end{pgfonlayer}
	\begin{pgfonlayer}{edgelayer}
		\draw [thick] (4.center) to (2.center);
		\draw [thick] (5.center) to (7.center);
		\draw [ultra thick, color=white] (8.center) to (11.center);
%		\draw [thick] (7.center) to (6.center);
		\draw [thick] (14.center) to (17.center);
		\draw [ultra thick, color=white] (19.center) to (15.center);
		\draw [ultra thick, color=white] (13.center) to (16.center);
		\draw [thick] (10.center) to (9.center);
		\draw [thick] (1.center) to (95.center);
		\draw [thick] (94.center) to (3.center);
		\draw [thick, looseness=1.5] [out=315,in=45] (4.center) to (95.center);
		\draw [thick, looseness=1.5] [out=135,in=225] (5.center) to (94.center);
		\draw [thick] (94.center) to (3.center);
%		\draw [thick, dashed, color=gray] (0,0) circle [radius=4];
	\end{pgfonlayer}
\end{tikzpicture}\; \; + \; \;
\begin{tikzpicture}[baseline=0ex,scale=.5, color=\clr ]
	\begin{pgfonlayer}{nodelayer}
		\node [style=none] (0) at (1, 1) {};
		\node [style=none] (1) at (-3, -1) {};
		\node [style=none] (2) at (-1, 3) {};
		\node [style=none] (3) at (1, 3) {};
		\node [style=none] (4) at (-0.25, 2.25) {};
		\node [style=none] (5) at (0.25, 1.75) {};
		\node [style=none] (6) at (1, 1) {};
		\node [style=none] (7) at (3, -1) {};
		\node [style=none] (8) at (1.75, 0.25) {};
		\node [style=none] (9) at (3, 1) {};
		\node [style=none] (10) at (-1, -3) {};
		\node [style=none] (11) at (2.25, -0.25) {};
		\node [style=none] (12) at (-1, -1) {};
		\node [style=none] (13) at (-0.25, -1.75) {};
		\node [style=none] (14) at (1, -3) {};
		\node [style=none] (15) at (-1.75, -0.25) {};
		\node [style=none] (16) at (0.25, -2.25) {};
		\node [style=none] (17) at (-3, 1) {};
		\node [style=none] (18) at (-1, -1) {};
		\node [style=none] (19) at (-2.25, 0.25) {};
		\node [style=none] (20) at (0, 3) {N};
		\node [style=none] (21) at (0, 3) {N};
		\node [style=none] (22) at (-2, 2) {N};
		\node [style=none] (23) at (2, 2) {N};
		\node [style=none] (24) at (-3, 0) {W};
%		\node [style=none] (25) at (3, 0) {E};
		\node [style=none] (26) at (2, -2) {S};
		\node [style=none] (27) at (0, -3) {S};
		\node [style=none] (28) at (-2, -2) {S};
%		\node [style=none] (29) at (-1.5, -1.5) {X};
		\node [style=none] (94) at (0.25, 2.25) {};
		\node [style=none] (95) at (-0.25, 1.75) {};
	\end{pgfonlayer}
	\begin{pgfonlayer}{edgelayer}
		\draw [thick] (2.center) to (4.center);
		\draw [thick] (7.center) to (5.center);
		\draw [thick] (94.center) to (3.center);
		\draw [ultra thick, color=white] (8.center) to (11.center);
%		\draw [thick] (7.center) to (6.center);
		\draw [thick] (14.center) to (17.center);
		\draw [ultra thick, color=white] (19.center) to (15.center);
		\draw [ultra thick, color=white] (13.center) to (16.center);
%		\draw [thick] (14.center) to (18.center);
		\draw [thick] (10.center) to (9.center);
		\draw [thick, looseness=1.5] [out=315,in=225] (4.center) to (94.center);
		\draw [thick, looseness=1.5] [out=135,in=45] (5.center) to (95.center);
		\draw [thick] (1.center) to (95.center);
%		\draw [thick, dashed, color=gray] (0,0) circle [radius=4];
	\end{pgfonlayer}
\end{tikzpicture}\; .
\end{equation*} \hspace{\parindent} We again claim the diagrams on the right side of the equality still satisfy conditions $(B1)-(B3)$. The analysis is an easier version of what was done for the triangle. Obviously, no bubbles are created and so $(B1)$ is still satisfied by both diagrams. For the square the only bounded region of $u$ affected by resolving the crossing is the square itself.  In the first diagram on the right side of the equality the square becomes unbounded and in the second diagram the square becomes a triangle.  If any of the regions marked with an S were bounded in $u$, then they were a rectangle and still are rectangles in the new diagrams.  Both diagrams  still satisfy conditions $(B2)$ and $(B3)$.    Thus both of the diagrams on the right hand side have fewer crossings than $u$ and still satisfy conditions $(B1)-(B3)$. 
\end{itemize}
\end{itemize}
 Whichever case is applied, the result replaces $u$ with a $ \Z_{\geq 0}[q]$-linear combination of webs which still satisfy conditions $(B1)-(B3)$ and have strictly fewer crossings.  The claim follows by induction.
\end{proof}

\section{Examples}\label{S:Examples}

\subsection{Example I}\label{SS:ExampleI}

In this example we rewrite $w_{0}T_{4}T_{2}T_{3} = w_{0}\beta_{4}\beta_{2}\beta_{3}$ for $\lambda = (3,3)$ as a linear combination of non-crossing webs using relations \cref{E:crossingrelation,E:bubblerelation}:
\begin{align*}\label{E:Example1}
\begin{tikzpicture}[baseline=0.5cm,scale=.5, color=\clr ]
	\begin{pgfonlayer}{nodelayer}
		\node [style=none] (0) at (1, 0) {};
		\node [style=none] (1) at (2, 3) {};
		\node [style=none] (2) at (4, 3) {};
		\node [style=none] (3) at (4, 0) {};
		\node [style=none] (4) at (-2, 0) {};
		\node [style=none] (5) at (-1, 2) {};
		\node [style=none] (6) at (1, 2) {};
		\node [style=none] (7) at (-4, 0) {};
		\node [style=none] (8) at (-5, 2) {};
		\node [style=none] (9) at (-2, 2) {};
		\node [style=none] (10) at (-4, 2) {};
		\node [style=none] (11) at (2, 0) {};
		\node [style=none] (12) at (3, 0) {};
		\node [style=none] (13) at (-1.75, 1.62) {};
		\node [style=none] (14) at (-1, 1.28) {};
		\node [style=none] (15) at (0.75, 0.77) {};
		\node [style=none] (16) at (1.5, 0.5) {};
		\node [style=none] (17) at (1.25, 1.6) {};
		\node [style=none] (18) at (1.75, 1.2) {};
	\end{pgfonlayer}
	\begin{pgfonlayer}{edgelayer}
		\draw [thick, looseness=1.5] [out=90,in=90]  (1.center) to (2.center);
		\draw [thick]  (2.center) to (3.center);
		\draw [thick]  (7.center) to (10.center);
		\draw [thick, looseness=1.5] [out=90,in=90]  (10.center) to (9.center);
		\draw [thick, looseness=0.5] [out=270,in=90] (9.center) to (11.center);
		\draw [ultra thick, color=white]  (13.center) to (14.center);
		\draw [ultra thick, color=white]  (15.center) to (16.center);
		\draw [thick, looseness=0.5] [out=90,in=270] (4.center) to (5.center);
		\draw [thick, looseness=1.5] [out=90,in=90]  (5.center) to (6.center);
		\draw [thick] [out=270,in=90, looseness=0.5]  (6.center) to (12.center);
		\draw [ultra thick, color=white]  (17.center) to (18.center);
		\draw [thick, looseness=0.5] [out=90, in=270] (0.center) to (1.center);
	\end{pgfonlayer}
\end{tikzpicture}
\; \; &= \; \;
q  \begin{tikzpicture}[baseline=0.5cm,scale=.5, color=\clr ]
	\begin{pgfonlayer}{nodelayer}
		\node [style=none] (0) at (0, 0) {};
		\node [style=none] (1) at (2, 3) {};
		\node [style=none] (2) at (4, 3) {};
		\node [style=none] (3) at (4, 0) {};
		\node [style=none] (4) at (-2, 0) {};
		\node [style=none] (5) at (-1, 2) {};
		\node [style=none] (6) at (1, 2) {};
		\node [style=none] (7) at (-4, 0) {};
		\node [style=none] (8) at (-5, 2) {};
		\node [style=none] (9) at (-2, 2) {};
		\node [style=none] (10) at (-4, 2) {};
		\node [style=none] (11) at (1, 0) {};
		\node [style=none] (12) at (2, 0) {};
		\node [style=none] (13) at (-1.75, 1.62) {};
		\node [style=none] (14) at (-1, 1.23) {};
		\node [style=none] (15) at (-0.15, 0.85) {};
		\node [style=none] (16) at (0.6, 0.5) {};
		\node [style=none] (17) at (1.25, 1.6) {};
		\node [style=none] (18) at (1.75, 1.2) {};
	\end{pgfonlayer}
	\begin{pgfonlayer}{edgelayer}
		\draw [thick, looseness=1.5] [out=90,in=90]  (12.center) to (3.center);
%		\draw [thick]  (2.center) to (3.center);
		\draw [thick]  (7.center) to (10.center);
		\draw [thick, looseness=1.5] [out=90,in=90]  (10.center) to (9.center);
		\draw [thick, looseness=0.5] [out=270,in=90] (9.center) to (11.center);
		\draw [ultra thick, color=white]  (13.center) to (14.center);
		\draw [ultra thick, color=white]  (15.center) to (16.center);
		\draw [thick, looseness=0.5] [out=90,in=270] (4.center) to (5.center);
		\draw [thick, looseness=1.5] [out=90,in=90]  (5.center) to (6.center);
		\draw [thick, looseness=0.5] [out=270,in=90]  (6.center) to (0.center);
		\draw [ultra thick, color=white]  (17.center) to (18.center);
%		\draw [thick, looseness=0.5] [out=90, in=270] (0.center) to (1.center);
	\end{pgfonlayer}
\end{tikzpicture}
\; \; + \; \;
 \begin{tikzpicture}[baseline=0.5cm,scale=.5, color=\clr ]
	\begin{pgfonlayer}{nodelayer}
		\node [style=none] (0) at (0, 0) {};
		\node [style=none] (1) at (2, 2) {};
		\node [style=none] (2) at (4, 2) {};
		\node [style=none] (3) at (4, 0) {};
		\node [style=none] (4) at (-2, 0) {};
		\node [style=none] (5) at (-1, 2) {};
		\node [style=none] (6) at (1, 2) {};
		\node [style=none] (7) at (-4, 0) {};
%		\node [style=none] (8) at (-5, 2) {8};
		\node [style=none] (9) at (-2, 2) {};
		\node [style=none] (10) at (-4, 2) {};
		\node [style=none] (11) at (1, 0) {};
		\node [style=none] (12) at (2, 0) {};
		\node [style=none] (13) at (-1.75, 1.62) {};
		\node [style=none] (14) at (-1, 1.23) {};
		\node [style=none] (15) at (-0.15, 0.85) {};
		\node [style=none] (16) at (0.6, 0.5) {};
		\node [style=none] (17) at (1.25, 1.6) {};
		\node [style=none] (18) at (1.75, 1.2) {};
	\end{pgfonlayer}
	\begin{pgfonlayer}{edgelayer}
		\draw [thick, looseness=1.5] [out=90,in=90]  (6.center) to (1.center);
		\draw [thick]  (2.center) to (3.center);
		\draw [thick]  (1.center) to (12.center);
		\draw [thick]  (7.center) to (10.center);
		\draw [thick, looseness=1.5] [out=90,in=90]  (10.center) to (9.center);
		\draw [thick, looseness=0.5] [out=270,in=90] (9.center) to (11.center);
		\draw [ultra thick, color=white]  (13.center) to (14.center);
		\draw [ultra thick, color=white]  (15.center) to (16.center);
		\draw [thick, looseness=0.5] [out=90,in=270] (4.center) to (5.center);
		\draw [thick, looseness=1.5] [out=90,in=90]  (5.center) to (2.center);
		\draw [thick, looseness=0.5] [out=270,in=90]  (6.center) to (0.center);
		\draw [ultra thick, color=white]  (17.center) to (18.center);
%		\draw [thick, looseness=0.5] [out=90, in=270] (0.center) to (1.center);
	\end{pgfonlayer}
\end{tikzpicture} \\
\; \; &= \; \;
q^{2}  \begin{tikzpicture}[baseline=0.5cm,scale=.5, color=\clr ]
	\begin{pgfonlayer}{nodelayer}
		\node [style=none] (0) at (-1, 0) {};
		\node [style=none] (1) at (1, 2) {};
		\node [style=none] (2) at (-1, 2) {};
		\node [style=none] (3) at (1, 0) {};
		\node [style=none] (4) at (-2, 0) {};
		\node [style=none] (5) at (-4, 0) {};
		\node [style=none] (6) at (2, 0) {};
		\node [style=none] (7) at (4, 0) {};
		\node [style=none] (8) at (-0.25, 1.1) {};
		\node [style=none] (9) at (0.25, 0.9) {};
	\end{pgfonlayer}
	\begin{pgfonlayer}{edgelayer}
		\draw [thick] [out=270, in=90] (2.center) to (3.center);
		\draw [ultra thick, color=white] (8.center) to (9.center);
		\draw [thick] [out=90, in=270] (0.center) to (1.center);
		\draw [thick, looseness=1.5] [out=90,in=90] (1.center) to (2.center);
		\draw [thick, looseness=1.5] [out=90,in=90] (4.center) to (5.center);
		\draw [thick, looseness=1.5] [out=90,in=90] (7.center) to (6.center);
	\end{pgfonlayer}
\end{tikzpicture}
\; \; + \; \;
q \;  \begin{tikzpicture}[baseline=0.5cm,scale=.5, color=\clr ]
	\begin{pgfonlayer}{nodelayer}
		\node [style=none] (0) at (0, 0) {};
		\node [style=none] (1) at (2, 3) {};
		\node [style=none] (2) at (4, 3) {};
		\node [style=none] (3) at (4, 0) {};
		\node [style=none] (4) at (-2, 0) {};
		\node [style=none] (5) at (-1, 2) {};
		\node [style=none] (6) at (1, 2) {};
		\node [style=none] (7) at (-3, 0) {};
%		\node [style=none] (8) at (-5, 2) {};
		\node [style=none] (9) at (-2, 2) {};
		\node [style=none] (10) at (-3, 2) {};
		\node [style=none] (11) at (1, 0) {};
		\node [style=none] (12) at (2, 0) {};
		\node [style=none] (13) at (-1.75, 1.62) {};
		\node [style=none] (14) at (-1, 1.23) {};
		\node [style=none] (15) at (0.25, 1.2) {};
		\node [style=none] (16) at (0.75, 0.76) {};
		\node [style=none] (17) at (1.25, 1.6) {};
		\node [style=none] (18) at (1.75, 1.2) {};
	\end{pgfonlayer}
	\begin{pgfonlayer}{edgelayer}
		\draw [thick, looseness=1.5] [out=90,in=90]  (12.center) to (3.center);
%		\draw [thick]  (2.center) to (3.center);
		\draw [thick]  (7.center) to (10.center);
		\draw [thick, looseness=1.25] [out=90,in=90]  (10.center) to (6.center);
%		\draw [thick, looseness=0.5] [out=270,in=90] (9.center) to (11.center);
%		\draw [ultra thick, color=white]  (13.center) to (14.center);
		\draw [thick, looseness=1.5] [out=90,in=90] (4.center) to (11.center);
		\draw [ultra thick, color=white]  (15.center) to (16.center);
%		\draw [thick, looseness=1.5] [out=90,in=90]  (5.center) to (6.center);
		\draw [thick] [out=270,in=90]  (6.center) to (0.center);
		\draw [ultra thick, color=white]  (17.center) to (18.center);
%		\draw [thick, looseness=0.5] [out=90, in=270] (0.center) to (1.center);
	\end{pgfonlayer}
\end{tikzpicture} \\
& \hspace{.5in} +
q \;  \begin{tikzpicture}[baseline=0.5cm,xscale=-1,scale=.5, color=\clr ]
	\begin{pgfonlayer}{nodelayer}
		\node [style=none] (0) at (0, 0) {};
		\node [style=none] (1) at (2, 3) {};
		\node [style=none] (2) at (4, 3) {};
		\node [style=none] (3) at (4, 0) {};
		\node [style=none] (4) at (-2, 0) {};
		\node [style=none] (5) at (-1, 2) {};
		\node [style=none] (6) at (1, 2) {};
		\node [style=none] (7) at (-3, 0) {};
%		\node [style=none] (8) at (-5, 2) {};
		\node [style=none] (9) at (-2, 2) {};
		\node [style=none] (10) at (-3, 2) {};
		\node [style=none] (11) at (1, 0) {};
		\node [style=none] (12) at (2, 0) {};
		\node [style=none] (13) at (-1.75, 1.62) {};
		\node [style=none] (14) at (-1, 1.23) {};
		\node [style=none] (15) at (0.25, 0.72) {};
		\node [style=none] (16) at (0.75, 1.28) {};
		\node [style=none] (17) at (1.25, 1.6) {};
		\node [style=none] (18) at (1.75, 1.2) {};
	\end{pgfonlayer}
	\begin{pgfonlayer}{edgelayer}
		\draw [thick, looseness=1.5] [out=90,in=90]  (12.center) to (3.center);
%		\draw [thick]  (2.center) to (3.center);
		\draw [thick]  (7.center) to (10.center);
		\draw [thick, looseness=1.25] [out=90,in=90]  (10.center) to (6.center);
%		\draw [thick, looseness=0.5] [out=270,in=90] (9.center) to (11.center);
%		\draw [ultra thick, color=white]  (13.center) to (14.center);
		\draw [thick] [out=270,in=90]  (6.center) to (0.center);
		\draw [ultra thick, color=white]  (15.center) to (16.center);
		\draw [thick, looseness=1.5] [out=90,in=90] (4.center) to (11.center);
%		\draw [thick, looseness=1.5] [out=90,in=90]  (5.center) to (6.center);
		\draw [ultra thick, color=white]  (17.center) to (18.center);
%		\draw [thick, looseness=0.5] [out=90, in=270] (0.center) to (1.center);
	\end{pgfonlayer}
\end{tikzpicture}
\;\; + \; \;
\begin{tikzpicture}[baseline=0.5cm,scale=.5, color=\clr ]
	\begin{pgfonlayer}{nodelayer}
		\node [style=none] (0) at (-0.5, 0) {};
		\node [style=none] (3) at (0.5, 0) {};
		\node [style=none] (4) at (-2.5, 0) {};
		\node [style=none] (5) at (-4, 0) {};
		\node [style=none] (6) at (2.5, 0) {};
		\node [style=none] (7) at (4, 0) {};
		\node [style=none] (8) at (-0.25, 1.15) {};
		\node [style=none] (9) at (0.25, 0.8) {};
	\end{pgfonlayer}
	\begin{pgfonlayer}{edgelayer}
		\draw [thick, looseness=1.5] [out=90,in=90] (5.center) to (7.center);
		\draw [thick, looseness=1.5] [out=90,in=90] (4.center) to (3.center);
		\draw [ultra thick, color=white] (8.center) to (9.center);
		\draw [thick, looseness=1.5] [out=90,in=90] (0.center) to (6.center);
	\end{pgfonlayer}
\end{tikzpicture} \\
&=
(q^{3}-q^{2}[2]_{q} + 2q ) \; \begin{tikzpicture}[baseline=0cm,scale=.5, color=\clr ]
	\begin{pgfonlayer}{nodelayer}
		\node [style=none] (0) at (-1.5, 0) {};
		\node [style=none] (3) at (0.5, 0) {};
		\node [style=none] (4) at (-2.5, 0) {};
		\node [style=none] (5) at (-4.5, 0) {};
		\node [style=none] (6) at (1.5, 0) {};
		\node [style=none] (7) at (3.5, 0) {};
%		\node [style=none] (8) at (-0.25, 1.15) {8};
%		\node [style=none] (9) at (0.25, 0.8) {9};
	\end{pgfonlayer}
	\begin{pgfonlayer}{edgelayer}
		\draw [thick, looseness=1.75] [out=90,in=90] (5.center) to (4.center);
		\draw [thick, looseness=1.75] [out=90,in=90] (0.center) to (3.center);
%		\draw [ultra thick, color=white] (8.center) to (9.center);
		\draw [thick, looseness=1.75] [out=90,in=90] (6.center) to (7.center);
	\end{pgfonlayer}
\end{tikzpicture} \\
& \hspace{0.5in} + \; \;
q^{2}\; \begin{tikzpicture}[baseline=0.25cm,scale=.5, color=\clr ]
	\begin{pgfonlayer}{nodelayer}
		\node [style=none] (0) at (-1.5, 0) {};
		\node [style=none] (3) at (0, 0) {};
		\node [style=none] (4) at (-2.5, 0) {};
		\node [style=none] (5) at (-4, 0) {};
		\node [style=none] (6) at (1, 0) {};
		\node [style=none] (7) at (3, 0) {};
%		\node [style=none] (8) at (-0.25, 1.15) {8};
%		\node [style=none] (9) at (0.25, 0.8) {9};
	\end{pgfonlayer}
	\begin{pgfonlayer}{edgelayer}
		\draw [thick, looseness=1.25] [out=90,in=90] (5.center) to (3.center);
		\draw [thick, looseness=1.5] [out=90,in=90] (0.center) to (4.center);
%		\draw [ultra thick, color=white] (8.center) to (9.center);
		\draw [thick, looseness=1.5] [out=90,in=90] (6.center) to (7.center);
	\end{pgfonlayer}
\end{tikzpicture}
\; \;  + \; \; 
q^{2}\; \begin{tikzpicture}[baseline=0.25cm,xscale=-1,scale=.5, color=\clr ]
	\begin{pgfonlayer}{nodelayer}
		\node [style=none] (0) at (-1.5, 0) {};
		\node [style=none] (3) at (0, 0) {};
		\node [style=none] (4) at (-2.5, 0) {};
		\node [style=none] (5) at (-4, 0) {};
		\node [style=none] (6) at (1, 0) {};
		\node [style=none] (7) at (3, 0) {};
%		\node [style=none] (8) at (-0.25, 1.15) {8};
%		\node [style=none] (9) at (0.25, 0.8) {9};
	\end{pgfonlayer}
	\begin{pgfonlayer}{edgelayer}
		\draw [thick, looseness=1.25] [out=90,in=90] (5.center) to (3.center);
		\draw [thick, looseness=1.5] [out=90,in=90] (0.center) to (4.center);
%		\draw [ultra thick, color=white] (8.center) to (9.center);
		\draw [thick, looseness=1.5] [out=90,in=90] (6.center) to (7.center);
	\end{pgfonlayer}
\end{tikzpicture} \\
& \hspace{0.5in} + 
q\; \begin{tikzpicture}[baseline=0.25cm,xscale=-1,scale=.5, color=\clr ]
	\begin{pgfonlayer}{nodelayer}
		\node [style=none] (0) at (-1.5, 0) {};
		\node [style=none] (3) at (0.5, 0) {};
		\node [style=none] (4) at (-2.5, 0) {};
		\node [style=none] (5) at (-4, 0) {};
		\node [style=none] (6) at (1.5, 0) {};
		\node [style=none] (7) at (3, 0) {};
%		\node [style=none] (8) at (-0.25, 1.15) {8};
%		\node [style=none] (9) at (0.25, 0.8) {9};
	\end{pgfonlayer}
	\begin{pgfonlayer}{edgelayer}
		\draw [thick, looseness=1] [out=90,in=90] (5.center) to (7.center);
		\draw [thick, looseness=2] [out=90,in=90] (0.center) to (4.center);
%		\draw [ultra thick, color=white] (8.center) to (9.center);
		\draw [thick, looseness=2] [out=90,in=90] (6.center) to (3.center);
	\end{pgfonlayer}
\end{tikzpicture}
\; \; + \; \;
\begin{tikzpicture}[baseline=0.25cm,xscale=-1,scale=.5, color=\clr ]
	\begin{pgfonlayer}{nodelayer}
		\node [style=none] (0) at (-1.5, 0) {};
		\node [style=none] (3) at (0.5, 0) {};
		\node [style=none] (4) at (-2.5, 0) {};
		\node [style=none] (5) at (-4, 0) {};
		\node [style=none] (6) at (1.5, 0) {};
		\node [style=none] (7) at (3, 0) {};
%		\node [style=none] (8) at (-0.25, 1.15) {8};
%		\node [style=none] (9) at (0.25, 0.8) {9};
	\end{pgfonlayer}
	\begin{pgfonlayer}{edgelayer}
		\draw [thick, looseness=1] [out=90,in=90] (5.center) to (7.center);
		\draw [thick, looseness=1.25] [out=90,in=90] (6.center) to (4.center);
%		\draw [ultra thick, color=white] (8.center) to (9.center);
		\draw [thick, looseness=1.5] [out=90,in=90] (0.center) to (3.center);
	\end{pgfonlayer}
\end{tikzpicture} \; .
\end{align*} Observe that 
\[
q^{3} - q^{2}[2]_{q}+2q = q^{3} - q^{2}(q+q^{-1})+2q = q.
\]  Thus all the coefficients given above lie in $\Z_{\geq 0}[q]$ as promised by \cref{T:positivity}.  Also note that the bubble could have been avoided by the use of one of the Reidemeister II moves given in \cref{L:RII}.

\subsection{Example II}\label{SS:ExampleII}
Set $\lambda = (3,3)$.  $t_{0} = t^{\lambda^{\TT}}$, $t_{1}=t_{0}.s_{2}$, $t_{2}=t_{0}.s_{4}$, $t_{3}=t_{0}.s_{2}s_{4}$, and $t_{4}=t_{0}.s_{4}s_{2}s_{3}$.  For $i=0,\dotsc , 4$, let $w_{i}=\phi (t_{i})$.  With respect to the ordered bases $\left\{v_{t_{i}} \right\}_{i=0}^{4}$ and $\left\{w_{i} \right\}_{i=0}^{4}$, the matrix for the isomorphism $\varphi: S^{(3,3)} \to W^{(3,3)}$ is given by

\[ \left(
\begin{matrix}
1 & q & q & q^{2} & q \\
0 & 1 & 0 & q & q^{2} \\
0 & 0 & 1 & q & q^{2} \\
0 & 0 & 0 & 1 & q \\
0 & 0 & 0 & 0 & 1 \\
\end{matrix}  \right) \; .
\]  The calculation in \cref{SS:ExampleI} verifies the last column of this matrix.

\subsection{Example III}\label{SS:ExampleIII}
Set $\lambda = (4,2)$.  $t_{0} = t^{\lambda^{\TT}}$, $t_{1}=t_{0}.s_{2}$, $t_{2}=t_{0}.s_{4}$, $t_{3}=t_{0}.s_{2}s_{4}$, $t_{4}=t_{0}.s_{4}s_{5}$,  $t_{5}=t_{0}.s_{4}s_{5}s_{2}$,  $t_{6}=t_{0}.s_{4}s_{2}s_{3}$,  $t_{7}=t_{0}.s_{4}s_{2}s_{3}s_{5}$, and  $t_{8}=t_{0}.s_{4}s_{2}s_{3}s_{5}s_{4}$.  For $i=0,\dotsc, 8 $, let $w_{i}=\phi (t_{i})$.  With respect to the ordered bases $\left\{v_{t_{i}} \right\}_{i=0}^{8}$ and $\left\{w_{i} \right\}_{i=0}^{8}$, the matrix for the isomorphism $\varphi: S^{(4,2)} \to W^{(4,2)}$ is given by

\[ \left(
\begin{matrix}
%t_{0} & t_{1} & t_{2} & t_{3} & t_{4} & t_{5} & t_{6} & t_{7} & t_{8} \\
1 & q & q & q^{2} & q^{2} & q^{3} & q & q^{2} & q^{3} \\
0 & 1 & 0 & q & 0 & q^{2} & q^{2} & q^{3} & q^{4} \\
0 & 0 & 1 & q & q & q^{2} & q^{2} & q^{3} & q^{2} \\
0 & 0 & 0 & 1 & 0 & q & q & q^{2} & q^{3}+q \\
0 & 0 & 0 & 0 & 1 & q & 0 & q^{2} & q^{3} \\
0 & 0 & 0 & 0 & 0 & 1 & 0 & q & q^{2} \\
0 & 0 & 0 & 0 & 0 & 0 & 1 & q & q^{2} \\
0 & 0 & 0 & 0 & 0 & 0 & 0 & 1 & q \\
0 & 0 & 0 & 0 & 0 & 0 & 0 & 0 & 1 \\
\end{matrix}  \right) \; .
\]

\bibliographystyle{eprintamsplain}
\bibliography{Biblio}

\end{document}